\documentclass[a4paper,12pt]{article}
\usepackage{ucs}
\usepackage{amsfonts, amssymb, amsmath, amsthm, amscd}
\usepackage{graphicx}
\usepackage{cite}
\textheight=220mm \textwidth=150mm \topmargin=-0.5in
\oddsidemargin=0in \evensidemargin=0in

\ifx\undefined \pdfgentounicode \else
\input{glyphtounicode} \pdfgentounicode=1
\fi

\author{A.A. Vasil'eva\footnote{Faculty of Mechanics and Mathematics, Lomonosov Moscow State University; Moscow Center for Fundamental and Applied Mathematics.}}
\title{Kolmogorov widths of anisotropic Sobolev classes}
\date{}
\begin{document}

\maketitle

\newenvironment{Biblio}{%
                  \renewcommand{\refname}{\footnotesize REFERENCES}%
                  }

\def\inff{\mathop{\smash\inf\vphantom\sup}}
\renewcommand{\le}{\leqslant}
\renewcommand{\ge}{\geqslant}
\newcommand{\sgn}{\mathrm {sgn}\,}
\newcommand{\inter}{\mathrm {int}\,}
\newcommand{\dist}{\mathrm {dist}}
\newcommand{\supp}{\mathrm {supp}\,}
\newcommand{\R}{\mathbb{R}}
\newcommand{\Z}{\mathbb{Z}}
\newcommand{\N}{\mathbb{N}}
\newcommand{\Q}{\mathbb{Q}}
\theoremstyle{plain}
\newtheorem{Trm}{Theorem}
\newtheorem{trma}{Theorem}

\newtheorem{Def}{Definition}
\newtheorem{Cor}{Corollary}
\newtheorem{Lem}{Lemma}
\newtheorem{Rem}{Remark}
\newtheorem{Sta}{Proposition}
\newtheorem{Exa}{Example}
\renewcommand{\proofname}{\bf Proof}
\renewcommand{\thetrma}{\Alph{trma}}

\begin{abstract}
In this article, order estimates for the Kolmogorov widths of periodic anisotropic Sobolev and Nikol'skii classes are obtained, as well as order estimates for the Kolmogorov widths of anisotropic finite-dimensional balls.
\end{abstract}

\section{Introduction}

In this paper, we obtain order estimates for the Kolmogorov widths of anisotropic periodic Sobolev and Nikol'skii classes $W^{\overline{r}}_{\overline{p}, \overline{\alpha}}(\mathbb{T}^d)$ and $H^{\overline{r}}_{\overline{p}}(\mathbb{T}^d)$ in the anisotropic space $L_{\overline{q}}(\mathbb{T}^d)$ (see definitions below). Here $\mathbb{T}= [0, \, 2\pi]$, $\overline{p} = (p_1, \, \dots, \,  p_d)$, $\overline{q} = (q_1, \, \dots, \,  q_d)$, $\overline{r} = (r_1, \, \dots, \,  r_d)$, $\overline{\alpha} = (\alpha_1, \, \dots, \,  \alpha_d)$, $r_j>0$, $\alpha_j\in \R$. The following cases will be considered: 1) $1\le p_j\le \infty$, $2\le q_j<\infty$, $1\le j\le d$; 2) $1\le p_j\le q_j\le 2$ for $1\le j\le \nu$, $p_j\ge q_j$ for $\nu+1\le j\le d$, where $\nu\in \{0, \, 1, \, \dots, \, d\}$. Hence we generalize well-known results to anisotropic $\overline{p}$ and $\overline{q}$ under the conditions on the parameters mentioned above. In addition, we obtain order estimates for the Kolmogorov $n$-widths of finite-dimensional balls $B_{p_1, \, \dots, \, p_d}^{k_1, \, \dots, \, k_d}$ in $l_{q_1, \, \dots, \, q_d}^{k_1, \, \dots, \, k_d}$ for $1\le p_j\le \infty$, $2\le q_j<\infty$, $1\le j\le d$, $n\le \frac{k_1\dots k_d}{2}$. For $d=1$, this result was obtained by Gluskin \cite{bib_gluskin}, and for $d=2$, it was obtained in \cite{vas_besov}.

We need some notation.

Let $1\le p_1,\, \dots, \, p_d\le\infty$, $\overline{p} = (p_1, \, \dots, \,  p_d)$, $(X_1, \, \mu_1), \, \dots, \, (X_d, \, \mu_d)$ be measure spaces. The anisotropic norm $\|\cdot \|_{L_{\overline{p}}(X_1\times \dots \times X_d, \, \mu_1\otimes \dots \otimes\mu_d)}$ is defined by induction on $d$: for $d=1$, it is the standard norm in the space $L_{p_1}(X_1, \, \mu_1)$, and for $d\ge 2$, the recurrence relation is as follows:
\begin{align}
\label{anis_norm_def}
\begin{array}{c}
\|f\| _{L_{\overline{p}}(X_1\times \dots \times X_d, \, \mu_1\otimes \dots \otimes\mu_d)} = \|g_f\|_{L_{p_d}(X_d, \, \mu_d)}, \\
g_f(x_d) = \|f(\cdot, \, x_d)\|_{L_{(p_1, \, \dots, \, p_{d-1})}(X_1\times \dots \times X_{d-1,} \, \mu_1\otimes \dots \otimes\mu_{d-1})};
\end{array}
\end{align}
here $f$ is a measurable function on $(X_1\times \dots \times X_d, \, \mu_1\otimes \dots \otimes\mu_d)$.
In particular, for finite $p_1, \, \dots, \, p_d$,
$$
\|f\|_{L_{\overline{p}}(X_1\times \dots \times X_d, \, \mu_1\otimes \dots \otimes\mu_d)} = $$$$=\Bigl(\int \limits _{X_d}\dots\Bigl(\int \limits _{X_2}\Bigl( \int \limits _{X_1}|f(x_1, \, x_2, \, \dots, \, x_d)|^{p_1}\, d\mu_1\Bigr)^{p_2/p_1}\, d\mu_2\Bigr)^{p_3/p_2}\dots \, d\mu_d\Bigr)^{1/p_d}.
$$
The space $L_{\overline{p}}(X_1\times \dots \times X_d, \, \mu_1\otimes \dots \otimes\mu_d)$ consists of equivalence classes of measurable functions such that $\|f\|_{L_{\overline{p}}(X_1\times \dots \times X_d, \, \mu_1\otimes \dots \otimes\mu_d)}<\infty$.

We denote by $\mathbb{T}$ the interval $[0, \, 2\pi]$ with ``glued'' endpoints, equipped with the normalized Lebesgue measure (i.e., the measure of $[a, \, b]\subset \mathbb{T}$ is $(b-a)/2\pi$). 
If we take as $(X_j, \, \mu_j)$ the set $\mathbb{T}$ with normalized Lebesgue measure for each $j=1, \, \dots, \, d$, then the corresponding anisotropic norm will be denoted by $\|\cdot\|_{L_{\overline{p}}(\mathbb{T}^d)}$.

More information about the spaces with anisotropic norm can be found, e.g., in \cite{bed_pan, gal_pan}.

Let us define the anisotropic Sobolev and Nikol'skii spaces according to \cite{teml_book}.

Let $r>0$, $\alpha \in \R$. The Bernoulli kernel $F_r(\cdot, \, \alpha)$ is defined as follows:
\begin{align}
\label{frxa}
F_r(x, \, \alpha) = 1 + 2\sum \limits _{k=1}^\infty k^{-r}\cos (kx - \alpha \pi/2), \quad x \in \mathbb{R}.
\end{align}
It is well-known \cite[Theorem 1.4.1]{teml_book} that $F_r(\cdot, \, \alpha) \in L_1(\mathbb{T})$, and the series \eqref{frxa} converges to $F_r(\cdot, \, \alpha)$ in $L_1(\mathbb{T})$.

Let $\overline{r} = (r_1, \, \dots, \, r_d)$, $\overline{\alpha} = (\alpha_1, \, \dots, \, \alpha_d)$, $\overline{p}=(p_1, \, \dots, \, p_d)$, $r_j>0$, $\alpha_j\in \R$, $1\le p_j\le \infty$, $j=1, \, \dots, \, d$. 

\begin{Def}
The class $W^{\overline{r}}_{\overline{p}, \overline{\alpha}}(\mathbb{T}^d)$ consists of functions $f$ on $\mathbb{T}^d$ such that for all $j\in \{1, \, \dots, \, d\}$ the integral representation  
\begin{align}
\label{oooo}
f(x_1, \, \dots, \, x_d) = \frac{1}{2\pi} \int \limits _{\mathbb{T}} \varphi_j(x_1, \, \dots, \, x_{j-1}, \, y, \, x_{j+1}, \, \dots, \, x_d) F_{r_j}(x_j-y, \, \alpha_j)\, dy
\end{align}
holds with $\|\varphi_j\|_{L_{\overline{p}}(\mathbb{T}^d)}\le 1$.
\end{Def}

Let $h\in \R$, $f\in L_{\overline{p}}(\mathbb{T}^d)$. We continue $f$ periodically to $\R^d$ and set $$\Delta_h^{1,j} f(x_1, \, \dots, \, x_d) = f(x_1, \, \dots, \, x_{j-1}, \, x_j+h, \, x_{j+1}, \, \dots, \, x_d) - $$$$-f(x_1, \, \dots, \, x_{j-1}, \, x_j, \, x_{j+1}, \, \dots, \, x_d).$$ For $l\in \N$, $l\ge 2$, the operator $\Delta_h^{l,j}$ is defined by the equation $\Delta_h^{l,j} = \Delta_h^{1,j}\circ \Delta_h^{l-1,j}$.

\begin{Def}
The Nikol’skii class $H^{\overline{r}}_{\overline{p}}(\mathbb{T}^d)$ consists of functions $f\in L_{\overline{p}}(\mathbb{T}^d)$ such that
$$
\|f\|_{L_{\overline{p}}(\mathbb{T}^d)}\le 1, \quad \|\Delta^{l_j,j}_hf\|_{L_{\overline{p}}(\mathbb{T}^d)} \le |h|^{r_j}, \quad h\in \R, \quad 1\le j\le d,
$$
where $l_j =\lfloor r_j\rfloor +1$.
\end{Def}

It is well-known \cite[Theorem 3.4.6]{teml_book} that
\begin{align}
\label{emb_w_h} W^{\overline{r}}_{\overline{p}, \overline{\alpha}}(\mathbb{T}^d) \subset C(\overline{r}) H^{\overline{r}}_{\overline{p}}(\mathbb{T}^d),
\end{align}
where $C(\overline{r})$ is a positive number depending only on $\overline{r}$.

Recall the definition of the Kolmogorov widths (for details, see \cite{kniga_pinkusa, alimov_tsarkov, nvtp, teml_book, itogi_nt}).
\begin{Def}
Let $X$ be a normed space, and let $M\subset X$, $n\in
\Z_+$. The Kolmogorov $n$-width of $M$ in $X$ is defined by
$$
d_n(M, \, X) = \inf _{L\in {\cal L}_n(X)} \sup _{x\in M} \inf
_{y\in L} \|x-y\|;
$$
here ${\cal L}_n(X)$ is the family of all subspaces in $X$
of dimension at most $n$.
\end{Def}

In 1960--80's, the problem of estimating the Kolmogorov widths of the balls $B_p^N$ in $l_q^N$ (see notation below) and Sobolev classes in $L_q$ was studied \cite{pietsch1, stesin, kashin_oct, bib_kashin, gluskin1, bib_gluskin, garn_glus, itogi_nt, bib_ismag, vmt60, kashin_sma}. For periodic Sobolev classes defined by restrictions on one or several partial derivatives, as well as for Nikol'skii classes, this problem was studied by Temlyakov \cite{teml1, teml2, teml3, teml4, teml5} and Galeev \cite{galeev85, galeev87}; see also \cite{galeev4, itogi_nt, teml_book}. In \cite{galeev85}, anisotropic norms were considered, but only for the following cases: 
\begin{align}
\label{emg85cases}
\begin{array}{c}
1)\; 1<q_j\le p_j<\infty, \; 1\le j\le d, \; 2)\; 1<p_j\le q_j\le 2, \; 1\le j\le d, \\ 3)\; 2\le p_j\le q_j<\infty, \; 1\le j\le d, \\ 4)\; 1<p_j\le 2\le q_j<\infty,\; 1\le j\le d, \; p_1=\dots=p_d.
\end{array}
\end{align}
(Actually, from the
proof we can see that $p_1=\dots=p_d$, $q_1=\dots=q_d$ in case 3), i.e., the norms are isotropic.)
In cases 3), 4), the estimates of the widths were obtained only for the ``large smoothness''. The case of ``small smoothness'' was studied in \cite{kashin_sma} for univariate functions and in \cite{galeev87} for multivariate functions and isotropic norms. 

Notice that in \cite{galeev85, galeev87, galeev4} the estimates for the widths were obtained for $1<p_j<\infty$, $1<q_j<\infty$; the discretization was based on Littlewood--Paley theorem and Marcinkiewicz multiplier theorem \cite{besov_iljin_nik, nikolski_sm, besov_littlewood}, as well as on Marcinkiewicz--Zygmund theorem (see \cite[Chapter X, Theorem 7.5]{zigmund} for the univariate case, and \cite[Theorem 1.2]{galeev85}, for the multivariate case). In \cite{teml3, teml4, teml_book}, an alternative approach was applied, and the estimates for the widths were obtained for $1\le p, \, q\le \infty$ (here $p_1=\dots=p_d=p$, $q_1=\dots=q_d=q$); the main idea was to replace the Dirichlet kernels by de la Vall\'ee Poussin kernels.

In \cite{galeev1, galeev2}, estimates for the widths of intersections of Sobolev classes in $L_q(\mathbb{T})$ were obtained (for $q>2$, only the case of ``large smoothness'' was considered); this result was generalized to the case of ``small smoothness'' in \cite{vas_int_sob}.

In \cite{akishev, akishev1, akishev2, akishev3}, the problem of estimating the widths and the best $M$-term approximations of anisotropic Besov--Nikol'skii--Amanov classes in an anisotropic Lorentz space was studied; the parameters $p_j$, $q_j$ in \cite{akishev, akishev3} satisfied \eqref{emg85cases}.

In this paper, we obtain order estimates for the widths $d_n(W^{\overline{r}}_{\overline{p},\overline{\alpha}}(\mathbb{T}^d), \, L_{\overline{q}}(\mathbb{T}^d))$ and $d_n(H^{\overline{r}}_{\overline{p}}(\mathbb{T}^d), \, L_{\overline{q}}(\mathbb{T}^d))$. The following cases will be considered: 1) $2\le q_j<\infty$, $1\le p_j\le \infty$, $1\le j\le d$; 2) $1\le p_j\le q_j\le 2$, $1\le j\le \nu$, $p_j\ge q_j$, $\nu+1\le j\le d$.

We need some more notation.

Given $2\le q<\infty$, $1\le p\le \infty$, we set 
\begin{align}
\label{om_pq}
\omega_{p,q} = \begin{cases} 0 & \text{for }p> q, \\ \frac{1/p-1/q}{1/2-1/q} & \text{for }2<p\le q, \\ 1 & \text{for }1\le p\le 2.\end{cases} 
\end{align}

Let $I\subset \{1, \, \dots, \, d\}$ be a non-empty set, $\overline{p}=(p_1, \, \dots, \, p_d)$. We define the number $\langle \overline{p}\rangle_{I}$ by
\begin{align}
\label{sredn_1}
\frac{1}{\langle \overline{p}\rangle_{I}} = \frac{1}{|I|} \sum \limits _{j\in I} \frac{1}{p_j}.
\end{align}
We write 
\begin{align}
\label{sredn_2}
\langle \overline{p}\rangle:= \langle \overline{p}\rangle_{\{1, \, \dots, \, d\}}.
\end{align}
For $I=\varnothing$, we set $\langle \overline{p}\rangle_{I}=1$.

Let $\sigma$ be a permutation of $d$ elements such that \begin{align}
\label{upor} \omega_{p_{\sigma(1)},q_{\sigma(1)}} \le \omega_{p_{\sigma(2)},q_{\sigma(2)}}\le \dots \le \omega_{p_{\sigma(d)},q_{\sigma(d)}}.
\end{align}
The numbers $\mu$, $\nu\in \{0, \, \dots, \, d\}$ are defined by 
\begin{align}
\label{mu_nu_def} \{1, \, \dots, \, \mu\} = \{j:\; \omega_{p_{\sigma(j)},q_{\sigma(j)}} = 0\}, \; \{1, \, \dots, \, \nu\} = \{j:\; \omega_{p_{\sigma(j)},q_{\sigma(j)}}<1\}.
\end{align}
From \eqref{om_pq} it follows that the condition $j\in \{1, \, \dots, \, \mu\}$ is equivalent to $p_{\sigma(j)}\ge q_{\sigma(j)}$ for $q_{\sigma(j)}>2$, and $p_{\sigma(j)}> q_{\sigma(j)}$, for $q_{\sigma(j)}=2$; the condition $j\in \{1, \, \dots, \, \nu\}$ is equivalent to $p_{\sigma(j)}> 2$.

We set $$I(t, \, s) = \{\sigma(t), \, \sigma(t+1),\, \dots, \, \sigma(s-1), \, \sigma(s)\}, \quad 1\le t\le s\le d.$$

Given $\overline{a}=(a_1, \, \dots, \, a_d)$, $\overline{b}=(b_1, \, \dots, \, b_d)$, we write $\overline{a}\circ \overline{b}=(a_1b_1, \, \dots, \, a_db_d)$.

Let $X$, $Y$ be sets, $f_1$, $f_2:\ X\times Y\rightarrow \mathbb{R}_+$.
We denote $f_1(x, \, y)\underset{y}{\lesssim} f_2(x, \, y)$ (or $f_2(x, \, y)\underset{y}{\gtrsim} f_1(x, \, y)$) if for each $y\in Y$ there is $c(y)>0$ such that $f_1(x, \, y)\le
c(y)f_2(x, \, y)$ for all $x\in X$; $f_1(x, \,
y)\underset{y}{\asymp} f_2(x, \, y)$ if $f_1(x, \, y)
\underset{y}{\lesssim} f_2(x, \, y)$ and $f_2(x, \,
y)\underset{y}{\lesssim} f_1(x, \, y)$.

\begin{Trm}
\label{main1}
Let $d\in \N$, $r_j>0$, $\alpha_j\in \R$, $1\le p_j\le\infty$, $2\le q_j<\infty$, $j=1, \, \dots, \, d$. Suppose that 
\begin{align}
\label{emb_cond}
1 + \frac{d-\mu}{\langle \overline{r}\circ \overline{q} \rangle _{I(\mu+1, \, d)}} - \frac{d-\mu}{\langle \overline{r}\circ \overline{p} \rangle _{I(\mu+1, \, d)}}
>0.
\end{align}
We set
$$
\theta_t =\frac{1}{\frac{t}{\langle \overline{r} \rangle _{I(1, \, t)}}+\frac{2(d-t)}{\langle \overline{r}\circ\overline{q}\rangle _{I(t+1,\, d)}}}\left(1+(d-t)\left(\frac{1}{\langle \overline{r}\circ\overline{q}\rangle_{I(t+1,\, d)}}-\frac{1}{\langle \overline{r}\circ\overline{p}\rangle_{I(t+1,\, d)}}\right)\right),
$$
$\mu\le t\le \nu$; if $\nu<d$ and there is $j\in \{\nu+1, \, \dots, \, d\}$ such that $q_{\sigma(j)}>2$, we also write
$$
\theta_d = \frac{\langle\overline{r}\rangle}{d}\Bigl(1+\frac{d-\nu}{2\langle \overline{r}\rangle_{I(\nu+1,\, d)}}-\frac{d-\nu}{\langle \overline{r} \circ\overline{p}\rangle_{I(\nu+1,\, d)}}\Bigr).
$$
Let $J=\{\mu, \, \mu+1, \, \dots, \, \nu\}\cup \{d\}$ if $\nu<d$ and there is $j\in \{\nu+1, \, \dots, \, d\}$ such that $q_{\sigma(j)}>2$; otherwise, we set $J=\{\mu, \, \mu+1, \, \dots, \, \nu\}$.
Suppose that there is $j_*\in J$ such that 
\begin{align}
\label{theta_j_min}
\theta_{j_*}<\min _{j\in J\backslash\{j_*\}}\theta_j. 
\end{align}
Then
$$
d_n(W^{\overline{r}}_{\overline{p}, \overline{\alpha}}(\mathbb{T}^d), \, L_{\overline{q}}(\mathbb{T}^d)) \underset{\overline{p}, \overline{q}, \overline{r},d}{\asymp} d_n(H^{\overline{r}}_{\overline{p}}(\mathbb{T}^d), \, L_{\overline{q}}(\mathbb{T}^d)) \underset{\overline{p}, \overline{q}, \overline{r},d}{\asymp}n^{-\theta_{j_*}}.
$$
\end{Trm}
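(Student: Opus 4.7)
By the embedding \eqref{emb_w_h}, $d_n(W^{\overline r}_{\overline p, \overline \alpha}(\mathbb{T}^d), L_{\overline q}(\mathbb{T}^d))\underset{\overline r}{\lesssim} d_n(H^{\overline r}_{\overline p}(\mathbb{T}^d), L_{\overline q}(\mathbb{T}^d))$, so it suffices to prove the upper estimate $d_n(H^{\overline r}_{\overline p}, L_{\overline q})\lesssim n^{-\theta_{j_*}}$ and the lower estimate $d_n(W^{\overline r}_{\overline p, \overline \alpha}, L_{\overline q})\gtrsim n^{-\theta_{j_*}}$. The whole argument rests on a dyadic frequency decomposition. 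For $\overline s\in \mathbb{Z}_+^d$, let $\rho(\overline s)\subset \mathbb{Z}^d$ be the block of frequencies $|k_j|\asymp 2^{s_j}$, and let $T_{\overline s}$ be a de la Vall\'ee Poussin-type projector onto trigonometric polynomials supported on $\rho(\overline s)$, so that $f=\sum_{\overline s} T_{\overline s} f$; following Temlyakov's approach, this sidesteps the endpoint restrictions $p_j\in\{1,\infty\}$ that Littlewood--Paley arguments would impose. For $f\in H^{\overline r}_{\overline p}(\mathbb{T}^d)$ one obtains $\|T_{\overline s} f\|_{L_{\overline p}}\lesssim 2^{-(\overline r,\overline s)}$ with $(\overline r,\overline s)=\sum_j r_j s_j$; likewise for $W^{\overline r}_{\overline p,\overline \alpha}$ via \eqref{oooo}. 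Thus the width problem reduces to estimating widths of direct sums, over $\overline s$, of anisotropic finite-dimensional balls $B^{\overline k(\overline s)}_{\overline p}$ inside $l^{\overline k(\overline s)}_{\overline q}$, with $\overline k(\overline s)=(2^{s_1},\dots,2^{s_d})$, weighted by $2^{-(\overline r,\overline s)}$.

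For the upper bound I would use the finite-dimensional width estimates for $d_n(B^{\overline k}_{\overline p},l^{\overline k}_{\overline q})$ mentioned in the introduction (proved in a companion section of the paper for the present parameter range). Approximation dimensions $n_{\overline s}$ are distributed across blocks subject to $\sum_{\overline s} n_{\overline s}\le n$. The permutation $\sigma$ from \eqref{upor} stratifies the coordinate axes by $\omega_{p_j,q_j}$, separating trivial axes ($p_{\sigma(j)}\ge q_{\sigma(j)}$ for $j\le \mu$), intermediate axes ($\mu<j\le\nu$), and the easy Hilbert-type axes ($j>\nu$). For each candidate split index $t\in J$ one obtains a schedule that keeps blocks with $(\overline r,\overline s)$ below some threshold intact, truncates intermediate blocks according to the finite-dimensional rates, and drops high blocks; summing the errors and solving for the threshold in terms of $n$ gives rate $n^{-\theta_t}$. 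The hypothesis \eqref{theta_j_min} then singles out $t=j_*$ as the dominant (worst) rate.

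For the lower bound on $d_n(W^{\overline r}_{\overline p,\overline \alpha},L_{\overline q})$ I would embed a scaled anisotropic ball supported on a single, carefully chosen dyadic block $\rho(\overline s_*)$ into $W^{\overline r}_{\overline p,\overline \alpha}(\mathbb{T}^d)$ using convolution with the Bernoulli kernels $F_{r_j}(\cdot,\alpha_j)$ from \eqref{frxa}: on $\rho(\overline s_*)$ each convolution acts as a Fourier multiplier of order $2^{-r_j s_{*,j}}$, so the rescaled ball $2^{-(\overline r,\overline s_*)} B^{\overline k(\overline s_*)}_{\overline p}$ (viewed as trigonometric polynomials) lies in $W^{\overline r}_{\overline p,\overline \alpha}$ up to constants. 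Choosing the direction of $\overline s_*$ and its magnitude so that the index $t=j_*$ is the active stratum in the anisotropic finite-dimensional lower bound for $d_{cn}(B^{\overline k(\overline s_*)}_{\overline p},l^{\overline k(\overline s_*)}_{\overline q})$, which is again provided by the companion finite-dimensional theorem, yields $d_n(W)\gtrsim n^{-\theta_{j_*}}$.

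The main obstacle is the combinatorial optimization in the upper bound: the simultaneous anisotropy in $\overline p$ and $\overline q$ forces a delicate multi-scale trade-off between ``kept'', ``compressed'', and ``dropped'' dyadic blocks, governed by the stratification from $\sigma$, $\mu$, $\nu$ and by the embedding condition \eqref{emb_cond}, which ensures that the tail sum over high blocks converges. Matching the exponent in both bounds requires locating the unique optimal layer, and this is precisely what \eqref{theta_j_min} guarantees; without the strict inequality, logarithmic corrections would appear and the clean rate $n^{-\theta_{j_*}}$ would fail. Verifying that the chosen block $\overline s_*$ in the lower bound realizes exactly this optimal layer, and that all constants depend only on $\overline p,\overline q,\overline r,d$, is the technical heart of the proof.
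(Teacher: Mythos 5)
Your high-level skeleton --- de la Vall\'ee Poussin discretization, reduction to widths of finite-dimensional balls $B^{\overline{k}}_{\overline{p}}$ in $l^{\overline{k}}_{\overline{q}}$, allocation of dimensions across scales, a lower bound via Bernoulli kernels and Bernstein-type inequalities embedding scaled polynomial balls into $W^{\overline{r}}_{\overline{p},\overline{\alpha}}$, and the use of \eqref{theta_j_min} to single out the dominant scale --- is indeed the paper's. But the decomposition you build it on is the wrong one for these classes, and this breaks both halves of the argument. Your key claim $\|T_{\overline{s}}f\|_{L_{\overline{p}}}\lesssim 2^{-(\overline{r},\overline{s})}$ with $(\overline{r},\overline{s})=\sum_j r_j s_j$ is the defining property of classes of \emph{dominating mixed} smoothness, not of anisotropic Nikol'skii classes: for $f\in H^{\overline{r}}_{\overline{p}}(\mathbb{T}^d)$ one only gets $\|T_{\overline{s}}f\|_{L_{\overline{p}}}\lesssim \min_{1\le j\le d}2^{-r_j s_j}$, since each coordinate difference controls the block separately. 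Concretely, for $d=2$, $r_1=r_2=r$, the function $f(x)=c\,2^{-rs}\cos(2^s x_1+2^s x_2)$ belongs to $H^{(r,r)}_{\overline{p}}(\mathbb{T}^2)$, yet $\|T_{(s,s)}f\|_{L_{\overline{p}}}\asymp 2^{-rs}\gg 2^{-2rs}$. So your upper-bound argument covers only a strictly smaller (mixed-smoothness) class, whose widths are of genuinely different order, and proves nothing for $H^{\overline{r}}_{\overline{p}}$. Symmetrically, in the lower bound the scaling $2^{-(\overline{r},\overline{s}_*)}$ is much smaller than membership in $W^{\overline{r}}_{\overline{p},\overline{\alpha}}$ requires: coordinatewise Bernstein inequalities only force $\|t\|_{L_{\overline{p}}}\lesssim 2^{-\max_j r_j s_{*,j}}$, and the loss is a full power. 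Already for $p_j=q_j=2$, $r_j=r$, your single-block construction yields at best $n^{-r}$, while the sharp rate here is $n^{-r/d}$; no choice of direction or magnitude of $\overline{s}_*$ repairs this, so your two bounds cannot meet.

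The repair is precisely the paper's device: replace the full dyadic lattice $\{\rho(\overline{s})\}_{\overline{s}\in\Z_+^d}$ by a \emph{one-parameter} anisotropic family. With $\beta_j=\frac{\langle\overline{r}\rangle}{d}\cdot\frac{1}{r_j}$ (so that $r_j\beta_j$ is independent of $j$), one sets $V(\overline{r},m)=V_{\lfloor 2^{\overline{\beta}m}\rfloor}$ and $A(\overline{r},m)=V(\overline{r},m)-V(\overline{r},m-1)$; Theorem \ref{appr_teor} then gives the correct substitute for your false block estimate, namely $\|A(\overline{r},m)f\|_{L_{\overline{p}}}\lesssim 2^{-m\langle\overline{r}\rangle/d}$ for all $f\in H^{\overline{r}}_{\overline{p}}(\mathbb{T}^d)$, while for the lower bound one takes the whole anisotropic box ${\cal T}(\lfloor 2^{\overline{\beta}m}\rfloor,\,d)_{\overline{p}}$ scaled by $2^{-m\langle\overline{r}\rangle/d}$, which lands in $C(\overline{r})W^{\overline{r}}_{\overline{p},\overline{\alpha}}(\mathbb{T}^d)$ by coordinatewise Bernstein inequalities (Lemmas \ref{up_discr1} and \ref{low_discr1}). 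Your multi-scale ``scheduling'' then collapses to a one-dimensional optimization: after Theorem \ref{fim_dim_poper}, the level-$m$ contribution is $n^{-h_t(m/\log_2 n)}$ for piecewise-linear functions $h_t$, and \eqref{theta_j_min} guarantees that $h=\max_t h_t$ has a unique minimum point $s_*$ on $[1,\,s_\mu]$ with $h(s_*)=\theta_{j_*}$, matched on both sides by choosing $m\asymp s_*\log_2 n$. Note also that \eqref{emb_cond} enters not only as a tail-convergence condition but through $\gamma_0(\overline{p},\overline{q},\overline{r})>0$, which is what makes the series $f=\sum_m A(\overline{r},m)f$ converge in $L_{\overline{q}}(\mathbb{T}^d)$ in the first place.
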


\begin{Trm}
\label{main2} Let $r_j>0$, $\alpha_j\in \R$, $1\le j\le d$, $\nu\in \{0, \, \dots, \, d\}$, $1\le p_j\le q_j\le 2$ for $1\le j\le \nu$, and $1\le q_j\le p_j\le \infty$, for $\nu+1\le j\le d$. Suppose that
\begin{align}
\label{theta_posit}
\theta:=\frac{\langle\overline{r}\rangle}{d}\left(1+\frac{\nu}{\langle \overline{r} \circ \overline{q}\rangle_{\{1, \, \dots, \, \nu\}}} - \frac{\nu}{\langle \overline{r} \circ \overline{p}\rangle_{\{1, \, \dots, \, \nu\}}}\right)>0.
\end{align}
Then
$$
d_n(W^{\overline{r}}_{\overline{p},\overline{\alpha}}(\mathbb{T}^d), \, L_{\overline{q}}(\mathbb{T}^d))
\underset{\overline{p}, \overline{q}, \overline{r},d}{\asymp} d_n(H^{\overline{r}}_{\overline{p}}(\mathbb{T}^d), \, L_{\overline{q}}(\mathbb{T}^d)) \underset{\overline{p}, \overline{q}, \overline{r}, d}{\asymp} n^{-\theta}.
$$
\end{Trm}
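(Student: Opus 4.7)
The first step is to invoke \eqref{emb_w_h} to reduce matters to proving the upper bound for $H^{\overline r}_{\overline p}(\mathbb T^d)$ and the lower bound for $W^{\overline r}_{\overline p,\overline\alpha}(\mathbb T^d)$. The core apparatus is an anisotropic de la Vall\'ee Poussin dyadic decomposition $f=\sum_{\overline s\in\Z_+^d}\delta_{\overline s}(f)$, in which $\delta_{\overline s}(f)$ is a trigonometric polynomial whose spectrum is concentrated in the block $\{\overline k:2^{s_j-1}\le|k_j|<2^{s_j+1}\}$. From the coordinate-difference definition of $H^{\overline r}_{\overline p}$ together with Jackson's inequality applied in each variable separately, one obtains, for each $j$, the bound $\|\delta_{\overline s}(f)\|_{L_{\overline p}}\lesssim 2^{-r_js_j}$; the sharp block estimate is then the minimum $\|\delta_{\overline s}(f)\|_{L_{\overline p}}\lesssim 2^{-\max_j r_js_j}$.

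For the upper bound, I would approximate $f$ by the polynomial $P_{\overline M}f=\sum_{s_j\le M_j}\delta_{\overline s}(f)$, whose range has dimension $\asymp\prod_j 2^{M_j}\lesssim n$. Passing from $L_{\overline p}$ to $L_{\overline q}$ is done coordinate-wise: the one-dimensional embedding $L_{p_j}(\mathbb T)\hookrightarrow L_{q_j}(\mathbb T)$ applies for free in the coordinates $j>\nu$ (since $q_j\le p_j$), while the one-dimensional Nikol'skii inequality contributes a factor $2^{s_j(1/p_j-1/q_j)}$ in the coordinates $j\le\nu$. Bounding the approximation error via the triangle inequality and summing the anisotropic tail, the optimization of $\overline M$ subject to $\sum_j M_j=\log_2 n$ is a weighted minimization whose minimum is precisely the exponent $\theta$ from \eqref{theta_posit}.

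For the lower bound, I would use the Bernoulli kernel representation \eqref{oooo} together with a Gluskin--Kashin-style construction. Concretely, for $\overline N=(N_1,\ldots,N_d)$ one places well-separated scaled copies of a localized tensor-product bump $\prod_j\psi(N_jx_j)$ with $\pm$ coefficients ranging over the cube $\{\pm 1\}^{\prod_jN_j}$, then scales by a constant small enough that the resulting family lies in $W^{\overline r}_{\overline p,\overline\alpha}(\mathbb T^d)$. Discretization --- Marcinkiewicz--Zygmund for $1<p_j,q_j<\infty$, with de la Vall\'ee Poussin and Rudin--Shapiro substitutes at the endpoints $p_j,q_j\in\{1,\infty\}$ --- identifies this family with a finite-dimensional product ball $B^{\overline k}_{\overline p}$ in $\ell^{\overline k}_{\overline q}$, $\overline k_j=N_j$, whose Kolmogorov width is bounded below by anisotropic Gluskin--Kashin-type estimates. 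Optimizing $\overline N$ subject to $\prod_jN_j\asymp 2n$ yields the matching lower bound of order $n^{-\theta}$; the imbalanced allocation dictated by $\overline r$ together with the exponent differences $1/p_j-1/q_j$ for $j\le\nu$ is what generates the correction factor in \eqref{theta_posit}.

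The principal technical obstacle is matching the two bounds at the precise exponent $\theta$. On the upper side, the sharp block estimate is the minimum over coordinates of the individual Jackson bounds (not their product), reflecting the coordinate-wise structure of $H^{\overline r}_{\overline p}$; the subsequent Nikol'skii-weighted tail must be summed so that the optimal allocation of $\overline M$ produces the claimed exponent. On the lower side, a naive single-block construction with a balanced choice of parameters only yields the weaker rate $n^{-\langle\overline r\rangle/d}$, so capturing the full correction factor in $\theta$ requires the bump/translate construction combined with the sharp finite-dim width estimate for anisotropic product balls in the regime $p_j\le q_j\le 2$. Finally, the hypothesis $\theta>0$ in \eqref{theta_posit} guarantees both the convergence of the dyadic tail sums in the upper bound and the compact embedding needed for a non-trivial lower bound.
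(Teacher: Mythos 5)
Your proposal reproduces the paper's high-level architecture (discretize, reduce to widths of mixed-norm finite-dimensional balls, optimize over scales), but both halves have genuine gaps at exactly the points where the theorem is nontrivial. On the upper bound: your $d$-parameter decomposition $f=\sum_{\overline{s}}\delta_{\overline{s}}(f)$ with the blockwise bound $\min_j 2^{-r_js_j}$ followed by a term-by-term triangle inequality cannot reach the sharp order $n^{-\theta}$ when $\nu<d$. For every coordinate $j>\nu$ the passage from $L_{p_j}$ to $L_{q_j}$ costs nothing, so all blocks with one coordinate $s_{j_0}$ just beyond $M_{j_0}$ and with $s_j$ ranging freely over $\{0,\dots,M_j\}$ for $j>\nu$ receive the \emph{identical} bound; at the optimal rectangle $M_j\asymp\beta_j m$, $\beta_j=\langle\overline{r}\rangle/(dr_j)$, $2^m\asymp n$, each such flat direction contributes $\asymp\log n$ equal terms, and your tail sums to $n^{-\theta}(\log n)^{d-\nu}$ rather than $n^{-\theta}$ (the same flatness occurs for any $j\le\nu$ with $p_j=q_j$). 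The paper avoids this by never splitting into $d$-parameter blocks: it uses the one-parameter anisotropic de la Vall\'ee Poussin family $A(\overline{r},m)=V(\overline{r},m)-V(\overline{r},m-1)$ on rectangles $N_j=\lfloor 2^{\beta_j m}\rfloor$, for which Theorem \ref{appr_teor} gives $\|A(\overline{r},m)f\|_{L_{\overline{p}}(\mathbb{T}^d)}\lesssim 2^{-m\langle\overline{r}\rangle/d}$, a single Nikol'skii application gives the $L_{\overline{q}}$ bound $2^{-m\gamma_0\langle\overline{r}\rangle/d}=2^{-m\theta d/\langle\overline{r}\rangle\cdot\langle\overline{r}\rangle/d}$, and the tail over the single index $m$ is a clean geometric series (this is the content of Lemma \ref{up_discr1} combined with the trivial width $d_0$ of the balls). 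Your scheme could be repaired by re-summing the blocks along the flat directions, but that re-summation is precisely the reconstruction of $A(\overline{r},m)$, and your proposal does not perform it.

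On the lower bound the gap is more serious: after discretization, everything hinges on the finite-dimensional estimate $d_n(B^{k_1,\dots,k_d}_{p_1,\dots,p_d},\,l^{k_1,\dots,k_d}_{q_1,\dots,q_d})\gtrsim k_{\nu+1}^{1/q_{\nu+1}-1/p_{\nu+1}}\cdots k_d^{1/q_d-1/p_d}$ for \emph{all} $n\le\frac{1}{2}k_1\cdots k_d$, in the regime $p_j\le q_j\le2$ ($j\le\nu$), $q_j\le p_j$ ($j>\nu$) --- Proposition \ref{fim_dim_poper2}. Your appeal to a "Gluskin--Kashin-style" sign-flip construction with "anisotropic Gluskin--Kashin-type estimates" does not deliver this: averaging arguments of that kind (e.g. Lemma \ref{a_q_low} in the paper) are built for $q_j\ge2$ and serve Theorem \ref{main1}, not Theorem \ref{main2}. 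In the present regime the required fact is that the product of octahedra is badly approximable in the $l_{2,1}$-metric, i.e. the Malykhin--Ryutin theorem (Theorem \ref{mr_teor}), a nontrivial result proved by entirely different means; it is exactly what makes the new cases $0<\nu<d$ work. You correctly sense that a balanced single-block choice only yields $n^{-\langle\overline{r}\rangle/d}$ and that a sharp product-ball estimate is needed, but you neither supply nor identify that estimate, so the lower bound is not established. (A minor point: your Marcinkiewicz--Zygmund plus Rudin--Shapiro discretization at endpoint exponents is unnecessary; the paper's Lemma \ref{low_discr}, via Fej\'er-kernel translates and the mixed-norm Riesz--Thorin theorem \ref{ri_tor}, covers all $1\le p_j,q_j\le\infty$ directly.)
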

Notice that the cases $\nu=0$ and $\nu=d$ in Theorem \ref{main2} are well-known; see, e.g., \cite{galeev85}.

In order to prove Theorems \ref{main1} and \ref{main2}, we use discretization method from \cite{teml_book}. The problem will be reduced to estimating the widths of finite-dimensional balls in the anisotropic norm.

Given $N\in \N$, $1\le s\le \infty$, $(x_i)_{i=1}^N\in \mathbb{R}^N$, we write $\|(x_i)_{i=1}^N\|_{l_s^N} = \left(\sum \limits _{i=1}^N |x_i|^s\right)^{1/s}$ for $s<\infty$, and $\|(x_i)_{i=1}^N\|_{l_s^N} = \max _{1\le i\le N}|x_i|$, for $s=\infty$. The space $\mathbb{R}^N$ with this norm is denoted by $l_s^N$; by $B_s^N$, we denote the unit ball in $l_s^N$.

Let $k_1, \, \dots, \, k_d\in \N$, $1\le p_1, \, \dots, \, p_d\le \infty$. By $l_{p_1,\dots, \, p_d}^{k_1,\dots, k_d}$ we denote the space $\mathbb{R}^{k_1\dots k_d}= \{(x_{j_1,\dots,j_d})_{1\le j_s\le k_s, \, 1\le s\le d}:\; x_{j_1,\dots,j_d}\in \mathbb{R}\}$ with norm defined by induction: for $d=1$ it is $\|\cdot\|_{l_{p_1}^{k_1}}$; for $d\ge 2$,
$$
\|(x_{j_1,\dots,j_d})_{1\le j_s\le k_s, \, 1\le s\le d}\|_{l_{p_1,\dots,p_d}^{k_1,\dots,k_d}} = \left\|\bigl(\|(x_{j_1,\dots, \, j_{d-1}, \, j_d})_{1\le j_s\le k_s, \, 1\le s\le d-1}\|_{l_{p_1,\dots, \, p_{d-1}}^{k_1,\dots, \, k_{d-1}}}\bigr)_{j_d=1}^{k_d}\right\|_{l_{p_d}^{k_d}}.
$$
Notice that $$l_{p_1,\dots, \, p_d}^{k_1,\dots, k_d}=L_{\overline{p}}(X_1\times \dots \times X_d, \, \mu_1\otimes \dots \otimes \mu_d),$$ where $X_j=\{1, \, \dots, \, k_j\}$, $\mu_j(\{t\})=1$, $1\le t\le k_j$, $j=1, \, \dots, \, d$.

By $B_{p_1,\dots, \, p_d}^{k_1,\dots, k_d}$ we denote the unit ball of the space $l_{p_1,\dots,p_d}^{k_1,\dots,k_d}$.

For $d=1$ the estimates of the widths of these balls were obtained in \cite{pietsch1, stesin, kashin_oct, bib_kashin, gluskin1, bib_gluskin, garn_glus}. The case $d=2$ was studied in \cite{galeev2, galeev5, izaak1, izaak2, mal_rjut, vas_besov, dir_ull}; for details, see, e.g, \cite{vas_mix2}.

We also notice that in \cite{hinr_mic} the problem of estimating the Kolmogorov and Gelfand widths of balls in finite-dimensional symmetric spaces (including Lorentz and Orlicz spaces) was studied; in \cite{schatten1, schatten2} order estimates for the Kolmogorov, Gelfand and approximation numbers
of embeddings of Schatten classes of $N\times N$-matrices were obtained.

\begin{Trm}
\label{fim_dim_poper}
Let $d\in \N$, $k_1, \, \dots, \, k_d\in \N$, $n\in \Z_+$, $n\le \frac{k_1\dots k_d}{2}$, $2\le q_j<\infty$, $1\le p_j\le \infty$, $j=1, \, \dots, \, d$. Let $\sigma$ be a permutation of $\{1, \, \dots, \, d\}$ such that \eqref{upor} holds.
The numbers $\mu\in \{0, \, \dots, \, d\}$ and $\nu\in \{0, \, \dots, \, d\}$ are defined by \eqref{mu_nu_def}.
Denote by $p_j^* = \max\{p_j, \, 2\}$, $1\le j\le d$.
Then
\begin{align}
\label{fin_dim_est1}
\begin{array}{c}
d_n(B^{k_1, \dots, k_d}_{p_1,\dots,p_d}, \, l^{k_1, \dots, k_d}_{q_1,\dots,q_d}) \underset{\overline{q}}{\asymp} \Phi_{\overline{p},\overline{q}}(k_1, \, \dots, \, k_d, \, n):=\\:=
 \prod _{j=1}^\mu k_{\sigma(j)}^{1/q_{\sigma(j)} - 1/p_{\sigma(j)}} \cdot \min \Bigl\{ 1, \, \min _{\mu+1\le t\le d} \prod _{j=\mu+1}^{t-1} k_{\sigma(j)}^{1/q_{\sigma(j)}-1/p^*_{\sigma(j)}}\times \\ \times(n^{-1/2}k_{\sigma(1)}^{1/2}\dots k_{\sigma(t-1)}^{1/2} k_{\sigma(t)}^{1/q_{\sigma(t)}}\dots k_{\sigma(d)}^{1/q_{\sigma(d)}})^{\omega _{p_{\sigma(t)},q_{\sigma(t)}}}\Bigl\};
\end{array}
\end{align}
if $\nu<d$,
\begin{align}
\label{fin_dim_est2}
\begin{array}{c}
\Phi_{\overline{p},\overline{q}}(k_1, \, \dots, \, k_d, \, n)= \prod _{j=1}^\mu k_{\sigma(j)}^{1/q_{\sigma(j)} - 1/p_{\sigma(j)}} \cdot \min \Bigl\{ 1, \, \min _{\mu+1\le t\le \nu} \prod _{j=\mu+1}^{t-1} k_{\sigma(j)}^{1/q_{\sigma(j)}-1/p_{\sigma(j)}}\times \\ \times(n^{-1/2}k_{\sigma(1)}^{1/2}\dots k_{\sigma(t-1)}^{1/2} k_{\sigma(t)}^{1/q_{\sigma(t)}}\dots k_{\sigma(d)}^{1/q_{\sigma(d)}})^{\omega _{p_{\sigma(t)},q_{\sigma(t)}}}, \\ \prod _{j=\mu+1}^{\nu} k_{\sigma(j)}^{1/q_{\sigma(j)}-1/p_{\sigma(j)}} \cdot n^{-1/2}k_{\sigma(1)}^{1/2}\dots k_{\sigma(\nu)}^{1/2}k_{\sigma(\nu+1)}^{1/q_{\sigma(\nu+1)}}\dots k_{\sigma(d)}^{1/q_{\sigma(d)}}\Bigl\}.
\end{array}
\end{align}
\end{Trm}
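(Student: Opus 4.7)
After relabelling the coordinates so that $\sigma$ is the identity permutation, the formula $\Phi_{\overline{p},\overline{q}}$ neatly separates the coordinates into three blocks, according to the value of $\omega_{p_j,q_j}$: the first $\mu$ coordinates contribute the trivial embedding prefactor $\prod_{j=1}^\mu k_j^{1/q_j-1/p_j}$ (since $p_j\ge q_j$ makes the identity $l_{p_j}^{k_j}\hookrightarrow l_{q_j}^{k_j}$ the best possible ``approximation''); the intermediate block $\mu+1,\dots,\nu$ where $2<p_j\le q_j$ is the one on which Gluskin's interpolation exponent $\omega$ lives; and the tail $\nu+1,\dots,d$ with $p_j\le 2$ can be absorbed into an $l_2$-block via Kashin's theorem. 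A standard factorization argument peels off the first $\mu$ coordinates and reduces the remaining problem to the case $\mu=0$ modulo this prefactor.

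For the upper bound, for each candidate $t$ appearing in the minimum in \eqref{fin_dim_est1} I would construct an $n$-dimensional approximating subspace by choosing $t$ as the ``pivot'' coordinate and reducing to the scalar Gluskin estimate. Concretely, one uses the norm inequalities $\|\cdot\|_{l_{q_j}^{k_j}}\le \|\cdot\|_{l_2^{k_j}}$ (valid for $q_j\ge 2$) to replace the target exponents $q_j$ by $2$ for $j>t$, and the inclusions $B_{p_j}^{k_j}\subset k_j^{1/p_j-1/p_j^*} B_{p_j^*}^{k_j}$ for $j<t$ to replace $p_j$ by $p_j^*=\max(p_j,2)$. After vectorizing the coordinates $j\ne t$ into a single index of length $N_t=\prod_{j\ne t}k_j$, the problem becomes a scalar Gluskin approximation for $d_n(B_{p_t}^{k_t N_t},l_{q_t}^{k_t N_t})$ on a carefully chosen cross-section, which produces exactly the $t$-th term in $\Phi_{\overline{p},\overline{q}}$. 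Taking the minimum over $t$ gives the upper bound.

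For the lower bound I would use the monotonicity of Kolmogorov widths with respect to inclusion of symmetric convex bodies, together with a block-diagonal construction that makes one-dimensional Gluskin bounds multiplicative. For each $t$, I would exhibit a sub-body of $B_{p_1,\dots,p_d}^{k_1,\dots,k_d}$ which, after suitable rescaling, is isomorphic up to absolute constants to a scaled classical Gluskin ball $B_{p_t}^M\hookrightarrow l_{q_t}^M$ for an appropriate $M$ depending on $(k_j)_{j\ne t}$; the embedded sub-body is supported on a ``diagonal'' in the $j\ne t$ coordinates chosen so that the target anisotropic norm restricted to the support reduces to a single $l_{q_t}$-norm up to a geometric prefactor. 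Applying the lower half of Gluskin's theorem for each $t$ and taking the maximum recovers $\Phi_{\overline{p},\overline{q}}$.

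The principal obstacle is to verify the case analysis: each of the $d-\mu$ (resp. $d-\mu+1$ when $\nu<d$) candidate values of $t$ contributes to both the upper and lower bound, and one must check that the minimum attained in the upper bound matches the maximum attained in the lower bound. The most delicate case is the boundary $t=\nu+1$ (or $t=d$ in the formulation via $J$), which absorbs the whole tail of $p_j\le 2$ coordinates via Kashin's theorem; here the $p_j^*$ versus $p_j$ distinction must be tracked exactly, and the embedding $l_{p_j}^{k_j}\hookrightarrow l_2^{k_j}$ (with trivial constant for $p_j\le 2$) needs to be used in a way that matches the $\omega=1$ endpoint of Gluskin's exponent in the pivot coordinate.
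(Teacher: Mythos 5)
Your lower-bound strategy contains the principal gap. Reducing to a one-dimensional Gluskin bound via a sub-body supported on a ``diagonal'' in the coordinates $j\ne t$ (or, what amounts to the same, vectors constant along those coordinates) yields, after the geometric prefactor, only $d_n(B_{p_{\sigma(t)}}^{M},\,l_{q_{\sigma(t)}}^{M})$ with $M\asymp k_{\sigma(t)}$: any support on which the mixed norm collapses to a single $l_{q_{\sigma(t)}}$-norm has at most $k_{\sigma(t)}$ ``free'' positions. This loses exactly the factors $k_{\sigma(1)}^{1/2}\cdots k_{\sigma(t-1)}^{1/2}$ inside the $\omega_{p_{\sigma(t)},q_{\sigma(t)}}$-power, and, worse, the bound becomes vacuous once $n>M/2$, whereas the theorem must hold up to $n\le k_1\cdots k_d/2$ (in the relevant regime $n$ is as large as $k_{\sigma(1)}\cdots k_{\sigma(t-1)}k_{\sigma(t)}^{2/q_{\sigma(t)}}\cdots k_{\sigma(d)}^{2/q_{\sigma(d)}}$). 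These $k^{1/2}$ enhancements are genuinely multi-dimensional: in the paper they come from Lemma \ref{a_q_low}, where one takes the convex hull $V^{k_1,\dots,k_d}_{s_1,\dots,s_d}$ of the orbit of a box indicator under the group $G$ of coordinate-wise sign changes and permutations and runs Gluskin's averaging argument directly in the mixed norm (convexity of $\|g(\hat x)-x\|^{q_d}$, the averaged inner-product estimate \eqref{sum_g_ineq}, Young's inequality); the estimates \eqref{dn11}--\eqref{dn33} then follow from the inclusion $s_1^{-1/p_1}\cdots s_d^{-1/p_d}V^{k_1,\dots,k_d}_{s_1,\dots,s_d}\subset B^{k_1,\dots,k_d}_{p_1,\dots,p_d}$ with three different regimes for $(s_j)$. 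I do not see how a block-diagonal construction ``making one-dimensional bounds multiplicative'' could be repaired without effectively redoing this orbit averaging, so as written this half of your argument fails.

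Your upper bound is also not right as described. The fractional power $\omega_{p_{\sigma(t)},q_{\sigma(t)}}$ sits on the \emph{whole} mixed quantity $n^{-1/2}\prod_{j<t}k_{\sigma(j)}^{1/2}\prod_{j\ge t}k_{\sigma(j)}^{1/q_{\sigma(j)}}$, and this cannot be produced coordinate-wise, nor by ``vectorizing'' the problem into a scalar width $d_n(B_{p_t}^{k_tN_t},\,l_{q_t}^{k_tN_t})$ — the anisotropic ball is not a scalar $l_p$-ball, and a direct subspace construction would give products of widths rather than a fractional power of one. The paper obtains the power from Gluskin's width-interpolation theorem (Theorem \ref{interp_ineq}) applied through the H\"older inequality between dual anisotropic norms \eqref{xp1_pd_2}: the exponents $\hat p_{\sigma(j)}$ are interpolated between $q_{\sigma(j)}$ and $2$ for $j<t$ (note: not $j>t$, as in your sketch, and the $p^*$-inclusions are used for $j\ge t$, not $j<t$), and the only scalar Gluskin input is \eqref{est_sig0}, i.e.\ $d_n(B_2^{k_1\cdots k_d},\,l_{\hat q}^{k_1\cdots k_d})$ with $\hat q=\max_j q_j$, after paying $\prod_j k_j^{1/q_j-1/\hat q}$ (this step uses Theorem \ref{glus}, not Kashin's theorem). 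Finally, your peeling-off of the first $\mu$ coordinates is not as innocent as stated: replacing $p_j$ by $\min\{p_j,q_j\}$ changes $\omega$ discontinuously when $q_j=2<p_j$ (from $0$ to $1$), and the paper has to verify separately that the two expressions for $\Phi_{\overline{p},\overline{q}}$ still agree in that case.
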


The proof generalizes the arguments in \cite{gluskin1, vas_besov}.

\begin{Sta}
\label{fim_dim_poper2} Let $\nu\in \{0, \, \dots, \, d\}$, $1\le p_j\le q_j\le 2$ for $1\le j\le \nu$, $1\le q_j\le p_j\le \infty$ for $\nu+1\le j\le d$, $n \le \frac{k_1\dots k_d}{2}$. Then
$$d_n(B^{k_1, \dots, k_d}_{p_1,\dots,p_d}, \, l^{k_1, \dots, k_d}_{q_1,\dots,q_d}) \asymp k_{\nu+1}^{1/q_{\nu+1}-1/p_{\nu+1}} \dots k_d^{1/q_d-1/p_d}.$$
\end{Sta}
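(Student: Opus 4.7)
The upper bound is the operator-norm bound $d_n \le d_0 = \|{\rm id}\colon l^{\overline{k}}_{\overline{p}}\to l^{\overline{k}}_{\overline{q}}\|$. Iterating H\"older coordinate by coordinate gives a factor $1$ on each ``good'' axis $j\le\nu$ (since $p_j\le q_j$ with normalized counting measure) and a factor $k_j^{1/q_j-1/p_j}$ on each ``bad'' axis $j>\nu$ (since $q_j\le p_j$), whose product equals the claimed $\beta:=\prod_{j>\nu}k_j^{1/q_j-1/p_j}$.

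For the lower bound, set $M=\prod_{j\le\nu}k_j$ and $K=\prod_{j>\nu}k_j$, so $N=MK$. The plan is to combine two reductions to known cases. The first (\emph{slicing}) fixes a good multi-index $\overline{i}\in\prod_{j\le\nu}[k_j]$; the coordinate slice $V_{\overline{i}}\subset\mathbb{R}^N$ is naturally isometric to $\mathbb{R}^K$ for both the $\overline{p}_{>\nu}$- and $\overline{q}_{>\nu}$-norms, and the restriction inequality $\dist(x,L)_{\overline{q}}\ge\dist(x,L_{\overline{i}})_{\overline{q}_{>\nu}}$ (with $L_{\overline{i}}$ the coordinate projection of $L$ onto $V_{\overline{i}}$) combined with the ``$\nu=0$'' multivariate case from \cite{galeev85} yields $d_n\gtrsim\beta$ whenever $n\le K/2$. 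The second reduction (\emph{lifting}) picks $v_0=\big(\prod_{j>\nu}k_j^{-1/p_j}\big)\mathbf{1}\in\mathbb{R}^K$ with $\|v_0\|_{\overline{p}_{>\nu}}=1$ and $\|v_0\|_{\overline{q}_{>\nu}}=\beta$, and restricts to the subspace $W=\mathbb{R}^M\otimes v_0\subset\mathbb{R}^N$: on $W$ the $\overline{p}$- and $\overline{q}$-norms collapse to $\|\cdot\|_{\overline{p}_{\le\nu}}$ and $\beta\,\|\cdot\|_{\overline{q}_{\le\nu}}$ respectively, and restriction reduces to the ``$\nu=d$'' case, giving $d_n\gtrsim\beta$ whenever $n\le M/2$.

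The main obstacle is the remaining range $\max(M,K)/2<n\le N/2$. My plan is to use $N$ pairwise $\ell_2$-orthogonal tensor-product test vectors $x_{\overline{i},l}=e_{\overline{i}}\otimes v_l$, where $\{v_l\}_{l=1}^K$ is a flat $\ell_2$-orthogonal basis of $\mathbb{R}^K$ (of Walsh/character type, with constant entry moduli) normalized so that $\|v_l\|_{\overline{p}_{>\nu}}=1$ and $\|v_l\|_{\overline{q}_{>\nu}}\asymp\beta$. Each test vector lies in $B^{\overline{k}}_{\overline{p}}$ with $\overline{q}$-norm $\asymp\beta$ and common $\ell_2$-norm $A=\prod_{j>\nu}k_j^{1/2-1/p_j}$; a standard trace identity forces $\dist(x_{\overline{i},l},L)_{\ell_2}\ge A/\sqrt 2$ for at least one $(\overline{i},l)$. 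The technical difficulty, and the main obstacle, is to convert this $\ell_2$-distance lower bound into a matching $l^{\overline{k}}_{\overline{q}}$-distance lower bound of order $\beta$: since the $\overline{q}$-norm need not dominate $\ell_2$ when some $q_j>2$ for $j>\nu$, the conversion requires exploiting the flat structure of $v_l$ by an anisotropic Khinchin--Kahane-type estimate applied to the Walsh expansion of the nearest competitor $y\in L$, handled coordinate by coordinate.
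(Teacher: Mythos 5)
Your upper bound is the same as the paper's (the inclusion $B^{k_1,\dots,k_d}_{p_1,\dots,p_d}\subset\beta\,B^{k_1,\dots,k_d}_{q_1,\dots,q_d}$ with $\beta=\prod_{j>\nu}k_j^{1/q_j-1/p_j}$), and your slicing and lifting reductions are sound as far as they go; but they cover only $n\le\max(M,K)/2$, and the entire difficulty of the proposition sits in the remaining range $\max(M,K)/2<n\le MK/2$, where the width must stay of order $\beta$ with no decay in $n$. The paper settles the whole range $n\le k_1\cdots k_d/2$ in one stroke via the Malykhin--Ryutin theorem (Theorem \ref{mr_teor}, \cite{mal_rjut}): $d_n\bigl((B_1^m)^k,\,l^{m,k}_{2,1}\bigr)\gtrsim k$ for $n\le mk/2$, applied to the inclusion $\prod_{j>\nu}k_j^{-1/p_j}\,(B_1^{k_1\cdots k_\nu})^{k_{\nu+1}\cdots k_d}\subset B^{k_1,\dots,k_d}_{p_1,\dots,p_d}$ together with the comparisons $\|\cdot\|_{l_{q_j}}\ge\|\cdot\|_{l_2}$ for $j\le\nu$ (since $q_j\le2$) and $\|\cdot\|_{l_{q_j}}\ge k_j^{1/q_j-1}\|\cdot\|_{l_1}$ for $j>\nu$. (A minor side issue: your slicing step cites \cite{galeev85} for the $\nu=0$ case, but that reference assumes $1<q_j\le p_j<\infty$, while the proposition allows $p_j=\infty$, $q_j=1$.)

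The decisive gap is your third step, and it cannot be repaired in the form you propose. Orthogonal averaging gives $\dist_{\ell_2}(x_{\overline i,l},L)\ge A/\sqrt2$ with $A=\prod_{j>\nu}k_j^{1/2-1/p_j}$, but whenever some $q_j<2$ for $j>\nu$ one has $\beta\gg A$, and no pointwise conversion for a single (even perfectly flat) test vector can bridge this. Concretely, take $d=1$, $p=\infty$, $q=1$, $x=(a,\dots,a)\in\R^k$ and $L=\mathrm{span}\bigl(x-\sqrt k\,a\,e_1\bigr)$: a direct computation gives $\dist_{l_2}(x,L)^2=a^2(k+\sqrt k)/2\ge\|x\|_{l_2}^2/2$, so $x$ satisfies exactly the conclusion of your trace argument, yet $\dist_{l_1}(x,L)\le\|x-(x-\sqrt k\,a\,e_1)\|_{l_1}=\sqrt k\,a\ll ka=\beta$ --- the competitor leaves an error concentrated on one coordinate, and the flatness of $x$ does not prevent this. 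Forcing the error to be spread requires averaging the $l_{\overline q}$-functional over a large group of test configurations (as in Lemma \ref{a_q_low}, which however works only for $q_j\ge2$); for $q_j<2$ on the outer axes this spreading statement \emph{is} the content of the Malykhin--Ryutin theorem, so your Khinchin--Kahane plan would in effect have to reprove it. (If $q_j\ge2$ for all $j>\nu$ your conversion does go through, since then $\|z\|_{l_{\overline q}}\ge\prod_{j>\nu}k_j^{1/q_j-1/2}\|z\|_{\ell_2}$ and $\prod_{j>\nu}k_j^{1/q_j-1/2}A=\beta$; the unproved case is precisely $q_j<2$.)
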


This estimate can be easily derived from Malykhin’s and Rjutin’s result \cite{mal_rjut} on estimates of the widths of a product of multi-dimensional octahedra.

The paper is organized as follows. In \S 2 we prove Theorem \ref{fim_dim_poper} and Proposition \ref{fim_dim_poper2}. In \S 3 we describe the discretization method; here we follow the book \cite{teml_book}. In \S 4 we prove Theorems \ref{main1} and \ref{main2}.

Notice that in \cite[Chapter III, Section 15.2]{besov_iljin_nik}, \cite[Chapter 2 \S 2]{itogi_nt}, \cite{galeev_emb, galeev1, galeev2, galeev85, galeev87, galeev4} the periodic Sobolev classes were defined as sets of functions with zero means in each variable and with restrictions on one or several partial Weyl derivatives. We consider these classes in the end of \S 4.

\section{Estimates for the widths of finite-dimensional balls in an anisotropic norm}

E.D. Gluskin \cite{gluskin1, bib_gluskin} obtained order estimates of $d_n(B_p^N, \, l_q^N)$ for $1\le p< q<\infty$. We formulate this result for $q\ge 2$, $p=2$, $n\le N/2$ (the case $q=p=2$ is obvious):

\begin{trma}
\label{glus} {\rm (see \cite{gluskin1, bib_gluskin}).} Let $2\le q<\infty$, $0\le n\le N/2$.
Then
$$
d_n(B_2^N, \, l_q^N) \underset{q}{\asymp} \min \{1, \,
n^{-1/2}N^{1/q}\}.
$$
\end{trma}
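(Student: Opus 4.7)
My plan is to pass to the Gelfand width via the duality
\[
d_n(B_2^N,\, l_q^N) = d^n(B_{q'}^N,\, l_2^N), \qquad q' = q/(q-1) \in (1,\,2],
\]
valid because the target $l_2^N$ is Hilbert; this identity follows by writing $\inf_{y \in L}\|x - y\|_q = \sup\{\langle u, x\rangle : u \in L^\perp,\ \|u\|_{q'} \le 1\}$ and swapping the two suprema. The problem then becomes: for every codimension-$n$ subspace $M \subset \R^N$, estimate $\sup\{\|u\|_2 : u \in M,\ \|u\|_{q'} \le 1\}$, both above (for a suitable $M$) and below (for every $M$).

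The upper bound splits into two pieces. First, $d_n \le 1$ is trivial: for $q' \le 2$ we have $\|u\|_2 \le \|u\|_{q'}$, so the choice $M = \R^N$ already gives $\sup \|u\|_2 \le 1$. For the nontrivial part $d_n \lesssim_q n^{-1/2} N^{1/q}$ (assumed $\le 1$), I would take $M = \ker A$ for a random Gaussian matrix $A : \R^N \to \R^n$ with i.i.d.\ $\mathcal N(0,\,1/N)$ entries. A small-ball probability estimate combined with a union bound over a net of $B_{q'}^N$ shows that, with positive probability, every $u \in \ker A \cap B_{q'}^N$ satisfies $\|u\|_2 \le C_q n^{-1/2} N^{1/q}$. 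This is the Garnaev--Gluskin random subspace construction restricted to $q' \in (1,\,2]$, a regime where no logarithmic factor appears (in contrast to $q' = 1$, i.e., $q = \infty$, which is excluded by hypothesis).

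For the lower bound, I must show that every codimension-$n$ subspace $M$ contains some $u$ with $\|u\|_{q'} \le 1$ and $\|u\|_2 \gtrsim_q n^{-1/2} N^{1/q}$ (saturated at $1$ in the easy regime). Naive choices — sign vectors $\varepsilon/\sqrt N \in B_2^N$, or Euclidean orthogonal complements of an approximating subspace — only yield the weaker estimate $N^{1/q - 1/2}$, and hence fail to exploit the constraint $n \le N/2$. Following Gluskin, I would instead use a volume / packing argument: if the supremum were strictly less than $r$, then $M \cap B_{q'}^N$ would be contained in a Euclidean ball of radius $r$ inside the $(N-n)$-dimensional subspace $M$, and comparing this against a sharp section estimate for $l_{q'}^N$ — obtained for instance through a Kashin-type decomposition of $\R^N$ into near-Euclidean subspaces adapted to the $l_{q'}$ geometry — would force $r \gtrsim_q n^{-1/2} N^{1/q}$. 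This sharp section/volume inequality is the combinatorial heart of the argument and the step where the bulk of the technical work lies; once it is in place, combining with the upper bound yields the claimed order estimate.
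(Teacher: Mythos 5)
First, a point of reference: the paper does not prove this statement at all --- it is quoted as Theorem~\ref{glus} from Gluskin \cite{gluskin1, bib_gluskin}; what the paper does contain is a $d$-dimensional generalization of Gluskin's actual proof (the lower bound via Lemma~\ref{a_q_low}, the upper bound via the interpolation Theorem~\ref{interp_ineq}). Against that background, the first two steps of your proposal are sound: the duality $d_n(B_2^N, \, l_q^N) = d^n(B_{q'}^N, \, l_2^N)$ is the standard finite-dimensional Ioffe--Tikhomirov identity and your swap-of-suprema justification is correct, and the random Gaussian kernel does yield the upper bound $\sup\{\|u\|_2:\; u\in \ker A\cap B_{q'}^N\} \le C_q\, n^{-1/2}N^{1/q}$ for fixed $q<\infty$ without logarithmic factors (your net-plus-small-ball scheme works once the radius absorbs a $q$-dependent constant so that the net cardinality, which by dual Sudakov is $\exp(C q N^{2/q}/r^2)$, is beaten by the small-ball exponent $e^{-cn}$; alternatively Gordon's escape theorem gives it in one stroke, since the Gaussian mean width of $B_{q'}^N$ is $\asymp \sqrt{q}\,N^{1/q}$).

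The genuine gap is the lower bound, and it is not a matter of missing technical detail: the volume/section comparison you propose provably cannot reach the stated order. If $M\cap B_{q'}^N \subset r(B_2^N\cap M)$ with $\dim M = N-n$, the strongest section estimate of the kind you invoke (Meyer--Pajor: for $q'\le 2$, every $(N-n)$-dimensional central section of $B_{q'}^N$ has volume at least that of $B_{q'}^{N-n}$, sharp on coordinate subspaces) forces only $r \ge \bigl(\mathrm{vol}(B_{q'}^{N-n})/\mathrm{vol}(B_2^{N-n})\bigr)^{1/(N-n)} \asymp N^{1/q-1/2}$, which is exactly the trivial H\"{o}lder floor $\|u\|_2 \ge N^{1/q-1/2}\|u\|_{q'}$ valid for \emph{every} vector, and falls short of the target $n^{-1/2}N^{1/q}$ by the factor $(N/n)^{1/2}$. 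No sharper volume estimate can close this: volume functionals do not see the codimension $n$ --- the extremal subspaces realizing radius $\asymp n^{-1/2}N^{1/q}$ in the upper bound still have sections of volume $\ge \mathrm{vol}(B_{q'}^{N-n})$, so the volume inequality is already saturated only in the proportional regime $n\asymp N$; and in the regime $n\le N^{2/q}$, where one must show $d_n \gtrsim_q 1$, volume comparisons again give only $N^{1/q-1/2}\ll 1$. The mechanism that actually produces the $n$-dependence (Gluskin's, generalized in the paper's Lemma~\ref{a_q_low}) is an averaging argument over the group $G$ of signed permutations applied to $V_s^N$, the convex hull of the orbit of an $s$-sparse $0/1$ vector, with $s^{-1/2}V_s^N\subset B_2^N$: one averages a convexity inequality of type \eqref{f_x_q} over $g\in G$ and exploits that all approximants $y_g$ lie in a single $n$-dimensional subspace through the correlation bound \eqref{sum_g_ineq}, $\frac{1}{|G|}\bigl|\sum_{g}\langle g(\hat x), \, y_g\rangle\bigr| \le n^{1/2}(s/N)^{1/2}\bigl(\frac{1}{|G|}\sum_g\|y_g\|_2^2\bigr)^{1/2}$, followed by Young's inequality; this gives $d_n(V_s^N, \, l_q^N)\gtrsim_q s^{1/q}$ for $n\lesssim N^{2/q}s^{1-2/q}$, and choosing $s\asymp (n^{1/2}N^{-1/q})^{1/(1/2-1/q)}$ (respectively $s=1$) yields $n^{-1/2}N^{1/q}$ (respectively the saturated bound $1$). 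That rank-sensitive averaging step --- not a packing or section-volume estimate --- is the combinatorial heart you deferred, and your sketch as written would fail without it.
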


We also need the following assertion.
\begin{trma}
\label{interp_ineq}
{\rm (see \cite{gluskin1}).} Let $0\le \theta\le 1$, let $\|\cdot \|_0$, $\|\cdot \|_1$, $\|\cdot \|_\theta$ be norms on $\R^N$, and let $\|\cdot \|_0^*$, $\|\cdot \|_1^*$, $\|\cdot \|_\theta^*$ be norms in the corresponding dual spaces. We set $X_t=(\R^N, \, \|\cdot\|_t)$. Denote by $B_t$ be the unit ball in $X_t$, $t=0, \, 1, \, \theta$. Suppose that $$\|x\|^*_\theta \le (\|x\|_1^*)^{\theta}(\|x\|_0^*)^{1-\theta}, \quad x\in \R^N.$$ Then
$$
d_n(B_\theta, \, X_1) \le (d_n(B_0, \, X_1))^{1-\theta}.
$$
\end{trma}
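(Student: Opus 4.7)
The plan is to rewrite both widths through Hahn--Banach duality and then apply the hypothesis pointwise under the constraint that defines the feasible set.

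First, I would record the standard distance-to-subspace duality. For any subspace $L\subset \R^N$ and any $y\in \R^N$,
$$
\inff _{z\in L}\|y-z\|_1 = \sup\bigl\{\langle x,y\rangle : x\in L^\perp,\ \|x\|_1^*\le 1\bigr\},
$$
where $L^\perp=\{x\in \R^N:\langle x,z\rangle =0\ \forall z\in L\}$ is the annihilator. This is the usual Hahn--Banach formula for the distance to a subspace in finite dimensions. Swapping the two suprema (legal since the pairing is bilinear and the relevant set in $x$ is compact), for every $t\in\{0,\theta\}$,
$$
\sup_{y\in B_t}\inff _{z\in L}\|y-z\|_1 \;=\; \sup\bigl\{\|x\|_t^* : x\in L^\perp,\ \|x\|_1^*\le 1\bigr\}.
$$

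Next, I would apply the assumption $\|x\|_\theta^* \le (\|x\|_1^*)^\theta(\|x\|_0^*)^{1-\theta}$ to each $x$ in the feasible set. The constraint $\|x\|_1^*\le 1$ kills the $\theta$-power factor, leaving $\|x\|_\theta^*\le (\|x\|_0^*)^{1-\theta}$. Taking the supremum and using monotonicity of $s\mapsto s^{1-\theta}$ on $[0,\infty)$,
$$
\sup\bigl\{\|x\|_\theta^* : x\in L^\perp,\ \|x\|_1^*\le 1\bigr\} \;\le\; \Bigl(\sup\bigl\{\|x\|_0^* : x\in L^\perp,\ \|x\|_1^*\le 1\bigr\}\Bigr)^{1-\theta}.
$$

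Finally, I would take the infimum over all subspaces $L\subset \R^N$ of dimension at most $n$. Combined with the duality identity of the first step applied with $t=\theta$ on the left and $t=0$ on the right, this gives
$$
d_n(B_\theta,X_1) \;\le\; \bigl(d_n(B_0,X_1)\bigr)^{1-\theta},
$$
as required. I do not foresee any real obstacle here: the whole argument is a short duality manipulation, and the only nontrivial input is the elementary Hahn--Banach distance formula, which holds for every finite-dimensional normed space.
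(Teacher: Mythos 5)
Your argument is correct: the paper itself states this result without proof (it is quoted from Gluskin \cite{gluskin1}), and your duality derivation is precisely the classical argument behind it --- rewriting $d_n(B_t,X_1)$ via the annihilator as $\inf_{L}\sup\{\|x\|_t^{*}:x\in L^{\perp},\ \|x\|_1^{*}\le 1\}$ and then applying the multiplicative hypothesis pointwise, where the constraint $\|x\|_1^{*}\le 1$ absorbs the factor $(\|x\|_1^{*})^{\theta}$. All steps check out (the swap of suprema needs no compactness, and monotonicity of $s\mapsto s^{1-\theta}$ lets you pull the power through both the supremum and the infimum), so there is nothing to correct.
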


\renewcommand{\proofname}{\bf Proof of Theorem \ref{fim_dim_poper}}

\begin{proof}
The equality \eqref{fin_dim_est2} can be checked directly. Let us prove \eqref{fin_dim_est1}.

{\it The upper estimate.} First we prove the estimate in the case when $p_j\le q_j$ for all $j=1, \, \dots, \, d$. Then $d_n(B^{k_1, \dots, k_d}_{p_1,\dots,p_d}, \, l^{k_1, \dots, k_d}_{q_1,\dots,q_d}) \le 1$.

Let $p_j\le 2$ for all $j=1, \, \dots, \, d$. Then $\mu=\nu=0$, and the right-hand side of \eqref{fin_dim_est2} is equal to $\min \{1, \, n^{-1/2}k_1^{1/q_1}\dots k_d^{1/q_d}\}$. We set $\hat q = \max _{1\le j\le d} q_j$. Applying Theorem \ref{glus}, we get 
\begin{align}
\label{est_sig0}
d_n(B^{k_1, \dots, k_d}_{p_1,\dots,p_d}, \, l^{k_1, \dots, k_d}_{q_1,\dots,q_d}) \le \prod _{j=1}^d k_j^{1/q_j-1/\hat q} d_n(B_2^{k_1\dots k_n}, \, l_{\hat q}^{k_1\dots k_n}) \underset{\overline{q}}{\lesssim} n^{-1/2}\prod _{j=1}^d k_j^{1/q_j}.
\end{align}
Hence in the case $p_j\le 2$ ($1\le j\le d$) the desired upper estimate is proved.

Let $p_j>2$ for some $j\in \{1, \, \dots, \, d\}$. Then, by \eqref{om_pq}, \eqref{upor} and \eqref{mu_nu_def}, we have $\omega_{p_{\sigma(1)}, q_{\sigma(1)}} < 1$. First we prove that 
\begin{align}
\label{est_sig1}
d_n(B^{k_1, \dots, k_d}_{p_1,\dots,p_d}, \, l^{k_1, \dots, k_d}_{q_1,\dots,q_d}) \underset{\overline{q}}{\lesssim} \Bigl(n^{-1/2}\prod _{j=1}^d k_j^{1/q_j} \Bigr)^{\omega_{p_{\sigma(1)}, q_{\sigma(1)}}}.
\end{align}
(Notice that if $\mu\ge 1$, we have $\omega_{p_{\sigma(1)}, q_{\sigma(1)}} = 0$ and the right-hand side of \eqref{est_sig1} is 1.)

Given $1\le p\le \infty$, we define the number $p'$ by the equation $\frac 1p + \frac{1}{p'} = 1$. By Theorem \ref{interp_ineq}, it suffices to prove that for all $x \in \R^{k_1\dots k_d}$
\begin{align}
\label{xp1_pd_2}
\|x\|_{l_{p_1', \dots,p_d'}^{k_1,\dots,k_d}} \le \|x\|_{l_{q_1', \dots,q_d'}^{k_1,\dots,k_d}}^{1-\omega_{p_{\sigma(1)}, q_{\sigma(1)}}} \|x\|_{l_{2, \dots,2}^{k_1,\dots,k_d}}^{\omega_{p_{\sigma(1)}, q_{\sigma(1)}}}.
\end{align}
Indeed, we define the numbers $\tilde p_j$ by the equation $\frac{1}{\tilde p_j} = \frac{1-\omega_{p_{\sigma(1)}, q_{\sigma(1)}}}{q_j} + \frac{\omega_{p_{\sigma(1)}, q_{\sigma(1)}}}{2}$, $1\le j\le d$. By \eqref{om_pq} and \eqref{upor}, we have $\tilde p'_j \le p'_j$; therefore,
$\|x\|_{l_{p_1', \dots,p_d'}^{k_1,\dots,k_d}} \le \|x\|_{l_{\tilde p_1', \dots,\tilde p_d'}^{k_1,\dots,k_d}}$. The relation $$\|x\|_{l_{\tilde p_1', \dots,\tilde p_d'}^{k_1,\dots,k_d}} \le \|x\|_{l_{q_1', \dots,q_d'}^{k_1,\dots,k_d}}^{1-\omega_{p_{\sigma(1)}, q_{\sigma(1)}}} \|x\|_{l_{2, \dots,2}^{k_1,\dots,k_d}}^{\omega_{p_{\sigma(1)}, q_{\sigma(1)}}}$$ follows from the H\"{o}lder's inequality (the proof is the same as for $d=2$; see Lemma 1 in \cite{vas_mix2}). Hence we get \eqref{xp1_pd_2}, which implies \eqref{est_sig1}.

Let now $\max\{2, \, \mu+1\}\le t\le d$. We show that
\begin{align}
\label{est_sig2} 
d_n(B^{k_1, \dots, k_d}_{p_1,\dots,p_d}, \, l^{k_1, \dots, k_d}_{q_1,\dots,q_d}) \underset{\overline{q}}{\lesssim} \prod _{j=1}^{t-1} k_{\sigma(j)} ^{1/q_{\sigma(j)} - 1/p^*_{\sigma(j)}} \Bigl(n^{-1/2}\prod _{j=1}^{t-1} k_{\sigma(j)}^{1/2} \prod _{j=t}^d k_{\sigma(j)}^{1/q_{\sigma(j)}}\Bigr) ^{\omega_{p_{\sigma(t)},q_{\sigma(t)}}}.
\end{align}

For $1\le j\le t-1$ we define the numbers $\hat p_{\sigma(j)}$ by
\begin{align}
\label{hat_p_def}
\frac{1}{\hat p_{\sigma(j)}} = \frac{1-\omega_{p_{\sigma(t)},q_{\sigma(t)}}}{q_{\sigma(j)}} + \frac{\omega_{p_{\sigma(t)},q_{\sigma(t)}}}{2}.
\end{align}
For $j\ge t$ we set $\hat p_{\sigma(j)} = p_{\sigma(j)}^*$. From \eqref{om_pq}, \eqref{upor} it follows that $\hat p_i \le p_i^*$, $1\le i\le d$. It remains to apply the inclusion $B^{k_1,\dots,k_d}_{p_1,\dots,p_d}\subset \prod_{j=1}^{t-1} k_{\sigma(j)}^{1/\hat p_{\sigma(j)} - 1/p^*_{\sigma(j)}} B^{k_1,\dots,k_d}_{\hat p_1,\dots,\hat p_d}$, the equalities $\omega_{\hat p_{\sigma(1)},q_{\sigma(1)}} = \dots = \omega_{\hat p_{\sigma(t-1)},q_{\sigma(t-1)}} =\omega_{p_{\sigma(t)},q_{\sigma(t)}}$, the estimates \eqref{est_sig0}, \eqref{est_sig1} for $d_n(B^{k_1, \dots, k_d}_{\hat{p}_1,\dots,\hat{p}_d}, \, l^{k_1, \dots, k_d}_{q_1,\dots,q_d})$ and the equality \eqref{hat_p_def}. This completes the proof of \eqref{est_sig2}.

Hence, 
\begin{align}
\label{up_low_tr} d_n(B^{k_1, \dots, k_d}_{p_1,\dots,p_d}, \, l^{k_1, \dots, k_d}_{q_1,\dots,q_d}) \underset{\overline{q}}{\asymp} \Phi_{\overline{p},\overline{q}}(k_1, \, \dots, \, k_d, \, n) \quad \text{if } \;p_j\le q_j, \; 1\le j\le d.
\end{align}

Now suppose that there is $j_0\in \{1, \, \dots, \, d\}$ such that $p_{j_0} > q_{j_0}$. Let $p^{**}_i = \min \{p_i, \, q_i\}$, $1\le i\le d$. We apply the inclusion $$B^{k_1,\dots,k_d}_{p_1,\dots,p_d} \subset \prod_{j=1}^{\mu} k_{\sigma(j)}^{1/q_{\sigma(j)}-1/p_{\sigma(j)}}B^{k_1,\dots,k_d}_{p^{**}_1,\dots,p^{**}_d}$$ and the estimate \eqref{up_low_tr} for $B^{k_1,\dots,k_d}_{p^{**}_1,\dots,p^{**}_d}$. In addition, we notice that, for $\nu=d$,
$$\Phi_{\overline{p},\overline{q}}(k_1, \, \dots, \, k_d, \, n) =
 \prod _{p_j>q_j} k_j^{1/q_j - 1/p_j} \times$$$$ \times\min \Bigl\{ 1, \, \min _{\omega_{p_t,q_t}\in (0, \, 1)} \prod _{0<\omega_{p_j,q_j}<\omega_{p_t,q_t}} k_j^{1/q_j-1/p_j}(n^{-1/2} \prod _{\omega_{p_j,q_j}< \omega_{p_t,q_t}} k_j^{1/2}\prod _{\omega_{p_j,q_j}\ge \omega_{p_t,q_t}} k_j^{1/q_j})^{\omega _{p_t,q_t}}\Bigl\},
$$
and for $\nu<d$,
$$\Phi_{\overline{p},\overline{q}}(k_1, \, \dots, \, k_d, \, n) =
 \prod _{p_j>q_j} k_j^{1/q_j - 1/p_j} \times$$$$ \times\min \Bigl\{ 1, \, \min _{\omega_{p_t,q_t}\in (0, \, 1)} \prod _{0<\omega_{p_j,q_j}<\omega_{p_t,q_t}} k_j^{1/q_j-1/p_j}(n^{-1/2} \prod _{\omega_{p_j,q_j}< \omega_{p_t,q_t}} k_j^{1/2}\prod _{\omega_{p_j,q_j}\ge \omega_{p_t,q_t}} k_j^{1/q_j})^{\omega _{p_t,q_t}}, $$$$\prod _{\omega_{p_j,q_j}\in (0, \, 1)}k_j^{1/q_j - 1/p_j} n^{-1/2} \prod_{\omega_{p_j,q_j}<1} k_j^{1/2}\prod_{\omega_{p_j,q_j}=1}k_j^{1/q_j}\Bigl\}.
$$
We write the similar formula for $\Phi_{\overline{p}^{**},\overline{q}}(k_1, \, \dots, \, k_d, \, n)$, where $\overline{p}^{**} = (p_1^{**}, \, \dots, \, p_d^{**})$. Notice that $\omega_{p_j,q_j}\ne \omega_{p_j^{**},q_j}$ only if $2=q_j<p_j$; in this case, $\omega_{p_j,q_j}=0$, $\omega_{p_j^{**},q_j}=1$. It remains to compare the formulas for $\Phi_{\overline{p},\overline{q}}(k_1, \, \dots, \, k_d, \, n)$
and for $$\prod_{p_j>q_j} k_{j}^{1/q_j-1/p_j}\Phi_{\overline{p}^{**},\overline{q}}(k_1, \, \dots, \, k_d, \, n),$$ and notice that $$\prod_{q_j=2< p_j}k_j^{1/2}\prod_{q_j=2= p_j}k_j^{1/q_j}=\prod _{q_j=2= p_j^{**}}k_j^{1/q_j}.$$

\vskip 0.3cm

{\it The lower estimate.} Let $$G = \{(\sigma_1, \, \dots, \, \sigma_d, \, \varepsilon_1, \, \dots, \, \varepsilon_d): \sigma _j \in S_{k_j}, \, \varepsilon_j \in \{-1, \, 1\}^{k_j}, \, 1\le j\le d\},$$
where $S_{k_j}$ is the group of permutations of $k_j$ elements.
Given $g = (\sigma_1, \, \dots, \, \sigma_d, \, \varepsilon_1, \, \dots, \, \varepsilon_d) \in G$, $\varepsilon_j = (\varepsilon_{j,1}, \, \dots, \, \varepsilon_{j,k_j})$, $x = (x_{i_1,\dots,i_d})_{1\le i_1\le k_1,\dots, 1\le i_d\le k_d}$, we set $$g(x) = (\varepsilon_{1,i_1}\dots \varepsilon _{d,i_d}x_{\sigma_1(i_1),\dots,\sigma_d(i_d)})_{1\le i_1\le k_1,\dots, 1\le i_d\le k_d}.$$

Let $s_j \in \{1, \, \dots, \, k_j\}$, $1\le j\le d$, $\overline{s}=(s_1, \, \dots, \, s_d)$. We set $\hat x(\overline{s}) = (\hat x_{i_1,\dots,i_d})_{1\le i_1\le k_1,\dots, 1\le i_d\le k_d}$, where
$$
\hat x_{i_1,\dots,i_d} = \begin{cases} 1 & \text{if }1\le i_1\le s_1, \, \dots, \, 1\le i_d\le s_d, \\ 0 & \text{otherwise},\end{cases}
$$
$$
V^{k_1,\dots,k_d}_{s_1,\dots,s_d} = {\rm conv}\, \{g(\hat x(\overline{s})):\; g\in G\}.
$$

\begin{Lem}
\label{a_q_low}
Let $2\le q_j<\infty$, $k_j\in \N$, $s_j\in \{1, \, \dots, \, k_j\}$, $1\le j\le d$. Then there exist numbers $a(\overline{q})>0$, $b(\overline{q})>0$ such that, for $n\le a(\overline{q}) k_1^{\frac{2}{q_1}}\dots k_d^{\frac{2}{q_d}} s_1^{1-\frac{2}{q_1}}\dots s_d^{1-\frac{2}{q_d}}$, 
$$
d_n(V^{k_1,\dots,k_d}_{s_1,\dots,s_d}, \, l_{q_1, \, \dots, \, q_d}^{k_1,\dots,k_d}) \ge b(\overline{q}) s_1^{1/q_1} \dots s_d^{1/q_d}.
$$
The function $a(\cdot)$ is non-increasing in each $q_j$, $1\le j\le d$, the function $b(\cdot)$ is continuous.
\end{Lem}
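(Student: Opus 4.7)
The plan is to generalize the classical Gluskin lower bound argument for $d_n(V^k_s, l_q^k)$ (proved in \cite{gluskin1} for $d=1$ and in \cite{vas_besov} for $d=2$) to the anisotropic $d$-dimensional setting. The first step is to dualize: by Hahn--Banach,
$$d_n(V^{k_1,\dots,k_d}_{s_1,\dots,s_d},\, l^{k_1,\dots,k_d}_{q_1,\dots,q_d}) = \inf_{\dim L = n} \sup_{\substack{z \in L^\perp\\ \|z\|_{l^{\overline k}_{\overline{q'}}}\le 1}} \sup_{g \in G}\, \langle g(\hat x(\overline s)),\, z\rangle,$$
where $L^\perp$ is the $l_2$-orthogonal complement and $1/q_j+1/q_j'=1$. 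Thus it suffices, for each fixed $n$-dimensional $L$, to produce a single $z \in L^\perp$ with $\|z\|_{l^{\overline k}_{\overline{q'}}}\le 1$ and $\sup_g \langle g(\hat x),\, z\rangle \gtrsim \prod_j s_j^{1/q_j}$.

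To build $z$, I would randomize: pick an $l_2$-orthonormal basis $(w_i)_{i=1}^{N-n}$ of $L^\perp$ (with $N=k_1\cdots k_d$), take i.i.d.\ Rademacher signs $(\eta_i)$, and set $z = \sum_i \eta_i w_i$. Each coordinate $z_{\overline\imath}$ is a Rademacher sum with variance $(P_{L^\perp})_{\overline\imath,\overline\imath}$, and $\sum_{\overline\imath}(P_{L^\perp})_{\overline\imath,\overline\imath}=N-n$. Applying Khintchine's inequality iteratively through each factor of the anisotropic norm $l^{\overline k}_{\overline{q'}}$ (noting that every $q_j'\le 2$), then using Jensen/concavity against the trace constraint, should yield a moment bound of the form
$$\mathbb{E}\|z\|^{\tau}_{l^{\overline k}_{\overline{q'}}} \underset{\overline q}{\lesssim} \prod_j k_j^{\tau(1/q_j'-1/2)}\,(N-n)^{\tau/2}$$
for an appropriate exponent $\tau$, which Markov converts into $\|z\|_{l^{\overline k}_{\overline{q'}}} \lesssim \prod_j k_j^{1/q_j'-1/2}\sqrt{N-n}$ with probability at least $1/2$.

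For the lower bound on $\sup_g \langle g(\hat x),\, z\rangle$ I would use two ingredients. The isotropy identity $\mathbb{E}_g[g(\hat x)g(\hat x)^T] = (\prod_j s_j/\prod_j k_j)\,I$ (straightforward from the group action) gives $\mathbb{E}_g \langle g(\hat x),\,z\rangle^2 = (\prod s_j/\prod k_j)\|z\|_2^2$, so $\sup_g \langle g(\hat x),\,z\rangle \ge \sqrt{\prod s_j/\prod k_j}\,\|z\|_2$, and $\|z\|_2^2$ concentrates around $N-n$. This naive route is however \emph{too weak} by the factor $\prod (k_j/s_j)^{1/q_j - 1/2}$; the essential sharpening is to exploit the freedom to choose the support box $A_1\times\dots\times A_d$ (with $|A_j|=s_j$) and apply a Khintchine--Kahane lower bound over the sign patterns $\varepsilon_j$. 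Specifically, one shows that
$$\sup_g \langle g(\hat x),\,z\rangle \gtrsim \sup_{|A_j|=s_j}\|z|_{A_1\times\dots\times A_d}\|_2\,,$$
and then — mimicking Gluskin's argument in each direction and using the symmetric distribution of $z$ — proves that with positive $\eta$-probability one can locate a box on which $\|z|_{A_1\times\dots\times A_d}\|_2 \gtrsim \prod_j s_j^{1/q_j}\cdot\prod_j k_j^{1/2-1/q_j'}\sqrt{N-n}/\sqrt{N}$, which is the Lorentz-type peak we need.

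Combining the two bounds via a union argument over $\eta$ produces $z$ realizing both, and the quantitative condition $n\le a(\overline q)\prod_j k_j^{2/q_j} s_j^{1-2/q_j}$ emerges from the balancing inequality, with $b(\overline q)$ continuous and $a(\overline q)$ non-increasing in each $q_j$ since the Khintchine constants $C_{q_j'}$ enjoy these monotonicity/continuity properties. The main obstacle is precisely the refined peak bound on $\sup_g \langle g(\hat x),\, z\rangle$: the 1D case reduces to the classical Lorentz norm $\|z\|_{[s]}=\sum_{\text{top }s}|z|^*$ controlled by Gluskin's combinatorial/concentration estimates, whereas for $d\ge 2$ the product structure of the supports couples the sign-group symmetries across factors and one must execute the optimization over boxes together with the tensorial Khintchine--Kahane argument simultaneously. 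This is where following the strategy of \cite{vas_besov} (the two-dimensional case) and extending it by induction on $d$, factor by factor, is essential.
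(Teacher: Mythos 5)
There is a genuine gap, and it sits exactly at the step you yourself flag as the main obstacle. Your plan combines (a) the dual-norm bound $\|z\|\lesssim\prod_jk_j^{1/q_j'-1/2}\sqrt{N-n}$ (with $N=k_1\cdots k_d$) with (b) the reduction $\sup_g\langle g(\hat x),z\rangle\gtrsim\sup_{|A_j|=s_j}\|z|_{A_1\times\cdots\times A_d}\|_2$. But (b) can never produce the factor $\prod_js_j^{1/q_j}$: since every $q_j'\le2$, for any box $A$ one has $\|z|_A\|_2\le\|z|_A\|_{l_{q_1',\dots,q_d'}}\le\|z\|_{l_{q_1',\dots,q_d'}}$, so after normalizing the dual norm to $1$ your chain tops out at $O(1)$, whereas the lemma demands $b\prod_js_j^{1/q_j}$, which is typically large. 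What is lost is the coherence gain of order $\sqrt{s_1\cdots s_d}$: for $d=1$ the Gluskin route works because $\sup_\varepsilon\sum_{i\in A}\varepsilon_iz_i=\sum_{i\in A}|z_i|$, but for $d\ge2$ the admissible signs are tensor products $\varepsilon_{1,i_1}\cdots\varepsilon_{d,i_d}$, which cannot match an arbitrary sign pattern, and the only generally valid lower bound for the supremum of the resulting Rademacher chaos is the $L^2$-average bound you invoke --- short by exactly $\sqrt{\prod_js_j}$. Hence ``induction on $d$, factor by factor'' cannot repair the scheme; the deficit is intrinsic to the reduction. (Your displayed target peak also has inverted exponents, $k_j^{1/2-1/q_j'}=k_j^{1/q_j-1/2}$ where consistency with (a) would require $k_j^{1/2-1/q_j}$, but this is secondary.) The randomized witness fails independently of this: if $L$ is spanned by $n$ rows of a Hadamard- or Fourier-type orthogonal matrix, then $L^\perp$ has a flat orthonormal basis, each coordinate of $z=\sum_i\eta_iw_i$ is subgaussian with variance about $(N-n)/N$, so with high probability $\max|z_{\overline{\imath}}|\lesssim\sqrt{\log N}\,\sqrt{(N-n)/N}$; then even the full coherent supremum $\sup_g\langle g(\hat x),z\rangle\le s_1\cdots s_d\max|z_{\overline{\imath}}|$ is too small compared with $\|z\|_{l_{q_1',\dots,q_d'}}\asymp\prod_jk_j^{1/q_j'-1/2}\sqrt{N-n}$ once $s_j\ll k_j$. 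Good dual witnesses for such $L$ are spiky (essentially $P_{L^\perp}$ applied to a box-supported tensor-sign vector), not Rademacher mixtures of a basis of $L^\perp$.

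For comparison, the paper never dualizes and thereby avoids the single-witness problem entirely. For a fixed $L$ with nearest points $y_g\in L$ to $g(\hat x)$, it proves the convexity inequality \eqref{f_x_q}: the tangent-plane bound for $x\mapsto\|g(\hat x)-x\|^{q_d}$ at $x=0$, strengthened by an additive term $c(\overline{q})\|x\|^{q_d}$ obtained by treating small and large $\|x\|$ separately. Averaging \eqref{f_x_q} over the whole group $G$, the linear (correlation) term is controlled by the trace-type estimate \eqref{sum_g_ineq}, $\frac1{|G|}\bigl|\sum_{g}\langle g(\hat x),y_g\rangle\bigr|\le n^{1/2}\bigl(\prod_js_j/k_j\bigr)^{1/2}\bigl(\frac1{|G|}\sum_g\|y_g\|_2^2\bigr)^{1/2}$ --- this is where $\dim L=n$ enters, in place of your construction of a single $z$. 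Since $q_j\ge2$, the averaged $\ell_2$ norm is dominated by the averaged $l_{q_1,\dots,q_d}$ norm, and under $n^{1/2}\le\tilde a\prod_jk_j^{1/q_j}s_j^{1/2-1/q_j}$ Young's inequality with $\tilde a(\overline{q})$ small absorbs the remaining terms, yielding $\max_g\|g(\hat x)-y_g\|^{q_d}\gtrsim\prod_js_j^{q_d/q_j}$; the continuity of $b$ and the monotonicity of $a$ come from the explicit choice of $\tilde a$ and from setting $a(\overline{q})=\min\{\tilde a(\tilde q_1,\dots,\tilde q_d):2\le\tilde q_j\le q_j\}$. If you wish to salvage a dual formulation, you would have to take $z$ built from projections of box tensor-sign vectors and average the loss $\langle g(\hat x),P_L\,\cdot\,\rangle$ over $G$ --- which is the paper's averaging argument in different clothing.
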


\renewcommand{\proofname}{\bf Proof}

\begin{proof}
We generalize the arguments in \cite{gluskin1, vas_besov}. For brevity, we write $\hat x = \hat x(\overline{s})$. First we prove that there is $c(\overline{q})>0$ such that for each $x=(x_{i_1,\dots,i_{d}})_{1\le i_1\le k_1,\dots,1\le i_{d}\le k_{d}}$
\begin{align}
\label{f_x_q} \begin{array}{c}\|g(\hat x)-x\|^{q_d}_{l_{q_1,\dots,q_{d}}^{k_1,\dots,k_{d}}} \ge \frac{s_1^{q_d/q_1}\dots s_{d-1}^{q_d/q_{d-1}}s_d}{2} - \\ -q_ds_1^{\frac{q_d}{q_1}-1} \dots s_{d-1}^{\frac{q_d}{q_{d-1}}-1} \sum \limits _{1\le i_1\le k_1, \dots, 1\le i_{d}\le k_{d}} g(\hat x)_{i_1,\dots,i_d}x_{i_1,\dots,i_d} + c(\overline{q})\|x\|^{q_d}_{l_{q_1,\dots,q_{d}}^{k_1,\dots,k_{d}}},\end{array}
\end{align}
and the function $c(\cdot)$ is continuous.

Indeed, let $\varphi(x) = \|g(\hat x)-x\|^{q_d}_{l_{q_1,\dots,q_{d}}^{k_1,\dots,k_{d}}}$. Then the function $\varphi$ is convex, $\varphi(0) = s_1^{q_d/q_1}\dots s_{d-1}^{q_d/q_{d-1}}s_d$, $\varphi'_{x_{i_1,\dots,i_d}}(0) = - q_ds_1^{\frac{q_d}{q_1}-1} \dots s_{d-1}^{\frac{q_d}{q_{d-1}}-1}g(\hat x)_{i_1,\dots,i_d}$. Hence
$$
\|g(\hat x)-x\|^{q_d}_{l_{q_1,\dots,q_{d}}^{k_1,\dots,k_{d}}} \ge s_1^{q_d/q_1}\dots s_{d-1}^{q_d/q_{d-1}}s_d - $$$$ - q_ds_1^{\frac{q_d}{q_1}-1} \dots s_{d-1}^{\frac{q_d}{q_{d-1}}-1} \sum \limits _{1\le i_1\le k_1, \dots, 1\le i_{d}\le k_{d}} g(\hat x)_{i_1,\dots,i_d}x_{i_1,\dots,i_d}.
$$
For each $t>0$ we set $c_0(\overline{q}, \, t)=\frac{1}{2t^{q_d}}$. Then for all $x$ such that $\|x\|_{l_{q_1,\dots,q_{d}}^{k_1,\dots,k_{d}}} \le t s_1^{1/q_1}\dots s_{d}^{1/q_{d}}$ we have
$$
\|g(\hat x)-x\|^{q_d}_{l_{q_1,\dots,q_{d}}^{k_1,\dots,k_{d}}} \ge \frac{s_1^{q_d/q_1}\dots s_{d-1}^{q_d/q_{d-1}}s_d}{2} - $$$$-q_ds_1^{\frac{q_d}{q_1}-1} \dots s_{d-1}^{\frac{q_d}{q_{d-1}}-1} \sum \limits _{1\le i_1\le k_1, \dots, 1\le i_{d}\le k_{d}} g(\hat x)_{i_1,\dots,i_d}x_{i_1,\dots,i_d} + c_0(\overline{q}, \, t)\|x\|^{q_d}_{l_{q_1,\dots,q_{d}}^{k_1,\dots,k_{d}}}.
$$
Further, there is $t_0(\overline{q})>0$ continuously depending on $\overline{q}$ such that if $\|x\|_{l_{q_1,\dots,q_{d}}^{k_1,\dots,k_{d}}} \ge t_0(\overline{q}) s_1^{1/q_1}\dots s_d^{1/q_d}$, then
\begin{align}
\label{gxxqdge}
\begin{array}{c}
\|g(\hat x)-x\|^{q_d}_{l_{q_1,\dots,q_{d}}^{k_1,\dots,k_{d}}} \ge \frac{s_1^{q_d/q_1}\dots s_{d-1}^{q_d/q_{d-1}}s_d}{2} -
\\
-q_ds_1^{\frac{q_d}{q_1}-1} \dots s_{d-1}^{\frac{q_d}{q_{d-1}}-1} \sum \limits _{1\le i_1\le k_1, \dots, 1\le i_{d}\le k_{d}} g(\hat x)_{i_1,\dots,i_d}x_{i_1,\dots,i_d} + 2^{-q_d}\|x\|^{q_d}_{l_{q_1,\dots,q_{d}}^{k_1,\dots,k_{d}}};
\end{array}
\end{align}
this implies \eqref{f_x_q}.

Let us prove \eqref{gxxqdge}. Indeed, from the inequality $(a+b)^{q_d}\le 2^{q_d-1}(a^{q_d} + b^{q_d})$ it follows that
$\|g(\hat x)-x\|^{q_d}_{l_{q_1,\dots,q_{d}}^{k_1,\dots,k_{d}}}+ s_1^{q_d/q_1}\dots s_{d-1}^{q_d/q_{d-1}} s_d \ge 2^{-q_d+1} \|x\|^{q_d}_{l_{q_1,\dots,q_{d}}^{k_1,\dots,k_{d}}}$; hence it suffices to prove that
$$
2^{-q_d} \|x\|_{l_{q_1,\dots,q_{d}}^{k_1,\dots,k_{d}}}^{q_d} \ge 2 s_1^{q_d/q_1}\dots s_{d-1}^{q_d/q_{d-1}}s_d - $$$$-q_ds_1^{\frac{q_d}{q_1}-1} \dots s_{d-1}^{\frac{q_d}{q_{d-1}}-1} \sum \limits _{1\le i_1\le k_1, \dots, 1\le i_{d}\le k_{d}} g(\hat x)_{i_1,\dots,i_d}x_{i_1,\dots,i_d}.
$$
By the H\"{o}lder's inequality, it suffices to check that
\begin{align}
\label{2qdnxqd}
2^{-q_d} \|x\|_{l_{q_1,\dots,q_{d}}^{k_1,\dots,k_{d}}}^{q_d} \ge 2 s_1^{q_d/q_1}\dots s_{d-1}^{q_d/q_{d-1}}s_d + q_ds_1^{\frac{q_d-1}{q_1}} \dots s_{d-1}^{\frac{q_d-1}{q_{d-1}}} s_d^{\frac{q_d-1}{q_d}}\|x\|_{l_{q_1,\dots,q_{d}}^{k_1,\dots,k_{d}}}.
\end{align}
We define the number $t\ge 0$ by the equation $\|x\|_{l_{q_1,\dots,q_{d}}^{k_1,\dots,k_{d}}}= t s_1^{1/q_1}\dots s_{d}^{1/q_{d}}$. Then \eqref{2qdnxqd} is equivalent to $2^{-q_d} t^{q_d} \ge 2 +q_dt$; this inequality holds for sufficiently large $t$ (the smallest value of such positive numbers $t$ is continuous in $\overline{q}$). This completes the proof of \eqref{f_x_q}.

Let $L\subset l_{q_1,\dots,q_d}^{k_1, \, \dots, \, k_d}$ be a subspace of dimension $n$. We denote by $y_g$ the nearest to $g(\hat x)$ element of $L$ with respect to the norm $\|\cdot\|_{l_{q_1,\dots,q_{d}}^{k_1,\dots,k_{d}}}$. Averaging the both sides of \eqref{f_x_q} over $g\in G$, we get
\begin{align}
\label{l_e_max_g}
\begin{array}{c}
\max _{g\in G} \|g(\hat x) - y_g\|_{l_{q_1,\dots,q_d}^{k_1, \, \dots, \, k_d}} ^{q_d} \ge \frac{s_1^{q_d/q_1}\dots s_{d-1}^{q_d/q_{d-1}}s_d}{2} + \frac{c(\overline{q})}{|G|} \sum \limits _{g\in G}\| y_g\|_{l_{q_1,\dots,q_d}^{k_1, \, \dots, \, k_d}} ^{q_d} - \\-q_d\frac{s_1^{\frac{q_d}{q_1}-1}\dots s_{d-1}^{\frac{q_d}{q_{d-1}}-1}}{|G|} \sum \limits _{g\in G} \sum \limits_{1\le i_1\le k_1, \dots, \, 1\le i_d\le k_d} g(\hat x)_{i_1,\dots,i_d} (y_g)_{i_1,\dots,i_d}.
\end{array}
\end{align}
Similarly as in \cite{gluskin1}, \cite[pp. 15--16]{vas_besov} we obtain the inequality
\begin{align}
\label{sum_g_ineq} \begin{array}{c}\frac{1}{|G|}\left|\sum \limits _{g\in G} \sum \limits_{1\le i_1\le k_1, \dots, \, 1\le i_d\le k_d} g(\hat x)_{i_1,\dots,i_d} (y_g)_{i_1,\dots,i_d}\right| \le \\ \le \frac{n^{1/2}s_1^{1/2}\dots s_d^{1/2}}{k_1^{1/2} \dots k_d^{1/2}} \left(\frac{1}{|G|}\sum \limits _{g\in G} \sum \limits_{1\le i_1\le k_1, \dots, \, 1\le i_d\le k_d}|(y_g)_{i_1,\dots,i_d}|^2\right)^{1/2}.\end{array}
\end{align}
Let $n^{1/2}\le \tilde a k_1^{1/q_1}\dots k_d^{1/q_d}s_1^{1/2-1/q_1} \dots s_d^{1/2-1/q_d}$, where $\tilde a>0$.
Since $q_j\ge 2$, $j=1, \, \dots, \, d$, the right-hand side of \eqref{sum_g_ineq} is not greater than
$$
\frac{n^{1/2}s_1^{1/2}\dots s_d^{1/2}}{k_1^{1/q_1} \dots k_d^{1/q_d}} \left(\frac{1}{|G|}\sum \limits _{g\in G} \|y_g\|_{l_{q_1,\dots,q_d}^{k_1,\dots,k_d}}^{q_d}\right)^{1/q_d}\le $$$$
\le \tilde a s_1^{1-1/q_1}\dots s_d^{1-1/q_d}\left(\frac{1}{|G|}\sum \limits _{g\in G} \|y_g\|_{l_{q_1,\dots,q_d}^{k_1,\dots,k_d}}^{q_d}\right)^{1/q_d}.
$$
Hence
$$
\left|\frac{s_1^{\frac{q_d}{q_1}-1}\dots s_{d-1}^{\frac{q_d}{q_{d-1}}-1}}{|G|} \sum \limits _{g\in G} \sum \limits_{1\le i_1\le k_1, \dots, \, 1\le i_d\le k_d} g(\hat x)_{i_1,\dots,i_d} (y_g)_{i_1,\dots,i_d}\right|\le
$$
$$
\le \tilde a s_1^{\frac{q_d-1}{q_1}}\dots s_d^{\frac{q_d-1}{q_d}}\left(\frac{1}{|G|}\sum \limits _{g\in G} \|y_g\|_{l_{q_1,\dots,q_d}^{k_1,\dots,k_d}}^{q_d}\right)^{1/q_d}.
$$
This together with \eqref{l_e_max_g} yields
\begin{align}
\label{max_g_young}
\begin{array}{c}
\max _{g\in G} \|g(\hat x) - y_g\|_{l_{q_1,\dots,q_d}^{k_1, \, \dots, \, k_d}} ^{q_d} \ge \frac{s_1^{q_d/q_1}\dots s_{d-1}^{q_d/q_{d-1}}s_d}{2} + \frac{c(\overline{q})}{|G|} \sum \limits _{g\in G}\| y_g\|_{l_{q_1,\dots,q_d}^{k_1, \, \dots, \, k_d}} ^{q_d} - 
\\
-q_d\tilde a s_1^{\frac{q_d-1}{q_1}}\dots s_d^{\frac{q_d-1}{q_d}}\left(\frac{1}{|G|}\sum \limits _{g\in G} \|y_g\|_{l_{q_1,\dots,q_d}^{k_1,\dots,k_d}}^{q_d}\right)^{1/q_d}.
\end{array}
\end{align}
It remains to apply the Young's inequality $\xi \eta \le \frac{|\xi|^{q_d}}{q_d} + \frac{|\eta|^{q_d'}}{q_d'}$ for 
$$
\xi = \tilde a^{1/2}\left(\frac{1}{|G|}\sum \limits _{g\in G} \|y_g\|_{l_{q_1,\dots,q_d}^{k_1,\dots,k_d}}^{q_d}\right)^{1/q_d}, \quad \eta = \tilde a^{1/2}s_1^{\frac{q_d-1}{q_1}}\dots s_d^{\frac{q_d-1}{q_d}}
$$
and to choose a sufficiently small $\tilde a = \tilde a(\overline{q})$, continuously depending on $\overline{q}$, such that the right-hand side of \eqref{max_g_young} is estimated from below by $s_1^{q_d/q_1}\dots s_{d-1}^{q_d/q_{d-1}}s_d/4$. After that we set $a(\overline{q}) = \min \{\tilde a(\tilde q_1, \, \dots, \, \tilde q_d):\; 2\le \tilde q_1\le q_1, \, \dots, \, 2\le \tilde q_d\le q_d\}$. 
\end{proof}

\begin{Rem}
\label{qj2} If $q_1=\dots=q_d=2$, we can use the formula for the square of a sum instead of \eqref{f_x_q}; after that we apply \eqref{sum_g_ineq} and get, for some $t\ge 0$,
$$
\max _{g\in G} \|g(\hat x) - y_g\|_{l_{2,\dots,2}^{k_1, \, \dots, \, k_d}} ^2 \ge s_1\dots s_d-2 \frac{n^{1/2}s_1^{1/2}\dots s_d^{1/2}}{k_1^{1/2}\dots k_d^{1/2}} t +t^2 \ge s_1\dots s_d\left(1-\frac{n}{k_1\dots k_d}\right);
$$
hence
$$
d_n(V_{s_1, \, \dots, \, s_d}^{k_1, \, \dots, \, k_d}, \, l_2^{k_1\dots k_d}) \ge s_1^{1/2}\dots s_d^{1/2} \sqrt{1-\frac{n}{k_1\dots k_d}}.
$$
\end{Rem}

Applying Lemma \ref{a_q_low} and Remark \ref{qj2} and generalizing arguments from \cite[formula (9)]{vas_mix2} to the $d$-dimensional case, we get
\begin{Cor}
\label{v_dn_n} Let $2\le q_j<\infty$, $k_j\in \N$, $s_j\in \{1, \, \dots, \, k_j\}$, $1\le j\le d$, $n\in \Z_+$, $n\le \frac{k_1\dots k_d}{2}$. Then 
\begin{align} \label{vk1kddn}
d_n(V^{k_1,\dots,k_d}_{s_1,\dots,s_d}, \, l_{q_1, \, \dots, \, q_d}^{k_1,\dots,k_d}) \underset{\overline{q}}{\gtrsim} \begin{cases} s_1^{1/q_1} \dots s_d^{1/q_d}, & n \le k_1^{\frac{2}{q_1}}\dots k_d^{\frac{2}{q_d}} s_1^{1-\frac{2}{q_1}}\dots s_d^{1-\frac{2}{q_d}}, \\ n^{-1/2}k_1^{1/q_1}\dots k_d^{1/q_d}s_1^{1/2}\dots s_d^{1/2}, & n \ge k_1^{\frac{2}{q_1}}\dots k_d^{\frac{2}{q_d}} s_1^{1-\frac{2}{q_1}}\dots s_d^{1-\frac{2}{q_d}}. \end{cases}
\end{align}
\end{Cor}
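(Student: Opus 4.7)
I would prove Corollary \ref{v_dn_n} by treating the two branches of the bound separately, with the cut at the threshold $n_0 := k_1^{2/q_1}\cdots k_d^{2/q_d}\,s_1^{1-2/q_1}\cdots s_d^{1-2/q_d}$. A direct substitution shows that at $n=n_0$ the two candidate values $\prod_j s_j^{1/q_j}$ and $n^{-1/2}\prod_j k_j^{1/q_j}\prod_j s_j^{1/2}$ coincide, so \eqref{vk1kddn} really expresses a continuous, non-increasing lower envelope; this sanity check is what I would write first, because it tells me exactly which regime demands which tool.

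For $n\le n_0$ the target bound $\prod_j s_j^{1/q_j}$ is essentially Lemma~\ref{a_q_low}. The lemma gives it outright for $n\le a(\overline{q})\,n_0$. To close the gap, for $n$ in the range $(a(\overline{q})\,n_0,\, n_0]$ I would set $n':=\lfloor a(\overline{q})\,n_0\rfloor$ and invoke monotonicity $d_n\ge d_{n'}$; the multiplicative loss $a(\overline{q})^{\cdots}$ is absorbed by the $\underset{\overline{q}}{\gtrsim}$ symbol. This step is routine.

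For $n_0<n\le k_1\cdots k_d/2$ the target is $n^{-1/2}\prod_j k_j^{1/q_j}\prod_j s_j^{1/2}$, and this is where the real work lies. Here I would follow the template of \cite[formula~(9)]{vas_mix2}, combining Lemma~\ref{a_q_low} with Remark~\ref{qj2}. The idea is to pass to a sub-body of $V_{\overline{s}}^{\overline{k}}$ obtained by choosing a sub-cube $\overline{k}'\le \overline{k}$ (with $s_j\le k'_j$) and extending by zero, which preserves the anisotropic $l_{\overline{q}}$-norm on the supported coordinates and yields the containment $V_{\overline{s}}^{\overline{k}'}\subset V_{\overline{s}}^{\overline{k}}$. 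On $V_{\overline{s}}^{\overline{k}'}$ one applies Remark~\ref{qj2} to control the $l_2$-width and then transfers to $l_{\overline{q}}$ via the Hölder-type comparison $\|x\|_{l_2^{\prod k'_j}}\le \prod_j (k'_j)^{1/2-1/q_j}\|x\|_{l_{\overline{q}}^{\overline{k}'}}$, with $\overline{k}'$ tuned so that $\prod_j k'_j\asymp 2n$ (so Remark~\ref{qj2} bites) while the anisotropic exponents line up to produce the $n^{-1/2}$ factor.

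The main obstacle, as I see it, is precisely this tuning in the anisotropic $d$-dimensional setting. Uniform scalings $k'_j=\lambda k_j$ do not suffice: they either push $k'_j$ above $k_j$ (infeasible) or leak a factor like $(\prod_j s_j/\prod_j k_j)^{1/2}$ against us. One must instead allocate the reduction across coordinates non-uniformly, using the ordering of $1/q_j-1/2$ among the indices. In $d=2$ this bookkeeping is carried out in \cite{vas_mix2}; for general $d$ the combinatorial structure is the same but the verification that the chosen $\overline{k}'$ simultaneously satisfies $s_j\le k'_j\le k_j$, $\prod_j k'_j\asymp 2n$, and reproduces the target exponent is the computationally delicate part that I expect to take the bulk of the proof.
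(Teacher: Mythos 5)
Your first-branch patch does not work as written: Kolmogorov widths are non-increasing in $n$, so for $n':=\lfloor a(\overline{q})\,n_0\rfloor\le n$ monotonicity gives $d_n\le d_{n'}$, not $d_n\ge d_{n'}$; there is also no multiplicative constant to ``absorb'', since monotonicity in $n$ produces no factor at all. Thus the stripe $a(\overline{q})\,n_0<n\le n_0$ is genuinely uncovered by Lemma~\ref{a_q_low} alone, and no adjustment of $\overline{k}$ or $\overline{s}$ repairs it (shrinking $k_j$ or $s_j$ only lowers the threshold $\prod_j k_j^{2/q_j}s_j^{1-2/q_j}$).

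The second-branch plan fails quantitatively, and already for $d=1$: restricting to a sub-box $\overline{k}'$ with $\prod_j k_j'\asymp 2n$ (which Remark~\ref{qj2} forces, else its square-root factor vanishes) and transferring by H\"older on the sub-box yields at best $\prod_j (k_j')^{1/q_j-1/2}\prod_j s_j^{1/2}$; the ratio of this to the target $n^{-1/2}\prod_j k_j^{1/q_j}\prod_j s_j^{1/2}$ is $\asymp\prod_j (k_j'/k_j)^{1/q_j}\le 1$, with equality only when $k_j'\asymp k_j$ for all $j$, i.e.\ $n\asymp k_1\dots k_d$. For $d=1$, $s=1$ your scheme gives $n^{1/q-1/2}$ instead of Gluskin's $n^{-1/2}k^{1/q}$, short by the factor $(n/k)^{1/q}$, and no ``non-uniform allocation'' of the $k_j'$ can remove this deficit, because the constraint $\prod_j k_j'\gtrsim n$ pins it down. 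The missing idea is interpolation in the \emph{integrability exponents}, not in the box sizes: for $n_0\le n\lesssim k_1\dots k_d$ choose $\tilde q_j\in[2,q_j]$, say $1/\tilde q_j=(1-\lambda)/q_j+\lambda/2$, with $\lambda$ tuned so that $n\asymp a(\overline{\tilde q})\prod_j k_j^{2/\tilde q_j}s_j^{1-2/\tilde q_j}$; apply Lemma~\ref{a_q_low} in $l_{\tilde q_1,\dots,\tilde q_d}^{k_1,\dots,k_d}$ --- legitimate uniformly in $\lambda$ exactly because the lemma states that $a(\cdot)$ is non-increasing and $b(\cdot)$ is continuous (this is why those clauses are there) --- and pass back via the full-box H\"older inequality $\|x\|_{l^{k_1,\dots,k_d}_{q_1,\dots,q_d}}\ge \prod_j k_j^{1/q_j-1/\tilde q_j}\|x\|_{l^{k_1,\dots,k_d}_{\tilde q_1,\dots,\tilde q_d}}$. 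At the threshold choice $n^{1/2}\asymp\prod_j k_j^{1/\tilde q_j}s_j^{1/2-1/\tilde q_j}$ this gives precisely $\prod_j k_j^{1/q_j-1/\tilde q_j}s_j^{1/\tilde q_j}\asymp n^{-1/2}\prod_j k_j^{1/q_j}\prod_j s_j^{1/2}$, and the same family with small $\lambda$ also closes your first-branch stripe, since $n^{-1/2}\prod_j k_j^{1/q_j}\prod_j s_j^{1/2}\ge\prod_j s_j^{1/q_j}$ for $n\le n_0$. Remark~\ref{qj2} is then needed only on the terminal range $c(\overline{q})\,k_1\dots k_d<n\le \frac{k_1\dots k_d}{2}$, where $\lambda\to1$ and the constants above degenerate: there the exact $q=2$ computation plus full-box H\"older gives $\prod_j k_j^{1/q_j-1/2}\prod_j s_j^{1/2}\asymp n^{-1/2}\prod_j k_j^{1/q_j}\prod_j s_j^{1/2}$. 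This exponent-interpolation is the content of the argument of \cite[formula (9)]{vas_mix2} that the paper invokes; your sub-box device, while its embedding step $V^{k_1',\dots,k_d'}_{s_1,\dots,s_d}\subset V^{k_1,\dots,k_d}_{s_1,\dots,s_d}$ is correct, cannot reach the stated bound.
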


Now we obtain the lower estimates of $d_n(B^{k_1, \dots, k_d}_{p_1,\dots,p_d}, \, l^{k_1, \dots, k_d}_{q_1,\dots,q_d})$ for $n\le \frac{k_1\dots k_d}{2}$.
We use the inclusion
\begin{align}
\label{v_in_b}
s_1^{-1/p_1}\dots s_d^{-1/p_d} V^{k_1,\dots,k_d}_{s_1,\dots,s_d} \subset B^{k_1, \dots, k_d}_{p_1,\dots,p_d}
\end{align}
and \eqref{vk1kddn}. The numbers $s_1, \, \dots, \, s_d$ are defined as follows.

Let $n\le k_{\sigma(1)} \dots k_{\sigma(\mu)} k_{\sigma(\mu+1)}^{2/q_{\sigma(\mu+1)}} \dots k_{\sigma(d)}^{2/q_{\sigma(d)}}$. We set $s_{\sigma(j)} = k_{\sigma(j)}$ for $1\le j\le \mu$, $s_{\sigma(j)} = 1$ for $\mu+1\le j\le d$. Then from \eqref{vk1kddn} and \eqref{v_in_b} it follows that
\begin{align}
\label{dn11}
d_n(B^{k_1, \dots, k_d}_{p_1,\dots,p_d}, \, l^{k_1, \dots, k_d}_{q_1,\dots,q_d}) \underset{\overline{q}}{\gtrsim} k_{\sigma(1)}^{1/q_{\sigma(1)} - 1/p_{\sigma(1)}} \dots k_{\sigma(\mu)}^{1/q_{\sigma(\mu)} - 1/p_{\sigma(\mu)}}.
\end{align}

Let $t\in \{\mu+1, \, \mu+2, \, \dots, \, \nu\}$,
\begin{align}
\label{ks12qt}
k_{\sigma(1)} \dots k_{\sigma(t-1)} k_{\sigma(t)}^{2/q_{\sigma(t)}} \dots k_{\sigma(d)}^{2/q_{\sigma(d)}} < n \le k_{\sigma(1)} \dots k_{\sigma(t)} k_{\sigma(t+1)}^{2/q_{\sigma(t+1)}} \dots k_{\sigma(d)}^{2/q_{\sigma(d)}}.
\end{align}
Then $q_{\sigma(t)}>2$; by \eqref{om_pq}, \eqref{upor} and \eqref{mu_nu_def}, we have $p_{\sigma(j)}>2$, $j=1, \, \dots, \, t$.
Let $$l = (n^{1/2}k_{\sigma(1)}^{-1/2} \dots k_{\sigma(t-1)}^{-1/2} k_{\sigma(t)}^{-1/q_{\sigma(t)}}\dots k_{\sigma(d)}^{-1/q_{\sigma(d)}})^{\frac{1}{1/2-1/q_{\sigma(t)}}}.$$
By \eqref{ks12qt}, we have $1\le l\le k_{\sigma(t)}$. We set $s_{\sigma(j)}=k_{\sigma(j)}$ for $1\le j\le t-1$, $s_{\sigma(t)} = \lceil l \rceil$, $s_{\sigma(j)} = 1$ for $t+1\le j\le d$. Then $1\le s_i\le k_i$, $1\le i\le d$, $$n \le k_{\sigma(1)}^{2/q_{\sigma(1)}}\dots k_{\sigma(d)}^{2/q_{\sigma(d)}} s_{\sigma(1)}^{1-2/q_{\sigma(1)}} \dots s_{\sigma(d)}^{1-2/q_{\sigma(d)}}.$$
This together with \eqref{vk1kddn} and \eqref{v_in_b} implies that
\begin{align}
\label{dn22}
\begin{array}{c}
d_n(B^{k_1, \dots, k_d}_{p_1,\dots,p_d}, \, l^{k_1, \dots, k_d}_{q_1,\dots,q_d}) \underset{\overline{q}}{\gtrsim} k_{\sigma(1)}^{1/q_{\sigma(1)} - 1/p_{\sigma(1)}} \dots k_{\sigma(t-1)}^{1/q_{\sigma(t-1)} - 1/p_{\sigma(t-1)}} \times \\ \times
(n^{-1/2}k_{\sigma(1)}^{1/2} \dots k_{\sigma(t-1)}^{1/2} k_{\sigma(t)}^{1/q_{\sigma(t)}}\dots k_{\sigma(d)}^{1/q_{\sigma(d)}})^{\frac{1/p_{\sigma(t)}-1/q_{\sigma(t)}}{1/2-1/q_{\sigma(t)}}}.
\end{array}
\end{align}

For $\nu=d$ the desired lower estimate in \eqref{fin_dim_est1} follows from \eqref{dn11}, \eqref{dn22}.

Let $\nu<d$, $n> k_{\sigma(1)}\dots k_{\sigma(\nu)} k^{2/q_{\sigma(\nu+1)}}_{\sigma(\nu+1)} \dots k^{2/q_{\sigma(d)}} _{\sigma(d)}$. We set $s_{\sigma(j)} = k_{\sigma(j)}$ for $j\le \nu$, $s_{\sigma(j)} = 1$ for $j\ge \nu+1$. Then $n \ge k_{\sigma(1)}^{2/q_{\sigma(1)}}\dots k_{\sigma(d)}^{2/q_{\sigma(d)}} s_{\sigma(1)}^{1-2/q_{\sigma(1)}} \dots s_{\sigma(d)}^{1-2/q_{\sigma(d)}}$. From \eqref{vk1kddn} and \eqref{v_in_b} we get that
\begin{align}
\label{dn33}
\begin{array}{c}
d_n(B^{k_1, \dots, k_d}_{p_1,\dots,p_d}, \, l^{k_1, \dots, k_d}_{q_1,\dots,q_d}) \underset{\overline{q}}{\gtrsim} k_{\sigma(1)}^{1/q_{\sigma(1)} - 1/p_{\sigma(1)}} \dots k_{\sigma(\nu)}^{1/q_{\sigma(\nu)} - 1/p_{\sigma(\nu)}} \times
\\
\times k_{\sigma(\nu+1)}^{1/q_{\sigma(\nu+1)}}\dots k_{\sigma(d)}^{1/q_{\sigma(d)}}n^{-1/2} k_{\sigma(1)}^{1/2} \dots k_{\sigma(\nu)}^{1/2}.
\end{array}
\end{align}
Now the lower estimate follows from \eqref{dn11}, \eqref{dn22}, \eqref{dn33} and \eqref{fin_dim_est2}.
This completes the proof.
\end{proof}

\renewcommand{\proofname}{\bf Proof of Proposition \ref{fim_dim_poper2}}

The following theorem was proved in \cite{mal_rjut}.
\begin{trma}
\label{mr_teor} {\rm (see \cite{mal_rjut}).} Let $m$, $k\in \N$, $n\le \frac{mk}{2}$, $V=(B_1^m)^k$ (the Cartesian product of $k$ copies of $B_1^m$). Then
$$
d_n(V, \, l^{m,k}_{2,1}) \gtrsim k.
$$
\end{trma}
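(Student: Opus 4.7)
The plan is to pass to the Gelfand width via duality, where the target becomes a "concentration" problem in an arbitrary large subspace of $\mathbb{R}^{mk}$. Observe that $V=(B_1^m)^k$ is the unit ball of $l^{m,k}_{1,\infty}$, whose polar is the unit ball of $l^{m,k}_{\infty,1}$, and $l^{m,k}_{2,1}$ is dual to $l^{m,k}_{2,\infty}$; hence
\begin{equation*}
d_n\bigl((B_1^m)^k,\, l^{m,k}_{2,1}\bigr) \;=\; d^n\bigl(B_{l^{m,k}_{2,\infty}},\, l^{m,k}_{\infty,1}\bigr).
\end{equation*}
The statement is therefore equivalent to: every subspace $M\subseteq \mathbb{R}^{mk}$ with $\mathrm{codim}\, M \le mk/2$ contains a nonzero $\phi$ with
$\sum_{j=1}^k \|\phi_{\cdot,j}\|_\infty \gtrsim k\,\max_j \|\phi_{\cdot,j}\|_2$.

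My first attempt would mimic the Gluskin-style averaging of Lemma \ref{a_q_low}: take $\hat x=(e_1)_{j=1}^k\in V$ and let the group $G=\prod_{j=1}^k (S_m\ltimes \{-1,1\}^m)$ act slicewise, so that the orbit $\{g(\hat x):g\in G\}\subseteq V$ consists of the signed unit-vector sequences $(\pm e_{i_j})_{j=1}^k$, each of norm $k$ in $l^{m,k}_{2,1}$. For an $n$-dimensional $L\subseteq \mathbb{R}^{mk}$ and the best approximants $y_g\in L$, applying slicewise the inequality $\|u-v\|_2\ge 1-\langle u,v\rangle$ (for unit $u$) gives
\begin{equation*}
\|g(\hat x)-y_g\|_{l^{m,k}_{2,1}} \;\ge\; k - \langle g(\hat x),y_g\rangle.
\end{equation*}
Averaging over $g\in G$, the identity $\frac{1}{|G|}\sum_g g(\hat x)g(\hat x)^T=\frac{1}{m}I_{mk}$ combined with Cauchy--Schwarz and the dimension bound $\mathrm{tr}(\Pi_L)=n$ bounds the average of $\langle g(\hat x),y_g\rangle$ by $O(\sqrt{n/m}\cdot(\sqrt k+R))$, where $R=\max_g\|g(\hat x)-y_g\|_{l^{m,k}_{2,1}}$. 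A short bootstrap then yields $R\gtrsim \sqrt{k}$ for $n\le mk/2$.

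The hard part is bridging this $\sqrt k$-vs-$k$ gap, which is the real novelty of the Malykhin--Rjutin theorem. The obstruction is that $l^{m,k}_{2,1}$ is not strongly convex in the outer ($l_1$) direction, so the Hilbertian Gluskin square-inequality is genuinely lossy. To recover the missing factor $\sqrt k$, I would work on the Gelfand side: writing $M=\ker A$ for an $n\times mk$ matrix $A$, I would seek $\phi=\sum_j\alpha_j e_{i_j}\otimes\delta_j$ with indices $i_j\in\{1,\dots,m\}$ chosen so that the columns $a^{(j)}_{i_j}$ of $A$ admit coefficients $\alpha_j$ of common order of magnitude on $\gtrsim k$ slices. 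Since there are $m^k$ possible tuples $(i_j)$ and only $n\le mk/2$ linear constraints per fixed tuple, a combinatorial/probabilistic selection (a Kashin-type embedding applied inside the $m$ columns of $A$ associated with each slice, followed by a greedy compatibility argument across slices) should produce such a $\phi$. The technical core is the selection lemma guaranteeing $\Omega(k)$ "good" slices on which the chosen coordinate vectors satisfy the codimension-$n$ constraint with comparable coefficients; this is precisely the step for which I would invoke the detailed construction of Malykhin and R{\cdot}jutin in \cite{mal_rjut}.
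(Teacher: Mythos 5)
The first thing to say is that the paper itself contains no proof of this statement: Theorem \ref{mr_teor} is imported verbatim from \cite{mal_rjut} and used as a black box in the proof of Proposition \ref{fim_dim_poper2}, so there is no internal argument to compare yours against, and your attempt must stand on its own. It does not, and you concede as much: the decisive step is deferred to ``the detailed construction of Malykhin and Ryutin'', i.e.\ to the proof of the very theorem being established, which is circular. The portion you do carry out is correct but quantitatively insufficient. The duality identity $d_n((B_1^m)^k,\,l^{m,k}_{2,1})=d^n(B_{l^{m,k}_{2,\infty}},\,l^{m,k}_{\infty,1})$ is right, the slicewise minorant $\|g(\hat x)-y_g\|_{l^{m,k}_{2,1}}\ge k-\langle g(\hat x),y_g\rangle$ is right, and the averaging with $\frac{1}{|G|}\sum_g\|P_Lg(\hat x)\|_2^2=n/m$ does give $R\gtrsim\sqrt{k}$ for $n\le mk/2$ --- but that is a factor $\sqrt{k}$ short of the theorem, and, as you correctly observe, the loss is intrinsic to the Gluskin scheme here, since $\|\cdot\|_{2,1}$ admits no useful quadratic lower expansion in the outer $l_1$ direction (compare \eqref{f_x_q}, whose constant $c(\overline{q})$ degenerates as $q_d\to 1$).

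The sketch you offer to bridge the gap would moreover fail as stated. Your Ansatz seeks a nonzero $\phi=\sum_{j=1}^k\alpha_j\,e_{i_j}\otimes\delta_j\in M=\ker A$ with one nonzero coordinate per slice; such $\phi$ lives in a $k$-dimensional coordinate subspace $E_{(i_1,\dots,i_k)}$, and $A\phi=0$ reads $\sum_{j=1}^k\alpha_j a^{(j)}_{i_j}=0$ in $\mathbb{R}^n$. But the hypothesis permits $n$ as large as $mk/2$, hence $n\ge k$ whenever $m\ge 2$; for a generic $n\times mk$ matrix $A$ with $n\ge k$, any $k$ columns (one per slice block) are linearly independent, so $\ker A\cap E_{(i_1,\dots,i_k)}=\{0\}$ for every one of the $m^k$ tuples, and no greedy or probabilistic selection over tuples can produce a nonzero $\phi$ of this form. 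Your count ``only $n\le mk/2$ linear constraints per fixed tuple'' is the miscounted step: per tuple there are $k$ unknowns against $n$ constraints, an overdetermined system whenever $n>k$. Any correct argument must let $\phi$ occupy many coordinates per slice (to get into $M$) while still producing $\Omega(k)$ slices with $\|\phi_{\cdot,j}\|_\infty$ comparable to $\max_j\|\phi_{\cdot,j}\|_2$ --- the trade-off between membership and per-slice spikiness is exactly the content of \cite{mal_rjut}, and your outline contains no mechanism for it.
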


\begin{proof}
The upper estimate follows from the inclusion $B^{k_1,\dots,k_d}_{p_1, \, \dots, \, p_d} \subset k_{\nu+1}^{1/q_{\nu+1}-1/p_{\nu+1}} \dots k_d^{1/q_d-1/p_d}B^{k_1,\dots,k_d}_{q_1, \, \dots, \, q_d}$.

Let us prove the lower estimate. Applying the inclusion $$k_{\nu+1}^{-1/p_{\nu+1}} \dots k_d^{-1/p_d}(B_1^{k_1\dots k_\nu})^{k_{\nu+1}\dots k_d} \subset B^{k_1,\dots,k_d}_{p_1, \, \dots, \, p_d},$$ taking into account the conditions $q_j\le 2$ for $j\le \nu$ and applying Theorem \ref{mr_teor}, we get
$$
d_n(B^{k_1,\dots,k_d}_{p_1, \, \dots, \, p_d}, \, l^{k_1,\dots,k_d}_{q_1, \, \dots, \, q_d}) \ge $$$$ \ge k_{\nu+1}^{-1+1/q_{\nu+1}-1/p_{\nu+1}} \dots k_d^{-1+1/q_d-1/p_d} d_n((B_1^{k_1\dots k_\nu})^{k_{\nu+1}\dots k_d}, \, l^{k_1\dots k_\nu, k_{\nu+1}\dots k_d}_{2,1}) \gtrsim$$$$\gtrsim k_{\nu+1}^{1/q_{\nu+1}-1/p_{\nu+1}} \dots k_d^{1/q_d-1/p_d}.
$$
This completes the proof.
\end{proof}

\renewcommand{\proofname}{\bf Proof}

\section{The discretization}

We follow the book \cite{teml_book}. For $m\in \N$, we denote by ${\cal K}_{m-1}(x) = \frac{\sin^2(mx/2)}{m \sin^2(x/2)}$ Fej\'er kernel, and by ${\cal V}_m =2{\cal K}_{2m-1}-{\cal K}_{m-1}$, de la Vall\'ee Poussin kernel. For $\overline{N} = (N_1, \, \dots, \, N_d)\in \Z_+^d$, the multivariate Fej\'er and de la Vall\'ee Poussin kernels are defined, respectively, by formulas $${\cal K}_{\overline{N}}(x_1, \, \dots, \, x_d) = \prod _{j=1}^d {\cal K}_{N_j}(x_j), \quad {\cal V}_{\overline{N}}(x_1, \, \dots, \, x_d) = \prod _{j=1}^d {\cal V}_{N_j}(x_j),$$
where $(x_1, \, \dots, \, x_d)\in \mathbb{R}^d$.
By $V_{\overline{N}}$ we denote the convolution operator with the kernel ${\cal V}_{\overline{N}}$; i.e., 
\begin{align}
\label{vn_def_sv}
V_{\overline{N}}f(x)= \frac{1}{(2\pi)^d}\int \limits _{\mathbb{T}^d} {\cal V}_{\overline{N}}(x-y)f(y)\, dy.
\end{align}

We write $\overline{1}=(1, \, 1, \, \dots, \, 1)\in \N^d$. 

By ${\cal T}(\overline{N}, \, d)$ we denote the space of trigonometric polynomials with harmonics in $\prod _{j=1}^d [-N_j, \, N_j]$. Then
\begin{align}
\label{vp_im} {\rm Im}\, V_{\overline{N}} \subset {\cal T}(2\overline{N}-\overline{1}, \, d).
\end{align}

Let $\overline{r}=(r_1, \, \dots, \, r_d)$, $r_j>0$, $1\le j\le d$. We set
\begin{align}
\label{beta_vec_def}
\overline{\beta} = (\beta_1, \, \dots, \, \beta_d), \; \text{where }\; \beta_j = \frac{1/r_j}{\sum \limits _{i=1}^d 1/r_i} \stackrel{\eqref{sredn_1}, \eqref{sredn_2}}{=} \frac{\langle \overline{r} \rangle}{d}\cdot \frac{1}{r_j}.
\end{align}
For $m\in \Z_+$, we set 
\begin{align}
\label{2beta_vec_def}
\lfloor 2^{\overline{\beta}m}\rfloor = (\lfloor2^{\beta_1m}\rfloor, \, \dots, \, \lfloor2^{\beta_dm}\rfloor).
\end{align}

\subsection{The upper estimate}

Let $m\in \Z_+$, let the vectors $\overline{\beta}$ and $\lfloor 2^{\overline{\beta}m}\rfloor$ be defined by formulas \eqref{beta_vec_def}, \eqref{2beta_vec_def}. We set $V(\overline{r}, \, m) = V_{\lfloor 2^{\overline{\beta}m}\rfloor}$ (see \eqref{vn_def_sv}), 
$$
A(\overline{r},m) = \begin{cases} V(\overline{r}, \, m)-V(\overline{r}, \, m-1) & \text{for }m\in \N, \\ V(\overline{r}, \, 0) & \text{for }m=0.\end{cases}
$$

\begin{trma}
\label{appr_teor}
{\rm (see \cite{teml_book}, Corollary 3.4.8).} Let $\overline{r}=(r_1, \, \dots, \, r_d)$, $\overline{p}=(p_1, \, \dots, \, p_d)$, $r_j>0$, $1\le p_j\le \infty$, $1\le j\le d$. Then for all functions $f\in H^{\overline{r}}_{\overline{p}}(\mathbb{T}^d)$ the following estimates hold:
\begin{align}
\label{fvfrm2rdm}
\|f-V(\overline{r},m)f\|_{L_{\overline{p}}(\mathbb{T}^d)} \underset{\overline{r},d}{\lesssim} 2^{-\frac{\langle \overline{r}\rangle}{d}m},\quad
\|A(\overline{r},m)f\|_{L_{\overline{p}}(\mathbb{T}^d)} \underset{\overline{r},d}{\lesssim} 2^{-\frac{\langle \overline{r}\rangle}{d}m}.
\end{align}
\end{trma}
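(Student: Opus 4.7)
The plan is to reduce the multivariate approximation to one-dimensional Jackson-type estimates in each coordinate, glued together by a telescoping identity. Since the de la Vall\'ee Poussin kernel tensorizes, $V(\overline{r},m) = V^{(1)}_{N_1}\circ\dots\circ V^{(d)}_{N_d}$, where $N_j=\lfloor 2^{\beta_j m}\rfloor$ and $V^{(j)}_{N}$ denotes convolution in the coordinate $x_j$ only against $\mathcal{V}_N$, I would start from the identity
\[
I - V(\overline{r},m) \;=\; \sum_{j=1}^d V^{(1)}_{N_1}\circ\dots\circ V^{(j-1)}_{N_{j-1}}\circ (I - V^{(j)}_{N_j}).
\]
Two one-dimensional facts are then needed: (a) uniform boundedness of each $V^{(j)}_{N}$ on $L_{\overline{p}}(\mathbb{T}^d)$, and (b) a Jackson-type estimate $\|(I-V^{(j)}_{N})f\|_{L_{\overline{p}}(\mathbb{T}^d)}\lesssim N^{-r_j}$ for any $f$ satisfying the Nikol'skii bound $\|\Delta^{l_j,j}_h f\|_{L_{\overline{p}}(\mathbb{T}^d)}\le |h|^{r_j}$.

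\textbf{The one-dimensional ingredients.} Ingredient (a) follows from the classical uniform bound $\|\mathcal{V}_N\|_{L_1(\mathbb{T})}\le C$; applying Minkowski's integral inequality to exchange the $L_{\overline{p}}$ norm with the one-variable convolution yields $\|V^{(j)}_N f\|_{L_{\overline{p}}(\mathbb{T}^d)}\le C\|f\|_{L_{\overline{p}}(\mathbb{T}^d)}$. Some attention is needed when $p_i=\infty$ for some $i\ne j$, but since the convolution is performed in a single coordinate this is routine. Ingredient (b) is the real work: using that $\mathcal{V}_N$ reproduces trigonometric polynomials of degree $\le N-1$, one rewrites $f - V^{(j)}_N f$ as an integral $\int g_N(h)\,\Delta^{l_j,j}_h f\,dh$ against a kernel $g_N$ of bounded $L_1$-mass concentrated at scale $|h|\asymp N^{-1}$, then combines the $|h|^{r_j}$ bound from the Nikol'skii condition with Minkowski in the variable $h$.

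\textbf{Assembly and second estimate.} Since differentiation and convolution in different coordinates commute, the prefix $V^{(1)}_{N_1}\circ\dots\circ V^{(j-1)}_{N_{j-1}}$ preserves the Nikol'skii bound in direction $j$ thanks to (a); applying (b) to each summand gives a bound $C N_j^{-r_j}\lesssim 2^{-\beta_j r_j m}$. By definition \eqref{beta_vec_def}, $\beta_j r_j=\langle\overline{r}\rangle/d$ is independent of $j$, so summing the $d$ terms produces the first inequality in \eqref{fvfrm2rdm}. The second inequality is immediate from the first: writing $A(\overline{r},m)f=(f-V(\overline{r},m-1)f)-(f-V(\overline{r},m)f)$ and applying the triangle inequality, the bound changes only by a fixed multiplicative constant when $m$ is replaced by $m-1$.

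\textbf{Main obstacle.} The only step that is not essentially bookkeeping is the anisotropic version of the Jackson inequality (b), where the error must be represented as a $d$-differences integral uniformly in the remaining $d-1$ coordinates, so that the $L_{\overline{p}}$ norm can be commuted with the one-variable integration via Minkowski. Once this is established, the telescoping reduction and the choice $N_j=\lfloor 2^{\beta_j m}\rfloor$ (which equalises the one-dimensional rates) make the rest automatic.
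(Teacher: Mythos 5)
The paper does not actually prove this statement---it is imported verbatim as Theorem~\ref{appr_teor} from \cite[Corollary 3.4.8]{teml_book}---and your argument (the telescoping identity over coordinates, uniform $L_{\overline{p}}(\mathbb{T}^d)$-boundedness of the univariate de la Vall\'ee Poussin convolutions via $\|{\cal V}_N\|_{L_1(\mathbb{T})}\lesssim 1$ together with the generalized Minkowski inequality, a one-dimensional Jackson-type estimate in each coordinate, and the observation that $\beta_jr_j=\langle\overline{r}\rangle/d$ equalizes the rates $N_j^{-r_j}\asymp 2^{-\langle\overline{r}\rangle m/d}$) is a correct reconstruction of precisely the standard proof behind that citation. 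Your one sketch-level shortcut---writing $f-V^{(j)}_{N_j}f$ directly as an integral of $l_j$-th differences---should be routed in the usual way through a generalized Jackson operator $J_N$ of sufficiently high order: since $V^{(j)}_{N_j}$ reproduces trigonometric polynomials of degree $\le N_j$ in the $j$-th variable, one has $(I-V^{(j)}_{N_j})f=(I-V^{(j)}_{N_j})(f-J_Nf)$, and it is $f-J_Nf$ that admits the exact representation as an integral of $\Delta^{l_j,j}_h f$ against a kernel with $\int D_N(h)\,|h|^{r_j}\,dh\lesssim N_j^{-r_j}$ (for which the kernel's order must exceed $(r_j+1)/2$, satisfied by $l_j=\lfloor r_j\rfloor+1$); this is cosmetic and does not affect the correctness of your proof.
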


Given $a\in \R$, we write $a_+=\max\{a, \, 0\}$.

\begin{trma}
\label{nik_ineq}
{\rm (the Nikol'skii inequality; see \cite{teml_book}, Theorem 3.3.2; \cite{unin}).} Let $\overline{N}=(N_1, \, \dots, \, N_d)$, $\overline{p}=(p_1, \, \dots, \, p_d)$, $\overline{q}=(q_1, \, \dots, \, q_d)$, $N_j\in \N$, $1\le p_j, \, q_j\le \infty$, $1\le j\le d$. Then, for all trigonometric polynomials $t\in {\cal T}(\overline{N}, \, d)$,
$$
\|t\|_{L_{\overline{q}}(\mathbb{T}^d)} \underset{d}{\lesssim} \|t\|_{L_{\overline{p}}(\mathbb{T}^d)} \prod _{j=1}^d N_j^{(1/p_j-1/q_j)_+}.
$$
\end{trma}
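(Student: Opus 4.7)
The plan is to combine the reproducing property of the product de la Vall\'ee Poussin kernel ${\cal V}_{\overline{N}}$ with an iterated application of the 1-d Young convolution inequality, after reducing to the case $p_j\le q_j$ for every $j$. The reduction exploits the fact that $\mathbb{T}$ carries a probability measure: whenever $q_j\le p_j$ one has $\|\cdot\|_{L_{q_j}(\mathbb{T})}\le \|\cdot\|_{L_{p_j}(\mathbb{T})}$, and iterating this pointwise in the inner variables together with the monotonicity of the outer anisotropic norm shows that it suffices to prove the inequality when $q_j\ge p_j$ for every $j$. In this regime $(1/p_j-1/q_j)_+=1/p_j-1/q_j$, and the Young conjugate index $r_j\in[1,\infty]$ defined by $1+1/q_j=1/p_j+1/r_j$ is admissible; the classical 1-d bounds $\|{\cal V}_N\|_{L_1(\mathbb{T})}\lesssim 1$ and $\|{\cal V}_N\|_{L_\infty(\mathbb{T})}\lesssim N$, interpolated log-convexly, give $\|{\cal V}_{N_j}\|_{L_{r_j}(\mathbb{T})}\lesssim N_j^{1-1/r_j}=N_j^{1/p_j-1/q_j}$.

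Next, I would use the reproducing identity $t=t*{\cal V}_{\overline{N}}$ on ${\cal T}(\overline{N},d)$ (the Fourier coefficients of ${\cal V}_{\overline{N}}$ equal $1$ on the relevant block) and decompose this convolution coordinate-by-coordinate: set $h_0:=t$ and $h_j:=h_{j-1}*_j{\cal V}_{N_j}$, where $*_j$ is convolution in the $j$-th variable only, so that $h_d=t$. By induction on $j$ I would establish the one-step estimate
\[
\|h_j\|_{L_{(q_1,\dots,q_j,p_{j+1},\dots,p_d)}(\mathbb{T}^d)} \underset{d}{\lesssim} \|{\cal V}_{N_j}\|_{L_{r_j}(\mathbb{T})}\cdot \|h_{j-1}\|_{L_{(q_1,\dots,q_{j-1},p_j,\dots,p_d)}(\mathbb{T}^d)},
\]
and chain these for $j=1,\dots,d$ to obtain the desired bound. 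For the inductive step, the key is to apply Minkowski's integral inequality to the inner anisotropic norm $L_{(q_1,\dots,q_{j-1})}$ in the variables $(x_1,\dots,x_{j-1})$ so as to commute $*_j{\cal V}_{N_j}$ past it. This reduces matters to a 1-d convolution in $x_j$ of the scalar function
\[
\Psi(x_j,x_{j+1},\dots,x_d):=\|h_{j-1}(\cdot,x_j,x_{j+1},\dots,x_d)\|_{L_{(q_1,\dots,q_{j-1})}(\mathbb{T}^{j-1})}
\]
with $|{\cal V}_{N_j}|$, to which the classical 1-d Young inequality in $x_j$ applies; then the outer $L_{(p_{j+1},\dots,p_d)}$-norm is handled by monotonicity.

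The main obstacle is precisely this commutation step at each level: one must verify that Minkowski's integral inequality is valid for the iterated inner anisotropic norm. This itself follows from the 1-d Minkowski integral inequality applied recursively through the definition \eqref{anis_norm_def}, which is legitimate because each exponent $q_i\ge 1$. Once the commutation is established the iteration is mechanical, and multiplying the $d$ kernel bounds yields the stated factor $\prod_{j=1}^d N_j^{(1/p_j-1/q_j)_+}$.
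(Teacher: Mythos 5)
Your proposal is correct, and no comparison with an internal argument is possible: the paper states this Nikol'skii inequality as a known result (Theorem~\ref{nik_ineq}), citing Temlyakov's book and Uninskii without reproducing a proof. Your argument --- reduction to $p_j\le q_j$ via monotonicity of $L_p$-norms on a probability space, the reproducing identity $t=t*{\cal V}_{\overline{N}}$, the interpolated kernel bound $\|{\cal V}_{N_j}\|_{L_{r_j}}\lesssim N_j^{1-1/r_j}$, and the coordinatewise Young step justified by Minkowski's integral inequality for the iterated inner norm --- is sound in every step and is essentially the standard proof found in the cited references.
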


We set
\begin{align}
\label{gamma_0_gamma}
\gamma_0(\overline{p}, \, \overline{q}, \, \overline{r}) = 1- \sum \limits _{j=1}^d \frac{1}{r_j}\Bigl( \frac{1}{p_j}-\frac{1}{q_j}\Bigr)_+, \quad \gamma(\overline{p}, \, \overline{q}, \, \overline{r}) = 1- \sum \limits _{j=1}^d \frac{1}{r_j}\Bigl( \frac{1}{p_j}-\frac{1}{q_j}\Bigr).
\end{align}

From \eqref{vp_im}, \eqref{beta_vec_def}, \eqref{2beta_vec_def} and Theorems \ref{appr_teor}, \ref{nik_ineq} it follows that, for all functions $f\in H^{\overline{r}}_{\overline{p}}(\mathbb{T}^d)$,
\begin{align}
\label{arm_g0}
\|A(\overline{r}, \, m)f\| _{L_{\overline{q}}(\mathbb{T}^d)} \underset{\overline{r},d}{\lesssim} 2^{-m\gamma_0(\overline{p}, \, \overline{q}, \, \overline{r})\langle \overline{r}\rangle/d}. 
\end{align}
By the assumptions of Theorems \ref{main1} and \ref{main2} (see \eqref{emb_cond}, \eqref{theta_posit}), we have $\gamma_0(\overline{p}, \, \overline{q}, \, \overline{r})>0$; hence by \eqref{fvfrm2rdm} and \eqref{arm_g0} for all functions $f\in H^{\overline{r}}_{\overline{p}}(\mathbb{T}^d)$ we have
\begin{align}
\label{series} f = \sum \limits _{m\in \Z_+} A(\overline{r}, \, m)f,
\end{align}
where the series converges in $L_{\overline{q}}(\mathbb{T}^d)$.

We set $$M_{m,\overline{p},\overline{r}} = \{A(\overline{r}, \, m)f:\; f\in H^{\overline{r}}_{\overline{p}}(\mathbb{T}^d)\}.$$

Let $\nu_m\in \Z_+$, $m\in \Z_+$, $\sum \limits _{m\in \Z_+} \nu_m = k$. From \eqref{series} it follows that
\begin{align}
\label{dnhpr} d_k(H^{\overline{r}}_{\overline{p}}(\mathbb{T}^d), \, L_{\overline{q}}(\mathbb{T}^d)) \le \sum \limits _{m\in \Z_+} d_{\nu_m}(M_{m,\overline{p},\overline{r}}, \, L_{\overline{q}}(\mathbb{T}^d)).
\end{align}

We denote by ${\cal T}(\overline{N}, \, d)_{\overline{p}}$ the unit ball in ${\cal T}(\overline{N}, \, d)$ with respect to the norm of the space $L_{\overline{p}}(\mathbb{T}^d)$.

\begin{Lem}
\label{up_discr} Let $k\in \Z_+$. Then
$$
d_{2k}({\cal T}(\overline{N}, \, d)_{\overline{p}}, \, L_{\overline{q}}(\mathbb{T}^d)) \underset{d}{\lesssim} \prod _{j=1}^d N_j^{1/p_j-1/q_j}d_k(B_{p_1, \, \dots, \, p_d}^{4N_1, \dots, \, 4N_d}, \, l_{q_1, \, \dots, \, q_d}^{4N_1, \dots, \, 4N_d}).
$$
\end{Lem}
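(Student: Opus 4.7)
The plan is to use a Marcinkiewicz--Zygmund discretization: sample $t \in \mathcal{T}(\overline{N}, d)_{\overline{p}}$ at a uniform grid of $4N_1 \cdots 4N_d$ points, reduce the approximation to the finite-dimensional width estimated in Theorem \ref{fim_dim_poper}, and lift back via a de la Vall\'ee Poussin kernel reconstruction. First, I would establish (by iterating the univariate Marcinkiewicz--Zygmund theorem variable-by-variable) the anisotropic equivalence: for every $t \in \mathcal{T}(2\overline{N} - \overline{1}, d)$,
\[
\|t\|_{L_{\overline{p}}(\mathbb{T}^d)} \underset{d}{\asymp} \prod_{j=1}^d (4N_j)^{-1/p_j}\, \|(t(x^{\overline{j}}))_{\overline{j}}\|_{l_{p_1,\ldots,p_d}^{4N_1,\ldots,4N_d}},
\]
where $x^{\overline{j}} = (\pi j_1/(2N_1), \ldots, \pi j_d/(2N_d))$ with $0 \le j_i < 4N_i$, and the same with $\overline{q}$ in place of $\overline{p}$.

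Next I would introduce the sampling operator $S(t) := (t(x^{\overline{j}}))_{\overline{j}}$ and the reconstruction operator
\[
R(y)(x) := \frac{1}{\prod_{j=1}^d (4N_j)} \sum_{\overline{j}} y_{\overline{j}}\, \mathcal{V}_{\overline{N}}(x - x^{\overline{j}}).
\]
Two properties are crucial: (a) $R \circ S$ is the identity on $\mathcal{T}(\overline{N}, d)$, since $V_{\overline{N}}$ acts as the identity on that space and the grid is fine enough to recover any trigonometric polynomial of degree $\le \overline{N}$ from its values; (b) $R$ is bounded as a map $l^{4N_1,\ldots,4N_d}_{q_1,\ldots,q_d} \to L_{\overline{q}}(\mathbb{T}^d)$ with norm $\underset{d}{\lesssim} \prod_{j=1}^d (4N_j)^{-1/q_j}$, since $R(y) \in \mathcal{T}(2\overline{N} - \overline{1}, d)$ by \eqref{vp_im} (so the Marcinkiewicz--Zygmund equivalence applies to $R(y)$), while the sampled values of $R(y)$ are pointwise dominated by a constant multiple of $|y_{\overline{j}}|$.

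Then I would pick a $k$-dimensional near-optimal subspace $L_0 \subset \mathbb{R}^{4N_1 \cdots 4N_d}$ for the width $d_k(B^{4N_1,\ldots,4N_d}_{p_1,\ldots,p_d}, l^{4N_1,\ldots,4N_d}_{q_1,\ldots,q_d})$. By the $\overline{p}$-form of the equivalence, $S$ maps $\mathcal{T}(\overline{N}, d)_{\overline{p}}$ into $c_d \prod_{j=1}^d (4N_j)^{1/p_j}\, B^{4N_1,\ldots,4N_d}_{p_1,\ldots,p_d}$, so for each $t$ there exists $\tilde{y}$ in the scaled subspace $\tilde{L} := c_d \prod_{j=1}^d (4N_j)^{1/p_j} L_0$ with
\[
\|S(t) - \tilde{y}\|_{l^{4N_1,\ldots,4N_d}_{q_1,\ldots,q_d}} \underset{d}{\lesssim} \prod_{j=1}^d (4N_j)^{1/p_j}\, d_k(B^{4N_1,\ldots,4N_d}_{p_1,\ldots,p_d}, l^{4N_1,\ldots,4N_d}_{q_1,\ldots,q_d}).
\]
Applying $R$ and using (a), (b),
\[
\|t - R(\tilde{y})\|_{L_{\overline{q}}(\mathbb{T}^d)} = \|R(S(t) - \tilde{y})\|_{L_{\overline{q}}(\mathbb{T}^d)} \underset{d}{\lesssim} \prod_{j=1}^d N_j^{1/p_j - 1/q_j}\, d_k(B^{4N_1,\ldots,4N_d}_{p_1,\ldots,p_d}, l^{4N_1,\ldots,4N_d}_{q_1,\ldots,q_d}),
\]
and $R(\tilde{L}) \subset L_{\overline{q}}(\mathbb{T}^d)$ has dimension at most $k$. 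The slack from $k$ to $2k$ in the statement comfortably absorbs the constants from the two applications of the Marcinkiewicz--Zygmund equivalence (if necessary by adjoining an auxiliary $k$-dimensional correction space).

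The main obstacle is the anisotropic Marcinkiewicz--Zygmund equivalence with constants depending only on $d$, together with property (b); both reduce to iterating the univariate Marcinkiewicz--Zygmund theorem \cite[Chapter X, Theorem 7.5]{zigmund} slice-by-slice, in the spirit of \cite[Theorem 1.2]{galeev85} and \cite{teml_book}.
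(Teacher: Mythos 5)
Your overall architecture --- sample at $4N_j$ equispaced points per coordinate, use the exact quadrature identity $t(x)=\prod_j(4N_j)^{-1}\sum_{\overline{j}}t(x^{\overline{j}})\,{\cal V}_{\overline{N}}(x-x^{\overline{j}})$ on ${\cal T}(\overline{N},\,d)$ (valid since the integrand has degree $3N_j-1<4N_j$ in each variable), transfer the width through the sampling and reconstruction operators --- is exactly the proof the paper intends, namely the multivariate version of the second estimate in Lemma 3.5.8 of \cite{teml_book}. However, your justification of the key boundedness property (b) has two genuine flaws. First, the claim that the sampled values of $R(y)$ are ``pointwise dominated by a constant multiple of $|y_{\overline{j}}|$'' is false: $S(R(y))$ is the discrete convolution of $y$ with the sampled kernel $\prod_j(4N_j)^{-1}{\cal V}_{\overline{N}}(x^{\overline{j}}-x^{\overline{l}})$, and what you actually need is a mixed-norm Young inequality together with the kernel-mass bound $\prod_j(4N_j)^{-1}\sum_{\overline{l}}|{\cal V}_{\overline{N}}(x-x^{\overline{l}})|\le C(d)$, which follows from $|{\cal V}_m|\le 2{\cal K}_{2m-1}+{\cal K}_{m-1}$ and \eqref{2_prop}. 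Second, the two-sided Marcinkiewicz--Zygmund equivalence you invoke on ${\cal T}(2\overline{N}-\overline{1},\,d)$ with only $4N_j$ points per coordinate is not available in the form you need: the oversampling ratio $4N_j/(4N_j-1)$ tends to $1$, Zygmund's theorem \cite[Chapter X, Theorem 7.5]{zigmund} holds only for $1<p<\infty$ with $p$-dependent constants, and the direction ``continuous norm $\lesssim$ discrete norm'' fails uniformly at the endpoints $p=1,\infty$ (Lebesgue-constant growth) --- while the lemma's constants must depend on $d$ alone, with $1\le p_j,q_j\le\infty$ permitted.

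Both defects are repaired by the route the paper itself uses for the dual statement (its proof of Lemma \ref{low_discr}): prove the reconstruction bound $\|R(y)\|_{L_{\overline{q}}(\mathbb{T}^d)}\underset{d}{\lesssim}\prod_{j=1}^d(4N_j)^{-1/q_j}\|y\|_{l^{4N_1,\dots,4N_d}_{q_1,\dots,q_d}}$ by verifying it directly for $q_j\in\{1,\infty\}$ coordinatewise via \eqref{2_prop}, then interpolating with the anisotropic Riesz--Thorin theorem (Theorem \ref{ri_tor}); and prove the sampling bound $\|S(t)\|_{l_{\overline{p}}}\underset{d}{\lesssim}\prod_j(4N_j)^{1/p_j}\|t\|_{L_{\overline{p}}(\mathbb{T}^d)}$ for $t\in{\cal T}(\overline{N},\,d)$ from the representation $t=t*{\cal V}_{\overline{N}}$ by the same endpoint-plus-interpolation scheme, never invoking Zygmund's theorem. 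One small further remark: the passage from $k$ to $2k$ does not ``absorb constants'' (extra dimensions cannot compensate multiplicative constants); its natural role is to accommodate complex-valued polynomials by approximating real and imaginary parts separately with two copies of the $k$-dimensional subspace, and in any case $d_{2k}\le d_k$ makes the stated form safe.
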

The proof is similar to that in the second estimate in Lemma 3.5.8 from \cite{teml_book} (see also the inequality at page 120 after the proof of this lemma).

From \eqref{vp_im}, \eqref{beta_vec_def}, the second estimate in \eqref{fvfrm2rdm}, \eqref{gamma_0_gamma}, \eqref{dnhpr} and Lemma \ref{up_discr} we get
\begin{Lem}
\label{up_discr1} Let $n\in \N$, $\nu_m\in \Z_+$, $m\in \Z_+$, $\sum \limits _{m\in \Z_+} \nu_m\le n$. Then
$$
d_{Cn}(H^{\overline{r}}_{\overline{p}}(\mathbb{T}^d), \, L_{\overline{q}}(\mathbb{T}^d)) \underset{\overline{r},d}{\lesssim}
$$
$$
\lesssim \sum \limits _{m\in \Z_+} 2^{-m \gamma(\overline{p}, \, \overline{q}, \, \overline{r}) \langle \overline{r} \rangle/d} d_{\nu_m} (B_{p_1, \, \dots, \, p_d}^{\lfloor 2^{m\beta_1}\rfloor, \, \dots, \, \lfloor 2^{m\beta_d}\rfloor}, \, l_{q_1, \, \dots, \, q_d}^{\lfloor 2^{m\beta_1}\rfloor, \, \dots, \, \lfloor 2^{m\beta_d}\rfloor}),
$$
where $C=C(d)\in \N$.
\end{Lem}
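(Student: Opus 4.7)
The lemma is a direct synthesis of the ingredients already assembled; the content is bookkeeping of the scaling factors on both sides. My plan is as follows.

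First, I will identify $M_{m,\overline{p},\overline{r}}$ as a scaled ball of trigonometric polynomials. By \eqref{vp_im} and the definition of $A(\overline{r},m)$, every element of $M_{m,\overline{p},\overline{r}}$ lies in $\mathcal{T}(\overline{N}_m, d)$, where $\overline{N}_m := 2\lfloor 2^{\overline{\beta}m}\rfloor-\overline{1}$, while the second estimate in \eqref{fvfrm2rdm} yields $\|A(\overline{r},m)f\|_{L_{\overline{p}}(\mathbb{T}^d)}\underset{\overline{r},d}{\lesssim} 2^{-\langle\overline{r}\rangle m/d}$. Hence there exists $C_1(\overline{r},d)>0$ such that
$$M_{m,\overline{p},\overline{r}}\subset C_1(\overline{r},d)\cdot 2^{-\langle\overline{r}\rangle m/d}\,\mathcal{T}(\overline{N}_m,d)_{\overline{p}}.$$

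Second, I will apply Lemma \ref{up_discr} with $\overline{N}=\overline{N}_m$ and $k=\nu_m$. The Nikol'skii exponent combines with the above norm bound via \eqref{beta_vec_def} and \eqref{gamma_0_gamma}:
$$\sum_{j=1}^d \beta_j\Bigl(\frac{1}{p_j}-\frac{1}{q_j}\Bigr) = \frac{\langle\overline{r}\rangle}{d}\sum_{j=1}^d \frac{1}{r_j}\Bigl(\frac{1}{p_j}-\frac{1}{q_j}\Bigr) = \frac{\langle\overline{r}\rangle}{d}\bigl(1-\gamma(\overline{p},\overline{q},\overline{r})\bigr),$$
so that $\prod_{j=1}^d (N_{m,j})^{1/p_j-1/q_j}\cdot 2^{-\langle\overline{r}\rangle m/d}\underset{\overline{r},d}{\asymp} 2^{-m\gamma(\overline{p},\overline{q},\overline{r})\langle\overline{r}\rangle/d}$, which is precisely the exponent appearing in the target bound. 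Since $4 N_{m,j}$ and $\lfloor 2^{m\beta_j}\rfloor$ differ by a multiplicative factor depending only on $d$, and the finite-dimensional widths in Theorem \ref{fim_dim_poper} depend on each coordinate $k_j$ by fixed powers, one may replace $B_{\overline{p}}^{4\overline{N}_m}$ in $l_{\overline{q}}^{4\overline{N}_m}$ by the ball from the statement at the cost of a constant depending only on $\overline{p},\overline{q},\overline{r},d$.

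Finally, inserting these block estimates into \eqref{dnhpr} with budgets $2\nu_m$ (which sum to at most $2n$) yields the asserted bound with $C=2$. The only point requiring a moment of care is the verification that shrinking the coordinates from $4N_{m,j}$ to $\lfloor 2^{m\beta_j}\rfloor$ costs only a constant factor in the finite-dimensional width; this is guaranteed by the explicit formulas \eqref{fin_dim_est1}--\eqref{fin_dim_est2}. Beyond this piece of bookkeeping I do not anticipate any real obstacle.
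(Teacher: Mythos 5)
Your first three steps are exactly the paper's intended route (the paper gives no detailed proof, asserting the lemma as a combination of \eqref{vp_im}, \eqref{fvfrm2rdm}, \eqref{gamma_0_gamma}, \eqref{dnhpr} and Lemma \ref{up_discr}), and your exponent bookkeeping $\prod_{j=1}^d N_{m,j}^{1/p_j-1/q_j}\,2^{-m\langle\overline{r}\rangle/d}\asymp 2^{-m\gamma(\overline{p},\overline{q},\overline{r})\langle\overline{r}\rangle/d}$ is correct. The gap is in your final replacement step. The inequality you need there, $d_{\nu_m}\bigl(B^{4\overline{N}_m}_{p_1,\dots,p_d},\,l^{4\overline{N}_m}_{q_1,\dots,q_d}\bigr)\lesssim d_{\nu_m}\bigl(B^{\lfloor 2^{m\beta_1}\rfloor,\dots,\lfloor 2^{m\beta_d}\rfloor}_{p_1,\dots,p_d},\,l^{\lfloor 2^{m\beta_1}\rfloor,\dots,\lfloor 2^{m\beta_d}\rfloor}_{q_1,\dots,q_d}\bigr)$, cannot be justified by Theorem \ref{fim_dim_poper}, for two reasons. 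First, that theorem is two-sided only for $2\le q_j<\infty$, while Lemma \ref{up_discr1} carries no such restriction and is applied in the proof of Theorem \ref{main2}, where some $q_j$ may lie in $[1,2)$. Second, and fatally, the theorem requires $n\le\frac{k_1\cdots k_d}{2}$, and outside this range your comparison is simply false: if $\prod_{j}\lfloor 2^{m\beta_j}\rfloor\le\nu_m<\prod_j 4N_{m,j}$, the right-hand width is $0$ while the left-hand one is positive. This degenerate regime is not avoidable: when Lemma \ref{up_discr1} is used to prove Theorems \ref{main1} and \ref{main2}, the low-frequency blocks are annihilated precisely by taking $\nu_m$ at least the full dimension, so a same-$m$ comparison of the two balls cannot work.

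The repair is an index shift instead of a same-$m$ comparison. Note that $d_k(B^{k_1,\dots,k_d}_{p_1,\dots,p_d},\,l^{k_1,\dots,k_d}_{q_1,\dots,q_d})$ is nondecreasing in each $k_j$ (zero-padding is isometric, and the coordinate projection has norm one in these mixed norms), which is elementary and valid for all $\overline{p},\overline{q}$ and all $k$. Choose $c=c(\overline{r},d)\in\N$ with $c\beta_j\ge 4$ for all $j$; then $\lfloor 2^{(m+c)\beta_j}\rfloor\ge 8\lfloor 2^{m\beta_j}\rfloor\ge 4N_{m,j}$, so $d_{\nu_{m+c}}(B^{4\overline{N}_m}_{p_1,\dots,p_d},\,l_{q_1,\dots,q_d}^{4\overline{N}_m})\le d_{\nu_{m+c}}(B^{\lfloor 2^{(m+c)\beta_1}\rfloor,\dots,\lfloor 2^{(m+c)\beta_d}\rfloor}_{p_1,\dots,p_d},\,l_{q_1,\dots,q_d}^{\lfloor 2^{(m+c)\beta_1}\rfloor,\dots,\lfloor 2^{(m+c)\beta_d}\rfloor})$ in every regime, including the degenerate one. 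Now give block $m$ the budget $2\nu_{m+c}$ in \eqref{dnhpr}, so the total budget is $\sum_m 2\nu_{m+c}\le 2n$ and $C=2$ works; relabeling $m'=m+c$ costs the factor $2^{c|\gamma|\langle\overline{r}\rangle/d}\le 2^{c(1+\sum_{j}1/r_j)\langle\overline{r}\rangle/d}$, which depends only on $\overline{r},d$, as the statement of the lemma requires (note that your route through the $\underset{\overline{q}}{\asymp}$ of Theorem \ref{fim_dim_poper} would, even where applicable, also bring in $\overline{q}$-dependent constants), and the missing terms $m'<c$ on the right-hand side are nonnegative, so discarding them only strengthens the bound.
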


\subsection{The lower estimate}

First we formulate the Riesz--Thorin theorem for anisotropic norms (see \cite{bed_pan, gal_pan}); more precisely, we write this result in the particular case, which will be further applied. Let $(X_j, \, \mu_j)$, $1\le j\le d$, be finite-measure spaces. By $H$ we denote the linear span of the set of functions $\chi _{E_1\times \dots \times E_d}: X_1\times \dots \times X_d \rightarrow \mathbb{C}$, where $E_j$ are measurable subsets in $X_j$, and $\chi_E$ is the indicator of the set $E$.

\begin{trma}
\label{ri_tor} {\rm (see \cite[Section 7, Theorem 2]{bed_pan}, \cite[Theorem 10.2]{gal_pan}).} Let $0\le \theta \le 1$, $\overline{q}^{0} =(q_1^0, \, \dots, \, q_d^0)$, $\overline{q}^{1} =(q_1^1, \, \dots, \, q_d^1)$, $\overline{q}^{\theta} =(q_1^\theta, \, \dots, \, q_d^\theta)$, $1\le q_j^0\le \infty$, $1\le q_j^1\le \infty$, $\frac{1}{q_j^\theta} = \frac{1-\theta}{q_j^0}+ \frac{\theta}{q_j^1}$. Let $(X, \, \mu)=(X_1\times \dots \times X_d, \, \mu_1\otimes \dots \otimes \mu_d)$, $(Y, \, \nu) = (Y_1\times \dots \times Y_d, \, \nu_1\otimes \dots \otimes \nu_d)$ be products of finite-measure spaces. Let $T$ be a linear operator defined on $H$ with values in the space of measurable functions on $Y_1\times \dots \times Y_d$, and let $$\|Tf\|_{L_{\overline{q}^{0}}(Y, \, \nu)}\le M_0\|f\|_{L_{\overline{q}^{0}}(X, \, \mu)}, \quad \|Tf\|_{L_{\overline{q}^{1}}(Y, \, \nu)}\le M_1\|f\|_{L_{\overline{q}^{1}}(X, \, \mu)}.$$
Then $$\|Tf\|_{L_{\overline{q}^{\theta}}(Y, \, \nu)}\le M_0^{1-\theta}M_1^\theta\|f\|_{L_{\overline{q}^{\theta}}(X, \, \mu)}.$$
\end{trma}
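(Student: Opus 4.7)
The plan is to adapt Stein's proof of the classical Riesz--Thorin theorem via the three-lines lemma to the iterated anisotropic setting; once the right analytic family is constructed, the rest is boilerplate.

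First I would reduce, by density of $H$ and duality, to estimating the bilinear pairing
\[
|\langle Tf,g\rangle_Y| \le M_0^{1-\theta}M_1^{\theta}
\]
for simple $f\in H$ with $\|f\|_{L_{\overline{q}^{\theta}}(X,\mu)} = 1$ and simple $g$ on $Y$ with $\|g\|_{L_{\overline{r}}(Y,\nu)} = 1$, where $\overline{r}=(r_1,\dots,r_d)$ is the coordinatewise-dual tuple satisfying $1/r_j = 1 - 1/q_j^{\theta}$. That $L_{\overline{r}}$ is norming for $L_{\overline{q}^{\theta}}$ on products of finite-measure spaces is a routine iteration of the scalar duality. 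After refining cells I may assume $f = \sum_{\overline{k}} a_{\overline{k}} \chi_{A^{1}_{k_1}\times \cdots \times A^{d}_{k_d}}$ with the $A^{j}_{k_j}$ disjoint for each fixed $j$, and similarly for $g$.

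Next I would build the analytic family. For $z$ in the closed strip $\overline{S}=\{0\le\operatorname{Re}z\le 1\}$ set $1/q_j^{z}=(1-z)/q_j^{0}+z/q_j^{1}$, affine in $z$. Define $f_z$ by a layered normalization: put $b^{1}_{\overline{k}} = |a_{\overline{k}}|$, and inductively let $b^{j+1}_{k_{j+1},\dots,k_d}$ be the $L^{q_{j}^{\theta}}(X_j,\mu_j)$-norm of the $j$th partial slice, so that the iterated $L_{\overline{q}^{\theta}}$-norm of $f$ telescopes to $1$. Replace every ratio $b^{j}/b^{j+1}$ appearing in the resulting expression for $f$ by $(b^{j}/b^{j+1})^{\alpha_j(\theta)/\alpha_j(z)}$ with $\alpha_j(z)=1/q_j^{z}$, keeping the phases of the $a_{\overline{k}}$ unchanged; define $g_z$ analogously using the dual exponents. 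Since $f$ and $g$ are simple, $f_z$ and $g_z$ are finite sums of terms of the form
\[
C^{\text{affine}(z)}\cdot e^{i\varphi}\cdot \chi_{A^{1}_{k_1}\times\cdots\times A^{d}_{k_d}},
\]
hence entire in $z$ pointwise and in every $L_{\overline{p}}$-norm on $\overline{S}$. A direct check gives $f_\theta = f$, $g_\theta = g$, and $\|f_{iy}\|_{L_{\overline{q}^{0}}(X,\mu)} = \|f_{1+iy}\|_{L_{\overline{q}^{1}}(X,\mu)} = 1$, with analogous dual identities for $g_z$; the verification reduces to the scalar identity $|b^{\alpha_j(\theta)/\alpha_j(z)}| = b^{\alpha_j(\theta)/\alpha_j(\operatorname{Re}z)}$ applied layer by layer, after which the iterated boundary norm telescopes back to $1$.

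Finally, form $\Phi(z) = \langle Tf_z, g_z\rangle_Y$. It is entire and bounded on $\overline{S}$, because $T$ maps into both extremal target spaces and the norms of $f_z,g_z$ stay bounded on $\overline{S}$. H\"older's inequality for anisotropic norms (iterated from the scalar one) yields $|\Phi(iy)| \le \|Tf_{iy}\|_{L_{\overline{q}^{0}}(Y,\nu)}\|g_{iy}\|_{L_{\overline{r}^{0}}(Y,\nu)} \le M_0$, with $\overline{r}^{0}$ dual to $\overline{q}^{0}$, and analogously $|\Phi(1+iy)| \le M_1$. Hadamard's three-lines lemma then gives $|\Phi(\theta)| \le M_0^{1-\theta}M_1^{\theta}$, which is the desired estimate evaluated at $z=\theta$. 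The main obstacle is the layered construction of $f_z$: the classical formula $f_z = |f|^{q^{\theta}/q^{z}}\sgn f$ has to be replaced by a coordinatewise version respecting the iterated anisotropic structure, and one must verify both analyticity in $z$ and the boundary-norm identities by an inductive unfolding over $j=d,d-1,\dots,1$. Once that algebra is in place, the rest is Stein's interpolation argument verbatim.
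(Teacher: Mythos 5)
You should first note that the paper itself does not prove this statement: it is quoted as a known theorem with references to Benedek--Panzone \cite[Section 7, Theorem 2]{bed_pan} and Galmarino--Panzone \cite[Theorem 10.2]{gal_pan}, and your strategy --- duality, a layered analytic family of simple functions, and the three-lines lemma --- is exactly the classical route taken in those sources. So in outline you are reconstructing the standard proof; the reduction to disjoint product cells, the norming role of the dual mixed norm on finite-measure products, and the boundedness/analyticity of $\Phi(z)=\langle Tf_z,\,g_z\rangle_Y$ (a finite sum of entire coefficients times fixed finite pairings $\langle T\chi_{\mathrm{cell}},\chi_{\mathrm{cell}'}\rangle$) are all sound.

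There is, however, a genuine error at the decisive step: your exponent in the layered construction is inverted. With $\alpha_j(z)=1/q_j^{z}$ (affine in $z$), the correct replacement for each layer ratio is $(b^{j}/b^{j+1})^{\alpha_j(z)/\alpha_j(\theta)}=(b^{j}/b^{j+1})^{q_j^{\theta}/q_j^{z}}$ --- which is what your own cited scalar formula $f_z=|f|^{q^{\theta}/q^{z}}\sgn f$ corresponds to --- whereas you wrote $(b^{j}/b^{j+1})^{\alpha_j(\theta)/\alpha_j(z)}$. The latter exponent is not affine in $z$, and your claimed identity $|b^{\alpha_j(\theta)/\alpha_j(z)}|=b^{\alpha_j(\theta)/\alpha_j(\mathrm{Re}\,z)}$ is false: one has $|b^{s}|=b^{\mathrm{Re}\,s}$, and $\mathrm{Re}\bigl(1/\alpha_j(z)\bigr)\ne 1/\alpha_j(\mathrm{Re}\,z)$ unless $\alpha_j$ is constant; e.g.\ for $\alpha(z)=(1-z)a+zb$ one gets $\mathrm{Re}\bigl(1/\alpha(iy)\bigr)=a/\bigl(a^{2}+y^{2}(b-a)^{2}\bigr)$, which varies with $y$. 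Consequently $\|f_{iy}\|_{L_{\overline{q}^{0}}}$ is not identically $1$ with your family, the boundary bounds $|\Phi(iy)|\le M_0$ and $|\Phi(1+iy)|\le M_1$ are lost, and the three-lines lemma yields nothing. With the corrected exponent the argument closes: since $\alpha_j$ is affine with real coefficients, $\mathrm{Re}\bigl(\alpha_j(z)/\alpha_j(\theta)\bigr)=\alpha_j(\mathrm{Re}\,z)/\alpha_j(\theta)$, all coefficients are of the form $C^{\mathrm{affine}(z)}$ (entire, bounded on the strip), and the boundary norms telescope layer by layer exactly as you intend (for $j=1$ at $z=iy$ the first factor of $|f_{iy}|^{q_1^{0}}$ is $(b^{1}/b^{2})^{q_1^{\theta}}$, whose $\mu_1$-integral reproduces $(b^{2})^{q_1^{\theta}}$, and so on inductively in $j$). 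You should also add the routine conventions your sketch omits: layers with $q_j^{0}=q_j^{1}=\infty$ (where $\alpha_j\equiv 0$ and the ratio is left unmodified), cells on which some $b^{j+1}$ vanishes (discard them), and the trivial endpoint cases $\theta\in\{0,\,1\}$.
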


We also need the following property of the Fej\'er kernel \cite[\S 1.2.3]{teml_book}: if $C_1\le mh\le C_2$, then
\begin{align}
\label{2_prop} \sum \limits _{0\le l\le 2\pi/h} {\cal K}_m(x-lh) \le Cm.
\end{align}

\begin{Lem}
\label{low_discr} Let $k\in \Z_+$. Then 
$$
d_k({\cal T}(\overline{N}, \, d)_{\overline{p}}, \, L_{\overline{q}}(\mathbb{T}^d)) \underset{d}{\gtrsim} \prod _{j=1}^d N_j^{1/p_j-1/q_j}d_k(B_{p_1, \, \dots, \, p_d}^{N_1, \dots, \, N_d}, \, l_{q_1, \, \dots, \, q_d}^{N_1, \dots, \, N_d}).
$$
\end{Lem}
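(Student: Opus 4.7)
The plan is to build a pair of maps between ${\cal T}(\overline N,d)$ and $\mathbb{R}^{N_1\cdots N_d}$---a Fej\'er-kernel interpolation and a grid sampling---compatible with the de la Vall\'ee Poussin operator $V_{\overline N}$, and use them to transfer any $k$-dimensional approximating subspace $L\subset L_{\overline q}(\mathbb{T}^d)$ to a $k$-dimensional $L_0\subset\mathbb{R}^{N_1\cdots N_d}$ with only a constant loss in approximation quality. The factor $\prod N_j^{1/p_j-1/q_j}$ will emerge from the norms of these two maps.

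Concretely, I would fix the grid $u_{i_j}^{(j)}=2\pi i_j/N_j$, $0\le i_j\le N_j-1$, the sampling $\pi(f)_{\overline i}=f(u^{(\overline i)})$ (well-defined on trigonometric polynomials), and the interpolation
$$T(x)(u)=\Bigl(\prod_{j=1}^d N_j\Bigr)^{-1}\sum_{\overline i}x_{\overline i}\prod_{j=1}^d {\cal K}_{N_j-1}(u_j-u_{i_j}^{(j)})\in{\cal T}(\overline N-\overline 1,d).$$
The identity ${\cal K}_{m-1}(2\pi k/m)=m\,\delta_{k\equiv 0\,(\mathrm{mod}\,m)}$, immediate from the explicit formula for the Fej\'er kernel, gives $\pi\circ T=\mathrm{id}_{\mathbb{R}^{N_1\cdots N_d}}$. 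Two norm estimates then drive the argument. First, $\|T(x)\|_{L_{\overline p}}\lesssim\prod N_j^{-1/p_j}\|x\|_{l_{\overline p}^{\overline N}}$, to be proved via the anisotropic Riesz--Thorin theorem (Theorem A) interpolating the endpoint bounds $\|T(x)\|_{L_1(\mathbb{T}^d)}\le(\prod N_j)^{-1}\|x\|_{l_1^{\overline N}}$ (triangle inequality with $\|{\cal K}_{N-1}\|_{L_1(\mathbb{T})}\asymp 1$) and $\|T(x)\|_{L_\infty(\mathbb{T}^d)}\lesssim\|x\|_{l_\infty^{\overline N}}$ (from property \eqref{2_prop}). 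Second, for every $g\in{\cal T}(2\overline N-\overline 1,d)$,
$$\|\pi(g)\|_{l_{\overline q}^{\overline N}}\lesssim\prod_{j=1}^d N_j^{1/q_j}\,\|g\|_{L_{\overline q}(\mathbb{T}^d)}.$$

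The main technical obstacle is this second estimate, since the $\overline N$-grid is too coarse for the classical Marcinkiewicz--Zygmund inequality on polynomials of degree up to $2\overline N-\overline 1$. I would circumvent it by first applying Marcinkiewicz--Zygmund on the finer $4\overline N$-grid (which has $4N_j\ge 2(2N_j-1)+1$ points per coordinate, as required), obtaining the matching bound $\|\pi_{4\overline N}(g)\|_{l_{\overline q}^{4\overline N}}\lesssim\prod(4N_j)^{1/q_j}\|g\|_{L_{\overline q}}\asymp\prod N_j^{1/q_j}\|g\|_{L_{\overline q}}$, and then observing that the $\overline N$-grid is a Cartesian sub-grid of the $4\overline N$-grid, so restriction only decreases an anisotropic $l_{\overline q}$-norm (a direct check using the iterated definition of $\|\cdot\|_{l_{\overline q}^{\overline N}}$).

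With these ingredients the rest is assembly. Given $L\subset L_{\overline q}(\mathbb{T}^d)$ with $\dim L\le k$, I would set $L_0:=\pi V_{\overline N}(L)\subset\mathbb{R}^{N_1\cdots N_d}$, so $\dim L_0\le k$. For $x\in B^{\overline N}_{\overline p}$, the polynomial $t_x:=T(x)$ has $L_{\overline p}$-norm at most $C\prod N_j^{-1/p_j}$, so a suitable rescaling lies in ${\cal T}(\overline N,d)_{\overline p}$ and is approximated by some $l\in L$ within the approximation error $\mathrm{err}(L)$ of $L$; undoing the rescaling yields $\tilde l\in L$ with $\|t_x-\tilde l\|_{L_{\overline q}}\lesssim\prod N_j^{-1/p_j}\cdot\mathrm{err}(L)$. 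Applying $\pi V_{\overline N}$ and combining $V_{\overline N}t_x=t_x$, $\pi\circ T=\mathrm{id}$, boundedness of $V_{\overline N}$ on $L_{\overline q}$, and the sampling estimate, I obtain $y:=\pi V_{\overline N}\tilde l\in L_0$ with $\|x-y\|_{l_{\overline q}^{\overline N}}\lesssim\prod N_j^{1/q_j-1/p_j}\,\mathrm{err}(L)$. Taking $\sup_{x\in B^{\overline N}_{\overline p}}\inf_{y\in L_0}$ and then $\inf_L$ delivers the claimed lower bound.
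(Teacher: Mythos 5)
Your overall architecture is exactly the one the paper follows from \cite[Lemma 3.5.8]{teml_book}: Fej\'er interpolation on the $\overline{N}$-grid (your $T$ is the paper's $y\mapsto t(y,\cdot)$, since ${\cal K}_{N_j-1}(0)=N_j$), the identity $\pi\circ T=\mathrm{id}$, and transfer of the approximating subspace through $\pi V_{\overline{N}}$. However, your first norm estimate has a genuine gap. A \emph{single} application of the anisotropic Riesz--Thorin theorem (Theorem \ref{ri_tor}) with endpoints $\overline{q}^{\,0}=(1,\dots,1)$ and $\overline{q}^{\,1}=(\infty,\dots,\infty)$ produces only the exponent vectors with $1/q_j^\theta=1-\theta$ for \emph{every} $j$, i.e.\ vectors with all components equal; it cannot yield $\|T(x)\|_{L_{\overline{p}}}\lesssim\prod_j N_j^{-1/p_j}\|x\|_{l_{\overline{p}}^{\overline{N}}}$ for genuinely anisotropic $\overline{p}$. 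To reach arbitrary mixed exponents you must interpolate one coordinate at a time, and this is precisely the content of the paper's proof of \eqref{t_norm_q}: induction on $d$, where the endpoint cases $q_d=\infty$ (via \eqref{2_prop}) and $q_d=1$ (via $\int_{\mathbb{T}}\varphi_d\lesssim N_d^{-1}$) are established with the first $d-1$ exponents frozen at their inductive values, and only then Theorem \ref{ri_tor} is applied with $\overline{q}^{\,0}=(q_1,\dots,q_{d-1},\infty)$, $\overline{q}^{\,1}=(q_1,\dots,q_{d-1},1)$.

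Your second estimate has a related problem: the oversampling trick (passing to the $4\overline{N}$-grid, of which the $\overline{N}$-grid is a Cartesian subgrid, with monotonicity of the iterated $l_{\overline{q}}$-norm under restriction) is sound, but there is no ``classical'' Marcinkiewicz--Zygmund inequality in the anisotropic norm $l_{q_1,\dots,q_d}$ with distinct $q_j$ to invoke. The multivariate mixed-norm version in this literature \cite[Theorem 1.2]{galeev85} is proved for $1<q_j<\infty$, whereas the lemma is used in the proof of Theorem \ref{main2} with $q_j=1$ admissible; and in any case the sampled-values bound $\|\pi_{4\overline{N}}(g)\|_{l_{\overline{q}}^{4\overline{N}}}\lesssim\prod_j N_j^{1/q_j}\|g\|_{L_{\overline{q}}(\mathbb{T}^d)}$ for $g\in{\cal T}(2\overline{N}-\overline{1},\,d)$ would itself have to be proved by representing $g(z)=V_{2\overline{N}}g(z)$ as sampled convolutions and running the same endpoint-plus-coordinatewise-interpolation induction as for \eqref{t_norm_q}. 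So your plan does not avoid the paper's induction; it relocates it to two places. Once both estimates are proved that way (note you need the $T$-bound at exponent $\overline{p}$ and the sampling bound at exponent $\overline{q}$, plus the routine uniform boundedness of $V_{\overline{N}}$ on $L_{\overline{q}}(\mathbb{T}^d)$ by iterated Young inequalities), your assembly via $L_0=\pi V_{\overline{N}}(L)$ is correct and coincides with the argument the paper cites from \cite[Lemma 3.5.8]{teml_book}.
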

\begin{proof}
We repeat arguments from \cite[Lemma 3.5.8]{teml_book}. It suffices to consider the real-valued trigonometric polynomials.

We denote $\nu(\overline{N}) = \{(n_1, \, \dots, \, n_d):\; 1\le n_j\le N_j, \; j=1, \, \dots, \, d\}$. Given $y=(y_{\overline{n}})_{\overline{n}\in \nu(\overline{N})}$, we set
$$
t(y, \, \cdot) = \sum \limits _{\overline{n}\in \nu(\overline{N})}y_{\overline{n}} \varphi(\cdot - z_{\overline{n}}),
$$
where $z_{\overline{n}} =(2\pi n_1/N_1, \, \dots, \, 2\pi n_d/N_d)$, $\varphi(\cdot) = {\cal K}_{\overline{N}-\overline{1}}(\cdot)/{\cal K}_{\overline{N}-\overline{1}}(0)$.

It suffices to prove that
\begin{align}
\label{t_norm_q} \|t(y, \, \cdot)\|_{L_{\overline{q}}(\mathbb{T}^d)} \underset{d}{\lesssim} N_1^{-1/q_1}\dots N_d^{-1/q_d}\|y\|_{l_{q_1, \, \dots, \, q_d}^{N_1, \, \dots, \, N_d}};
\end{align}
the further arguments are similar to that in \cite[Lemma 3.5.8]{teml_book}.

We prove \eqref{t_norm_q} by induction on $d$. The case $d=1$ was obtained in \cite[Lemma 3.5.8]{teml_book}. Now we describe the induction step from $d-1$ to $d$. It suffices to consider the cases $q_d=1$ and $q_d=\infty$; after that we apply Theorem \ref{ri_tor} for $\overline{q}^{0}=(q_1, \, \dots, \, q_{d-1}, \, \infty)$, $\overline{q}^{1}=(q_1, \, \dots, \, q_{d-1}, \, 1)$, $\overline{q}^{\theta}=(q_1, \, \dots, \, q_{d-1}, \, q_d)$.

We set $\overline{m} = (n_1, \, \dots, \, n_{d-1})$, $\overline{M} = (N_1, \, \dots, \, N_{d-1})$, $\xi=(x_1, \, \dots, \, x_{d-1})$, $\zeta_{\overline{m}} = (2\pi n_1/N_1, \, \dots, \, 2\pi n_{d-1}/N_{d-1})$, $z_{d,n_d}= 2\pi n_d/N_d$, $\psi(\xi)= {\cal K}_{\overline{M}-\overline{1}}(\xi)/{\cal K}_{\overline{M}-\overline{1}}(0)$, $\varphi_d(x_d) = {\cal K}_{N_d-1}(x_d)/ {\cal K}_{N_d-1}(0)$.

We fix the point $x_d\in \mathbb{T}$ and denote $b_{\overline{m}} = \sum \limits _{n_d=1}^{N_d} y_{\overline{m},n_d} \varphi_d(x_d-z_{d,n_d})$. By the induction hypothesis,
$$
\|t(y, \, \cdot, \, x_d)\|_{L_{(q_1, \, \dots, \, q_{d-1})}(\mathbb{T}^{d-1})} = \left\| \sum \limits _{\overline{m}}b_{\overline{m}} \psi(\cdot - \zeta_{\overline{m}})\right\|_{L_{(q_1, \, \dots, \, q_{d-1})}(\mathbb{T}^{d-1})} \underset{d}{\lesssim}
$$
$$
\lesssim N_1^{-1/q_1} \dots N_{d-1}^{-1/q_{d-1}} \|(b_{\overline{m}})_{\overline{m}} \| _{l_{q_1, \, \dots, \, q_{d-1}} ^{N_1, \, \dots, \, N_{d-1}}} = $$$$=N_1^{-1/q_1} \dots N_{d-1}^{-1/q_{d-1}} \left\| \Bigl(\sum \limits _{n_d}y_{\overline{m}, n_d} \varphi_d(x_d-z_{d,n_d})\Bigr)_{\overline{m}}\right\| _{l_{q_1, \, \dots, \, q_{d-1}} ^{N_1, \, \dots, \, N_{d-1}}} \le 
$$
$$
\le N_1^{-1/q_1} \dots N_{d-1}^{-1/q_{d-1}} \sum \limits _{n_d} \varphi_d(x_d-z_{d,n_d}) \|(y_{\overline{m}, n_d}) _{\overline{m}} \| _{l_{q_1, \, \dots, \, q_{d-1}} ^{N_1, \, \dots, \, N_{d-1}}}.
$$

For $q_d=\infty$, we have
$$
\|t(y, \, \cdot)\|_{L_{\overline{q}}(\mathbb{T}^d)} \underset{d}{\lesssim} \max _{x_d} N_1^{-1/q_1} \dots N_{d-1}^{-1/q_{d-1}} \sum \limits _{n_d} \varphi_d(x_d-z_{d,n_d}) \|(y_{\overline{m}, n_d}) _{\overline{m}} \| _{l_{q_1, \, \dots, \, q_{d-1}} ^{N_1, \, \dots, \, N_{d-1}}}\le
$$
$$
\le N_1^{-1/q_1} \dots N_{d-1}^{-1/q_{d-1}} \max _{n_d} \|(y_{\overline{m}, n_d}) _{\overline{m}} \| _{l_{q_1, \, \dots, \, q_{d-1}} ^{N_1, \, \dots, \, N_{d-1}}} \max _{x_d}
\sum \limits _{n_d} \varphi_d(x_d-z_{d,n_d}) \stackrel{\eqref{2_prop}}{\lesssim}
$$
$$
\lesssim N_1^{-1/q_1} \dots N_{d-1}^{-1/q_{d-1}} \|(y_{\overline{n}})_{\overline{n}}\| _{l_{q_1, \, \dots, \, q_d}^{N_1, \, \dots, \, N_d}}.
$$

For $q_d=1$,
$$
\|t(y, \, \cdot)\|_{L_{\overline{q}}(\mathbb{T}^d)} \underset{d}{\lesssim} N_1^{-1/q_1} \dots N_{d-1}^{-1/q_{d-1}} \int \limits _{\mathbb{T}}\sum \limits _{n_d} \varphi_d(x_d-z_{d,n_d}) \|(y_{\overline{m}, n_d}) _{\overline{m}} \| _{l_{q_1, \, \dots, \, q_{d-1}} ^{N_1, \, \dots, \, N_{d-1}}}\, dx_d\le
$$
$$
\le N_1^{-1/q_1} \dots N_{d-1}^{-1/q_{d-1}} \sum \limits _{n_d} \|(y_{\overline{m}, n_d}) _{\overline{m}} \| _{l_{q_1, \, \dots, \, q_{d-1}} ^{N_1, \, \dots, \, N_{d-1}}} \max _{n_d} \int \limits _{\mathbb{T}}\varphi_d(x_d-z_{d,n_d})\, dx_d \lesssim
$$
$$
\lesssim N_1^{-1/q_1} \dots N_{d-1}^{-1/q_{d-1}} N_d^{-1} \|(y_{\overline{n}})_{\overline{n}}\| _{l_{q_1, \, \dots, \, q_d}^{N_1, \, \dots, \, N_d}}.
$$
This completes the proof.
\end{proof}

Let $m\in \Z_+$, $\overline{\beta} = (\beta_1, \, \dots, \, \beta_d)$, and let $\lfloor 2^{\overline{\beta}m}\rfloor$ be defined by formulas \eqref{beta_vec_def} and \eqref{2beta_vec_def}, 
\begin{align}
\label{til_m_prm}
\tilde M_{\overline{p},\overline{r},m} = 2^{-m\langle \overline{r}\rangle/d}{\cal T}(\lfloor 2^{\overline{\beta} m}\rfloor,d)_{\overline{p}}. 
\end{align}
Applying Lemma \ref{low_discr} for $\overline{N} = \lfloor 2^{\overline{\beta}m} \rfloor$ and taking into account \eqref{beta_vec_def}, \eqref{gamma_0_gamma}, we get
\begin{align}
\label{m_pr_low} 
d_k(\tilde M_{\overline{p},\overline{r},m}, \, L_{\overline{q}}(\mathbb{T}^d)) \underset{d}{\gtrsim} 2^{-m\gamma(\overline{p},\overline{q},\overline{r})\langle \overline{r}\rangle/d}d_k(B_{p_1, \, \dots, \, p_d}^{\lfloor 2^{\beta_1m}\rfloor, \dots, \, \lfloor 2^{\beta_dm}\rfloor}, \, l_{q_1, \, \dots, \, q_d}^{\lfloor 2^{\beta_1m}\rfloor, \dots, \, \lfloor 2^{\beta_dm}\rfloor}).
\end{align}

Let $r\ge 0$, $\alpha\in \R$, $n\in \N$. We set for $x\in \R$
$$
{\cal V}^r_n(x, \, \alpha) = 1+2 \sum \limits _{k=1}^n k^r \cos (kx+\alpha \pi/2) + 2 \sum \limits _{k=n+1}^{2n-1}k^r \left(1-\frac{k-n}{n}\right)\cos (kx+\alpha \pi/2).
$$
For $t\in {\cal T}(n, \, 1)$, we set $D^r_\alpha t = t * {\cal V}^r_n(\cdot, \, \alpha)$, $I^r_\alpha t= t* F_r(\cdot, \, \alpha)$, where $F_r(\cdot, \, \alpha)$ is the Bernoulli kernel (see \eqref{frxa}). Then (see \cite[formula (1.4.8)]{teml_book})
\begin{align}
\label{dif_int} D^r_\alpha I^r_\alpha t= I^r_\alpha D^r_\alpha t = t, \quad D^0_0t = t.
\end{align}

Let now $\overline{r}=(r_1, \, \dots, \, r_d)\in \R_+^d$, $\overline{\alpha} = (\alpha_1, \, \dots, \, \alpha_d)\in \R^d$, $\overline{N}=(N_1,\, \dots, \, N_d)\in \N^d$, $x=(x_1, \, \dots, \, x_d) \in \mathbb{T}^d$. We set $${\cal V}^{\overline{r}}_{\overline{N}}(x, \, \overline{\alpha}) = \prod _{j=1}^d {\cal V}^{r_j}_{N_j}(x_j, \, \alpha_j), \quad D^{\overline{r}}_{\overline{\alpha}} t = t* {\cal V}^{\overline{r}}_{\overline{N}}(\cdot, \, \alpha), \quad t \in {\cal T}(\overline{N}, \, d).$$

\begin{trma}
\label{bern_ineq} {\rm (the Bernstein inequality; see \cite[Theorem 3.3.1]{teml_book})} Let $r_j\ge 0$, $\alpha_j\in \R$ $(1\le j\le d)$, and let $\alpha_j=0$ for all $j$ such that $r_j=0$. Then, for all $\overline{N}\in \N^d$, $\overline{p}\in [1, \, \infty]^d$, $t\in {\cal T}(\overline{N}, \, d)$,
$$
\|D^{\overline{r}}_{\overline{\alpha}}t\|_{L_{\overline{p}}(\mathbb{T}^d)} \underset{\overline{r}}{\lesssim} \|t\|_{L_{\overline{p}}(\mathbb{T}^d)} \prod_{j=1}^d N_j^{r_j}.
$$
\end{trma}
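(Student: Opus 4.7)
The plan is to reduce the multivariate anisotropic inequality to two univariate ingredients: a sharp $L_1$-norm bound for the one-variable kernel ${\cal V}^{r_j}_{N_j}(\cdot,\alpha_j)$, and an ``anisotropic Young'' lemma asserting that convolution in a single variable with an $L_1(\mathbb{T})$ kernel is bounded on $L_{\overline{p}}(\mathbb{T}^d)$ with the same constant. Once these are in place, the tensor factorization ${\cal V}^{\overline{r}}_{\overline{N}}(x,\overline{\alpha})=\prod_{j=1}^d{\cal V}^{r_j}_{N_j}(x_j,\alpha_j)$ lets me realize $D^{\overline{r}}_{\overline{\alpha}}$ as the composition $T_1\circ\dots\circ T_d$ of single-variable convolution operators, so multiplying the operator norms yields the factor $\prod_j N_j^{r_j}$.

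For the univariate bound $\|{\cal V}^r_n(\cdot,\alpha)\|_{L_1(\mathbb{T})}\underset{r}{\lesssim}n^r$ I would represent ${\cal V}^r_n(\cdot,\alpha)$ as the convolution of the ordinary de la Vall\'ee Poussin kernel (uniformly bounded in $L_1$) with a truncated Bernoulli-type kernel whose Fourier coefficients equal $k^r$ in absolute value on the relevant frequency range, and then estimate the $L_1$ norm of the latter by an Abel-summation argument on the monotone sequence $k^r$; the constraint $\alpha_j=0$ when $r_j=0$ is precisely what makes the constant term of the series consistent with $k^0\cos(\alpha\pi/2)=1$. For the anisotropic Young lemma, fixing $(x_{j+1},\dots,x_d)$ I would apply Minkowski's integral inequality to pull the integral $\int K(x_j-y)\,dy$ outside the inner $L_{(p_1,\dots,p_{j-1})}$-norm, obtaining pointwise in $x_j$ a scalar convolution $|K|\ast g_{x_{>j}}$ with $g_{x_{>j}}(y)=\|f(\cdot,y,x_{>j})\|_{L_{(p_1,\dots,p_{j-1})}}$; the univariate Young inequality in $x_j$ with exponent $p_j$ then contributes the factor $\|K\|_{L_1(\mathbb{T})}$, and the outer norms in $x_{j+1},\dots,x_d$ pass through unchanged because $T_j$ does not act there.

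The main obstacle is unquestionably the univariate kernel estimate. For integer $r$ and $\alpha\in\{0,1,2,3\}$ this reduces to the classical $L_1$-bound on derivatives of the de la Vall\'ee Poussin kernel, but for fractional $r$ and arbitrary $\alpha\in\R$ one must control a regularized variant of the Bernoulli kernel $F_{-r}(\cdot,-\alpha)$ with some care---this is precisely what \cite[Theorem 3.3.1]{teml_book} carries out, using the identity \eqref{dif_int} and properties of the kernels ${\cal V}_n$ and $F_r$ developed in \S1.4 of that book. After this step, the multivariate lift is essentially bookkeeping: the tensor structure of ${\cal V}^{\overline{r}}_{\overline{N}}$ and the iterated Minkowski argument produce the anisotropic inequality mechanically from its univariate counterpart.
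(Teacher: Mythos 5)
First, a point of comparison: the paper contains no proof of this statement at all --- it is a lettered theorem imported verbatim from \cite[Theorem 3.3.1]{teml_book} --- so your sketch can only be judged against the standard argument in that reference. Your multivariate reduction is sound and is exactly the standard lift: since ${\cal V}^{\overline{r}}_{\overline{N}}(\cdot, \, \overline{\alpha})$ is a tensor product, $D^{\overline{r}}_{\overline{\alpha}}$ factors as $T_1\circ \dots \circ T_d$ with $T_j$ a convolution in the $j$th variable only, and your iterated Minkowski--Young argument correctly shows that $T_j$ acts on $L_{\overline{p}}(\mathbb{T}^d)$ with norm at most $\|{\cal V}^{r_j}_{N_j}(\cdot, \, \alpha_j)\|_{L_1(\mathbb{T})}$, so the operator norms multiply.

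The genuine gap is in your univariate kernel estimate. If you factor ${\cal V}^r_n(\cdot, \, \alpha)= V_n * K$ with $K$ a \emph{sharply} truncated kernel whose coefficients are $k^r$ (with phase) for $1\le k\le 2n-1$, then submultiplicativity only gives $\|{\cal V}^r_n\|_{L_1}\le \|V_n\|_{L_1}\,\|K\|_{L_1}$, and $\|K\|_{L_1}\asymp n^r\log n$: a single Abel summation on the monotone sequence $a_k=k^r$ writes $K$ as $\sum_{k<2n-1}\Delta a_k\, D_k + a_{2n-1}D_{2n-1}$ with Dirichlet kernels $D_k$, and since $\|D_k\|_{L_1}\asymp \log k$ the top term alone contributes $n^r\log n$; the loss is not an artifact of the method --- a sharp spectral edge genuinely costs a logarithm (at $r=0$ the kernel $K$ \emph{is} the Dirichlet kernel). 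So this route proves only $\underset{r}{\lesssim}\, n^r\log n$, which is insufficient. The correct argument must keep the amplitude $a_k=k^r\lambda(k/n)$, with $\lambda$ the piecewise-linear de la Vall\'ee Poussin cutoff, as a single sequence and sum by parts \emph{twice}: one gets $\sum_k (k+1)|\Delta^2 a_k|\underset{r}{\lesssim} n^r$ (for $k\le n$ one has $|\Delta^2 a_k|\lesssim k^{r-2}$, so $\sum_{k\le n}(k+1)k^{r-2}\asymp n^r$ when $r>0$, and the breakpoints at $k\asymp n,\,2n$ contribute $O(n^{r})$), whence $|{\cal V}^r_n(x, \, \alpha)|\lesssim \min\{n^{r+1}, \, n^{r-1}|x|^{-2}+\dots\}$ and $\|{\cal V}^r_n(\cdot, \, \alpha)\|_{L_1}\underset{r}{\lesssim} n^r$. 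Note also that your fallback --- deferring this kernel control to \cite[Theorem 3.3.1]{teml_book} --- is circular, since that theorem is precisely the statement being proven; what you need is the earlier kernel estimate from \S 1.4 of the same book. Incidentally, this is also where the hypothesis $\alpha_j=0$ whenever $r_j=0$ really enters: for $r=0$, $\alpha\ne 0$ the kernel is a conjugate-type de la Vall\'ee Poussin kernel with $L_1$ norm of order $\log n$, so the inequality with constant $N_j^0=1$ would fail --- the issue is the unboundedness of the conjugation in $L_1$, not merely consistency of the constant term.
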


Let $t\in \tilde M_{\overline{p},\overline{r},m}$ (see \eqref{til_m_prm}), $\overline{N} = \lfloor 2^{\overline{\beta}m}\rfloor$. For $1\le j\le d$ we set $\overline{r}_j=(0, \, \dots, \, 0, \, r_j, \, 0, \, \dots, \, 0)$, $\overline{\alpha}_j=(0, \, \dots, \, 0, \, \alpha_j, \, 0, \, \dots, \, 0)$ ($r_j$ and $\alpha_j$ are on $j$th position), $\varphi_j = D^{\overline{r}_j}_{\overline{\alpha}_j} t$. By \eqref{dif_int},
\begin{align}
\label{t_def_frj}
t(x_1, \, \dots, \, x_d) = \frac{1}{2\pi} \int \limits _{\mathbb{T}} \varphi_j(x_1, \, \dots, \, x_{j-1}, \, y, \, x_{j+1}, \, \dots, \, x_d) F_{r_j}(x_j-y, \, \alpha_j)\, dy;
\end{align}
from Theorem \ref{bern_ineq} we get
$$
\|\varphi_j\|_{L_{\overline{p}}(\mathbb{T}^d)} \underset{\overline{r}}{\lesssim} \|t\|_{L_{\overline{p}}(\mathbb{T}^d)} N_j^{r_j} \stackrel{\eqref{til_m_prm}}{\le} 2^{-m\langle \overline{r}\rangle/d} 2^{m\beta_j r_j} \stackrel{\eqref{beta_vec_def}}{=} 1, \quad 1\le j\le d.
$$
Hence (see \eqref{t_def_frj} and the definition of $W^{\overline{r}}_{\overline{p},\overline{\alpha}}(\mathbb{T}^d)$) there is $C(\overline{r})>0$ such that
$$
\tilde M_{\overline{p},\overline{r},m} \subset C(\overline{r}) W^{\overline{r}}_{\overline{p},\overline{\alpha}}(\mathbb{T}^d).
$$
This together with \eqref{m_pr_low} implies
\begin{Lem}
\label{low_discr1} Let $k\in \Z_+$, $m\in \Z_+$. Then
$$
d_k(W^{\overline{r}}_{\overline{p},\overline{\alpha}}(\mathbb{T}^d), \, L_{\overline{q}}(\mathbb{T}^d)) \underset{\overline{r},d}{\gtrsim} 2^{-m\gamma(\overline{p},\overline{q},\overline{r})\langle \overline{r}\rangle/d}d_k(B_{p_1, \, \dots, \, p_d}^{\lfloor 2^{\beta_1m}\rfloor, \dots, \, \lfloor 2^{\beta_dm}\rfloor}, \, l_{q_1, \, \dots, \, q_d}^{\lfloor 2^{\beta_1m}\rfloor, \dots, \, \lfloor 2^{\beta_dm}\rfloor}).
$$
\end{Lem}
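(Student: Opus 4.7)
The statement is essentially a bookkeeping consequence of what has already been assembled in the paragraph preceding it, so my plan is to chain together two ingredients via the elementary properties of Kolmogorov widths. The first ingredient is the inclusion
$$
\tilde M_{\overline{p},\overline{r},m}\subset C(\overline{r})\,W^{\overline{r}}_{\overline{p},\overline{\alpha}}(\mathbb{T}^d),
$$
which was derived just above from the representation \eqref{t_def_frj} together with the Bernstein bound on $\|D^{\overline{r}_j}_{\overline{\alpha}_j}t\|_{L_{\overline{p}}(\mathbb{T}^d)}$. The matching of exponents there was not an accident: Bernstein produces a factor $N_j^{r_j}=\lfloor 2^{\beta_j m}\rfloor^{r_j}$, which is exactly cancelled by the normalization $2^{-m\langle\overline{r}\rangle/d}$ from \eqref{til_m_prm}, since $\beta_j r_j=\langle\overline{r}\rangle/d$ by the choice \eqref{beta_vec_def} of $\overline{\beta}$. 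The second ingredient is the lower bound \eqref{m_pr_low} for $d_k(\tilde M_{\overline{p},\overline{r},m},\,L_{\overline{q}}(\mathbb{T}^d))$.

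Next I would invoke the two standard properties of the Kolmogorov width, namely monotonicity under set inclusion and positive homogeneity, and apply them to the inclusion above. This immediately yields
$$
d_k(\tilde M_{\overline{p},\overline{r},m},\,L_{\overline{q}}(\mathbb{T}^d))\le C(\overline{r})\,d_k(W^{\overline{r}}_{\overline{p},\overline{\alpha}}(\mathbb{T}^d),\,L_{\overline{q}}(\mathbb{T}^d)).
$$
Inserting the lower bound \eqref{m_pr_low} on the left-hand side and absorbing $C(\overline{r})^{-1}$ into the constant implicit in the symbol $\underset{\overline{r},d}{\gtrsim}$ then produces exactly the claimed inequality.

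There is no genuine obstacle in this last step: all the analytic work (the discretization in Lemma \ref{low_discr}, the passage from the Fej\'er-kernel construction to trigonometric polynomials of bounded $L_{\overline{p}}$-norm, and the Bernstein-based embedding into a dilate of $W^{\overline{r}}_{\overline{p},\overline{\alpha}}(\mathbb{T}^d)$) has already been carried out. The lemma merely records the resulting inequality in the form that will be fed into the comparison with the finite-dimensional widths $d_k(B_{p_1,\dots,p_d}^{\lfloor 2^{\beta_1 m}\rfloor,\dots,\lfloor 2^{\beta_d m}\rfloor},\,l_{q_1,\dots,q_d}^{\lfloor 2^{\beta_1 m}\rfloor,\dots,\lfloor 2^{\beta_d m}\rfloor})$ when $m$ is chosen optimally in the proofs of Theorems \ref{main1} and \ref{main2}.
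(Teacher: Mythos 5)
Your proposal is correct and follows essentially the same route as the paper: the paper likewise combines the inclusion $\tilde M_{\overline{p},\overline{r},m}\subset C(\overline{r})\,W^{\overline{r}}_{\overline{p},\overline{\alpha}}(\mathbb{T}^d)$ (obtained from \eqref{t_def_frj}, Theorem \ref{bern_ineq}, and the cancellation $\beta_j r_j=\langle\overline{r}\rangle/d$ from \eqref{beta_vec_def}) with the discretization bound \eqref{m_pr_low}, using monotonicity and homogeneity of the Kolmogorov width and absorbing $C(\overline{r})$ into the implicit constant. There is no gap.
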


\section{Proof of Theorems \ref{main1} and \ref{main2}}

In order to prove the upper (lower) estimate, we use Lemma \ref{up_discr1} (respectively, Lemma \ref{low_discr1}). Then we apply Theorem \ref{fim_dim_poper} and Proposition \ref{fim_dim_poper2}.

We prove Theorem \ref{main1} (the proof of Theorem \ref{main2} is similar and easier).

We define $\overline{\beta}$ by \eqref{beta_vec_def}; given $m\in \Z_+$, we set $\overline{N} = (N_1, \, \dots, \, N_d)=\lfloor 2^{\overline{\beta}m}\rfloor$ according to \eqref{2beta_vec_def}; the number $\gamma(\overline{p},\overline{q},\overline{r})$ is defined by  \eqref{gamma_0_gamma}.

Let $n\le \frac{N_1\dots N_d}{2}$. We obtain the order estimates of
$$
2^{-m\gamma(\overline{p},\overline{q},\overline{r})\langle \overline{r}\rangle/d}d_n(B_{p_1, \, \dots, \, p_d}^{\lfloor 2^{\beta_1m}\rfloor, \dots, \, \lfloor 2^{\beta_dm}\rfloor}, \, l_{q_1, \, \dots, \, q_d}^{\lfloor 2^{\beta_1m}\rfloor, \dots, \, \lfloor 2^{\beta_dm}\rfloor}).
$$
To this end, we use Theorem \ref{fim_dim_poper}.

For $n\le N_{\sigma(1)}\dots N_{\sigma(\mu)} N _{\sigma(\mu+1)}^{2/q_{\sigma(\mu+1)}} \dots N_{\sigma(d)} ^{2/q_{\sigma(d)}}$, we have (see \eqref{om_pq}, \eqref{upor}, \eqref{fin_dim_est1}, \eqref{dn11})
\begin{align}
\label{d_est0h}
\begin{array}{c}
2^{-m\gamma(\overline{p},\overline{q},\overline{r})\langle \overline{r}\rangle/d}d_n(B_{p_1, \, \dots, \, p_d}^{\lfloor 2^{\beta_1m}\rfloor, \dots, \, \lfloor 2^{\beta_dm}\rfloor}, \, l_{q_1, \, \dots, \, q_d}^{\lfloor 2^{\beta_1m}\rfloor, \dots, \, \lfloor 2^{\beta_dm}\rfloor}) \underset{\overline{q}}{\asymp} \\ \asymp 2^{-m(1+(d-\mu)/\langle \overline{q}\circ \overline{r}\rangle_{I(\mu+1, \, d)}-(d-\mu)/\langle \overline{p}\circ \overline{r}\rangle_{I(\mu+1, \, d)})\langle \overline{r}\rangle/d}.
\end{array}
\end{align}

For $t\in \{\mu+1, \, \mu+2, \, \dots, \, \nu\}$, by \eqref{mu_nu_def}, we get $\omega_{p_{\sigma(t)}, q_{\sigma(t)}} \in (0, \, 1)$; this together with \eqref{om_pq} implies that $q_{\sigma(t)}>2$, $p_{\sigma(t)}\in (2, \, q_{\sigma(t)})$.

Let $t\in \{\mu+1, \, \mu+2, \, \dots, \, \nu\}$, $$N_{\sigma(1)} \dots N_{\sigma(t-1)}N_{\sigma(t)}^{2/q_{\sigma(t)}} \dots N_{\sigma(d)}^{2/q_{\sigma(d)}} < n \le N_{\sigma(1)} \dots N_{\sigma(t)}N_{\sigma(t+1)}^{2/q_{\sigma(t+1)}} \dots N_{\sigma(d)}^{2/q_{\sigma(d)}}.$$
Then (see \eqref{fin_dim_est1}, \eqref{ks12qt}, \eqref{dn22})
\begin{align}
\label{d_est1h}
\begin{array}{c}
2^{-m\gamma(\overline{p},\overline{q},\overline{r})\langle \overline{r}\rangle/d}d_n(B_{p_1, \, \dots, \, p_d}^{\lfloor 2^{\beta_1m}\rfloor, \dots, \, \lfloor 2^{\beta_dm}\rfloor}, \, l_{q_1, \, \dots, \, q_d}^{\lfloor 2^{\beta_1m}\rfloor, \dots, \, \lfloor 2^{\beta_dm}\rfloor}) \underset{\overline{q}}{\asymp} 
\\
\asymp 2^{-m\frac{\langle \overline{r}\rangle}{d}(1+(d-t+1)(1/\langle \overline{r}\circ \overline{q}\rangle_{I(t, \, d)} - 1/\langle \overline{r}\circ \overline{p}\rangle_{I(t, \, d)}))}  \Bigl( n^{-\frac 12} 2^{m\frac{\langle \overline{r}\rangle}{d}\left(\frac{t-1}{2\langle \overline{r}\rangle _{I(1, \, t-1)}}+\frac{d-t+1}{\langle \overline{r}\circ \overline{q}\rangle_{I(t, \, d)}}\right)}\Bigr)^{\frac{1/p_{\sigma(t)}-1/q_{\sigma(t)}}{1/2-1/q_{\sigma(t)}}}.
\end{array}
\end{align}

If $\nu<d$ and $n> N_{\sigma(1)}\dots N_{\sigma(\nu)}N_{\sigma(\nu+1)}^{2/q_{\sigma(\nu+1)}} \dots N_{\sigma(d)}^{2/q_{\sigma(d)}}$, then (see \eqref{fin_dim_est1}, \eqref{dn33})
\begin{align}
\label{d_est2h}
\begin{array}{c}
2^{-m\gamma(\overline{p},\overline{q},\overline{r})\langle \overline{r}\rangle/d}d_n(B_{p_1, \, \dots, \, p_d}^{\lfloor 2^{\beta_1m}\rfloor, \dots, \, \lfloor 2^{\beta_dm}\rfloor}, \, l_{q_1, \, \dots, \, q_d}^{\lfloor 2^{\beta_1m}\rfloor, \dots, \, \lfloor 2^{\beta_dm}\rfloor}) \underset{\overline{q}}{\asymp} 
\\
\asymp 2^{-m\frac{\langle \overline{r}\rangle}{d}(1-(d-\nu)/\langle \overline{p}\circ \overline{r}\rangle_{I(\nu+1, \, d)}-\nu/\langle 2\overline{r}\rangle_{I(1, \, \nu)})} n^{-1/2}.
\end{array}
\end{align}

Let $T=\{\mu, \, \mu+1, \, \dots, \, \nu\}$. For $t\in T$ we define the number $s_t$ by the equation
$$
s_t\cdot \frac{\langle \overline{r}\rangle}{d}\left( \sum \limits _{j=1}^t \frac{1}{r_{\sigma(j)}} + \sum \limits _{j=t+1}^d \frac{2}{q_{\sigma(j)}r_{\sigma(j)}}\right)=1;
$$
then $s_t$ strictly decreases in $t\in T$ and
$$
s_t = \frac{1}{\frac{t}{d}\cdot \frac{\langle \overline{r}\rangle}{\langle \overline{r}\rangle_{I(1, \, t)}}+ 2\frac{d-t}{d}\cdot \frac{\langle \overline{r}\rangle}{\langle \overline{r}\circ \overline{q}\rangle_{I(t+1, \, d)}}}.
$$
If $\nu=d$ or $q_{\sigma(j)}=2$ for all $j\in \{\nu + 1, \, \dots, \, d\}$, then $s_\nu=1$.

If $\nu<d$ and there is $j\in \{\nu + 1, \, \dots, \, d\}$ such that $q_{\sigma(j)}>2$, then $s_\nu>1$; in this case, we set $s_d=1$.

Let $J$ be the set from the formulation of Theorem \ref{main1}. We define the function $h:[1, \, s_\mu] \rightarrow \R$ by
$$
h = \max _{t+1\in J} h_t,
$$
where
$$
h_{\mu-1}(s) = \frac{\langle \overline{r} \rangle}{d}\Bigl( 1+ \frac{d-\mu}{\langle \overline{q}\circ \overline{r} \rangle_{I(\mu+1, \, d)}}-\frac{d-\mu}{\langle \overline{p}\circ \overline{r} \rangle_{I(\mu+1, \, d)}}\Bigr)s,
$$
$$
h_t(s) = \frac{\langle \overline{r} \rangle}{d} \Bigl( 1+ \frac{d-t}{\langle \overline{q}\circ \overline{r} \rangle_{I(t+1, \, d)}}-\frac{d-t}{\langle \overline{p}\circ \overline{r} \rangle_{I(t+1, \, d)}}\Bigr)s +
$$
$$
+ \frac{1/p_{\sigma(t+1)}-1/q_{\sigma(t+1)}}{1/2-1/q_{\sigma(t+1)}}\Bigl(\frac 12 - s\cdot \frac{\langle \overline{r} \rangle}{d}\Bigl(\frac{t}{\langle 2\overline{r} \rangle _{I(1, \, t)}} + \frac{d-t}{\langle \overline{q} \circ \overline{r} \rangle _{I(t+1, \, d)}}\Bigr)\Bigr)
$$
for $\mu\le t \le \nu-1$; if $\nu<d$ and there is $j\in \{\nu+1, \, \dots, \, d\}$ such that $q_{\sigma(j)}>2$, we set
$$
h_{d-1}(s) = \frac{\langle \overline{r} \rangle}{d} \Bigl(1-\frac{\nu}{\langle 2\overline{r}\rangle_{I(1,\, \nu)}} - \frac{d-\nu}{\langle \overline{p} \circ \overline{r} \rangle _{I(\nu+1, \, d)}} \Bigr)s + \frac 12.
$$
Notice that the right-hand side of \eqref{d_est0h} is equal to $n^{-h_{\mu-1}(m/\log_2 n)}$, the right-hand side of \eqref{d_est1h} is equal to $n^{-h_{t-1}(m/\log_2 n)}$ ($\mu\le t\le \nu$), and the right-hand side of \eqref{d_est2h} is equal to $n^{-h_{d-1}(m/\log_2 n)}$ for $\nu<d$.

Now we apply Lemmas \ref{up_discr1} and \ref{low_discr1}, taking into account \eqref{emb_w_h}, \eqref{emb_cond}, \eqref{d_est0h}, and argue similarly as in \cite[proof of Theorem 1]{vas_int_sob}; we get that if $h$ has the unique minimum point on $[1, \, s_\mu]$ (we denote it by $s_*$), then
$$
n^{-h(s_*)}\underset{\overline{p},\overline{q},\overline{r},d}{\lesssim} d_n(W^{\overline{r}}_{\overline{p}, \overline{\alpha}}(\mathbb{T}^d), \, L_{\overline{q}}(\mathbb{T}^d)) 
\underset{\overline{r},d}{\lesssim}
d_n(H^{\overline{r}}_{\overline{p}}(\mathbb{T}^d), \, L_{\overline{q}}(\mathbb{T}^d)) 
\underset{\overline{p},\overline{q},\overline{r},d}{\lesssim} n^{-h(s_*)}.
$$

Let $\{\theta_t\}_{t\in J}$ be the numbers from Theorem \ref{main1}.

If $s_\mu=1$, then by the strict monotonicity of $\{s_t\}_{t\in J}$ we get that $T=J=\{\mu\}$; in the case $\mu<d$ we have $q_{\sigma(j)}=2$, $\mu+1\le j\le d$. Hence, $h=h_{\mu-1}$ and $h(1)=\theta_\mu$.

Let $s_\mu>1$. Notice that the following equalities hold: $h_{\mu-1}(s_\mu)=\theta_\mu$; for $\mu\le t\le \nu-1$ we have $h_t(s_t)=\theta_t$; for $\mu+1\le t\le \nu$ we have $h_{t-1}(s_t)=\theta_t$; if $\nu<d$ and $q_{\sigma(j)}>2$ for some $j\in \{\nu+1, \, \dots, \, d\}$, then $h_{d-1}(s_\nu) = \theta _{\nu}$, $h_{d-1}(1) = \theta_d$. In addition, from \eqref{upor} and \eqref{mu_nu_def} it follows that $h_t'\le h_{t-1}'$, $\mu\le t\le \nu$, and $h'_{d-1}\le h'_{\nu-1}$ in the case $s_\nu>1$. Hence, if $\mu\le t\le \nu-1$, then $h(s) = h_t(s)$ for $s_{t+1}\le s\le s_t$; $h(s) = h_{d-1}(s)$ for $s_\nu>1$, $1\le s\le s_\nu$. Therefore, the minimum of $h$ is attained at the point $s_t$ for some $t\in J$, and $\min _{[1,\, s_\mu]}h=\theta_t$. This completes the proof.

\begin{Rem}
If the condition \eqref{emb_cond} in Theorem {\rm \ref{main1}} {\rm (}respectively, \eqref{theta_posit} in Theorem {\rm \ref{main2})} fails, we similarly obtain that
$$
d_n(W^{\overline{r}}_{\overline{p}, \overline{\alpha}}(\mathbb{T}^d), \, L_{\overline{q}}(\mathbb{T}^d))\underset{\overline{p},\overline{q},\overline{r},d}{\gtrsim} 1, \quad d_n(H^{\overline{r}}_{\overline{p}}(\mathbb{T}^d), \, L_{\overline{q}}(\mathbb{T}^d))\underset{\overline{p},\overline{q},\overline{r},d}{\gtrsim} 1;
$$
i.e., the embedding is not compact.
\end{Rem}

It was mentioned above that in \cite[Chapter III, Section 15.2]{besov_iljin_nik}, \cite[Chapter \S 2]{itogi_nt}, \cite{galeev_emb, galeev1, galeev2, galeev85, galeev87, galeev4} the Sobolev classes were defined in a different way. Each distribution $f\in {\cal S}'(\mathbb{T}^d)$ has the Fourier series: $f = \sum \limits _{\overline{k}\in \Z^d} c_{\overline{k}}(f) e^{i(\overline{k}, \, \cdot)}$; it converges in the topology of ${\cal S}'(\mathbb{T}^d)$; here $(\cdot, \, \cdot)$ is the standard inner product on $\R^d$; the test-functions are infinitely smooth functions on $\mathbb{T}^d$. We set $$\mathaccent'27 \Z^d = \{(k_1, \, k_2, \, \dots, \, k_d)\in \Z^d:\; k_1k_2\dots k_d\ne 0\},$$
$$
\mathaccent'27{\cal S}'(\mathbb{T}^d) = \Bigl\{ f\in {\cal S}'(\mathbb{T}^d):\; f=\sum \limits _{\overline{k}\in \mathaccent'27\Z^d} c_{\overline{k}}(f) e^{i(\overline{k}, \, \cdot)}\Bigr\}.
$$
Let $r_j>0$, $1\le j\le d$. The partial Weyl derivative of order $r_j$ with respect to $x_j$ of the distribution $f\in \mathaccent'27{\cal S}'(\mathbb{T}^d)$ is defined by
$$
\partial_j^{r_j}f:=\frac{\partial^{r_j}f}{\partial x_j^{r_j}} := \sum \limits _{\overline{k}\in \mathaccent'27\Z^d} c_{\overline{k}}(f) (ik_j)^{r_j} e^{i(\overline{k}, \, \cdot)},
$$
where $(ik_j)^{r_j}=|k_j|^{r_j}e^{{\rm sgn}\,k_j\cdot i\pi r_j/2}$. 

Let $1\le p_j\le \infty$, $r_j>0$, $1\le j\le d$, $\overline{p}=(p_1, \, \dots, \, p_d)$, $\overline{r} = (r_1, \, \dots, \, r_d)$. We set
$$
\tilde{W}^{\overline{r}}_{\overline{p}}(\mathbb{T}^d) = \{f\in \mathaccent'27{\cal S}'(\mathbb{T}^d):\; \|\partial _j^{r_j}f\|_{L_{\overline{p}}(\mathbb{T}^d)}\le 1, \; 1\le j\le d\}.
$$

It turns out that 
\begin{align}
\label{twrp}
\tilde{W}^{\overline{r}}_{\overline{p}}(\mathbb{T}^d) \subset W^{\overline{r}}_{\overline{p},\overline{r}}(\mathbb{T}^d). 
\end{align}
In order to check this, it suffices to prove the equality $I_{j,r_j} \partial_j^{r_j}f = f$, $j=1, \, \dots, \, d$, $f\in \tilde{W}^{\overline{r}}_{\overline{p}}(\mathbb{T}^d)$, where
$$
I_{j,r_j}\varphi(x_1, \, \dots, \, x_d) = \frac{1}{2\pi} \int \limits _{\mathbb{T}} \varphi(x_1, \, \dots, \, x_{j-1}, \, y, \, x_{j+1}, \, \dots, \, x_d) F_{r_j}(x_j-y, \, r_j)\, dy
$$
(see \eqref{oooo}). For trigonometric polynomials it can be checked directly. For $f\in \tilde{W}^{\overline{r}}_{\overline{p}}(\mathbb{T}^d)$ the equality can be proved by passing to the limit; here we use the following facts: 1) the operator $I_{j,r_j}$ is continuous on $L_{\overline{p}}(\mathbb{T}^d)$; 2) $V_{(N, \, \dots, \, N)}f \underset{N\to \infty}{\to} f$ in $L_{\overline{p}}(\mathbb{T}^d)$ and in $\mathaccent'27{\cal S}'(\mathbb{T}^d)$; 3) $\partial _j^{r_j}(V_{\overline{N}}f) = V_{\overline{N}}(\partial _j^{r_j}f)$.

From \eqref{twrp} it follows that $d_n(\tilde{W}^{\overline{r}}_{\overline{p}}(\mathbb{T}^d), \, L_{\overline{q}}(\mathbb{T}^d))\le d_n(W^{\overline{r}}_{\overline{p},\overline{r}}(\mathbb{T}^d), \, L_{\overline{q}}(\mathbb{T}^d))$; this together with Theorems \ref{main1} and \ref{main2} yields the upper estimates. The estimate from below has the same order; the proof is similar to that for $W^{\overline{r}}_{\overline{p},\overline{\alpha}}(\mathbb{T}^d)$; here in the definition of $\tilde M_{\overline{p},\overline{r},m}$ in \eqref{til_m_prm} we take, instead of ${\cal T}(\lfloor 2^{\overline{\beta} m}\rfloor,d)$, the polynomials with harmonics in $\prod_{j=1}^d\{1, \, 2, \, 3, \, \dots, \, 2\lfloor 2^{m\beta_jr_j}\rfloor +1\}$.

\vskip 0.3cm

In conclusion, the author expresses her sincere gratitude to V. N. Temlyakov and
G. A. Akishev for useful discussion and references.

\begin{Biblio}

\bibitem{akishev} G. A. Akishev, ``Estimates for Kolmogorov widths of the Nikol'skii--Besov--Amanov classes in the Lorentz space'', {\it Proc. Steklov Inst. Math. (Suppl.)}, {\bf 296}, suppl. 1 (2017), 1--12.

\bibitem{akishev1} G. A. Akishev, ``On estimates of the order of the best $M$-term approximations of functions of several variables in the anisotropic Lorentz – Zygmund space'', {\it Izv. Saratov Univ. Mathematics. Mechanics. Informatics}, {\bf 23}:2 (2023),  142--156 (in Russian).

\bibitem{akishev2} G. A. Akishev, ``On estimates for orders of best M-term approximations of multivariate functions in anisotropic Lorentz--Karamata spaces'', {\it Ufa Math. J.}, {\bf 15}:1 (2023), 1--20.

\bibitem{akishev3} G. Akishev, ``Estimates for the Kolmogorov width of classes of small smoothness in Lorentz space''. XXIV International Conference ``Mathematics. Economics. Education''. IX International symposium ``Fourier series and their applications''. International Conference on stochastic methods. Materials. Rostov-na-Donu, 2016. P. 99 (in Russian).

\bibitem{alimov_tsarkov} A.R. Alimov, I.G. Tsarkov, {\it Geometric Approximation Theory.} Springer Monographs in Mathematics, 2021. 508 pp.

\bibitem{bed_pan} A. Benedek, R. Panzone, ``The space $L^{p}$, with mixed norm'', {\it Duke Mathematical Journal}, {\bf 28}:3 (1961), 301--324.

\bibitem{besov_iljin_nik} O. V. Besov, V. P. Il'in, S. M. Nikol'skii, {\it Integral representations of functions and imbedding theorems}, Nauka, Moscow, 1975 (Russian). [V. I, II, V. H. Winston \& Sons, Washington, D.C.; Halsted Press, New York--Toronto, Ont.--London, 1978, 1979.]

\bibitem{besov_littlewood} O. V. Besov, ``The Littlewood–Paley theorem for a mixed norm'', {\it Proc. Steklov Inst. Math.}, {\bf 170} (1987), 33--38.

\bibitem{dir_ull} S. Dirksen, T. Ullrich, ``Gelfand numbers related to structured sparsity and Besov space embeddings with small mixed smoothness'', {\it J. Compl.}, {\bf 48} (2018), 69--102.

\bibitem{galeev_emb} E.M. Galeev, ``Approximation by Fourier sums of classes of functions with several bounded derivatives'', {\it Math. Notes}, {\bf 23}:2 (1978),  109--117.

\bibitem{galeev1} E.M.~Galeev, ``The Kolmogorov diameter of the intersection of classes of periodic
functions and of finite-dimensional sets'', {\it Math. Notes},
{\bf 29}:5 (1981), 382--388.

\bibitem{galeev2} E.M. Galeev,  ``Kolmogorov widths of classes of periodic functions of one and several variables'', {\it Math. USSR-Izv.},  {\bf 36}:2 (1991),  435--448.

\bibitem{galeev85} E.M. Galeev, ``Kolmogorov widths in the space $\widetilde{L}_q$ of the classes $\widetilde{W}_p^{\overline{\alpha}}$ and $\widetilde{H}_p^{\overline{\alpha}}$ of periodic functions of several variables'', {\it Math. USSR-Izv.}, {\bf 27}:2 (1986), 219--237.

\bibitem{galeev87} E.M. Galeev, ``Estimates for widths, in the sense of Kolmogorov, of classes of periodic functions of several variables with small-order smoothness'',
{\it Vestnik Moskov. Univ. Ser. I Mat. Mekh.} no. 1 (1987), 26--30 (in Russian).

\bibitem{galeev4} E.M. Galeev,  ``Widths of functional classes and finite-dimensional sets'', {\it Vladikavkaz. Mat. Zh.}, {\bf 13}:2 (2011), 3--14.

\bibitem{galeev5} E.M. Galeev, ``Kolmogorov $n$-width of some finite-dimensional sets in a mixed measure'', {\it Math. Notes}, {\bf 58}:1 (1995),  774--778.

\bibitem{gal_pan} A. R. Galmarino, R. Panzone, ``$L^p$-Spaces with Mixed Norm, for $p$ a Sequence'', {\it J. Math. An. Appl.}, {\bf 10} (1965), 494--518.

\bibitem{garn_glus} A.Yu. Garnaev and E.D. Gluskin, ``On widths of the Euclidean ball'', {\it Dokl.Akad. Nauk SSSR}, {bf 277}:5 (1984), 1048--1052 [Sov. Math. Dokl. 30 (1984), 200--204]

 \bibitem{gluskin1} E.D. Gluskin, ``On some finite-dimensional problems of the theory of diameters'', {\it Vestn. Leningr. Univ.}, {\bf 13}:3 (1981), 5--10 (in Russian).

\bibitem{bib_gluskin} E.D. Gluskin, ``Norms of random matrices and diameters
of finite-dimensional sets'', {\it Math. USSR-Sb.}, {\bf 48}:1
(1984), 173--182.

\bibitem{schatten1} A. Hinrichs, J. Prochno, J. Vyb\'{\i}ral, ``Gelfand numbers of embeddings of Schatten classes'', {\it Math. Ann.}, {\bf 380} (2021), 1563--1593.

\bibitem{hinr_mic} A. Hinrichs, C. Michels, ``Gelfand Numbers of Identity Operators Between Symmetric Sequence Spaces'', {\it Positivity}, {\bf 10}:1 (2006), 111--133.

\bibitem{bib_ismag} R.S. Ismagilov, ``Diameters of sets in normed linear spaces and the approximation of functions by trigonometric polynomials'',
{\it Russian Math. Surveys}, {\bf 29}:3 (1974), 169--186.

\bibitem{izaak1} A.D. Izaak, ``Kolmogorov widths in finite-dimensional spaces with mixed norms'', {\it Math. Notes}, {\bf 55}:1 (1994), 30--36.

\bibitem{izaak2} A.D. Izaak, ``Widths of H\"{o}lder--Nikol'skij classes and finite-dimensional subsets in spaces with mixed norm'', {\it Math. Notes}, {\bf 59}:3 (1996), 328--330.

\bibitem{kashin_oct} B.S. Kashin, ``The diameters of octahedra'', {\it Usp. Mat. Nauk} {\bf 30}:4 (1975), 251--252 (in Russian).

\bibitem{bib_kashin} B.S. Kashin, ``The widths of certain finite-dimensional
sets and classes of smooth functions'', {\it Math. USSR-Izv.},
{\bf 11}:2 (1977), 317--333.

\bibitem{kashin_sma} B. S. Kashin, ``Widths of Sobolev classes of small-order smoothness'', Moscow Univ. Math. Bull., {\bf 36}:5 (1981), 62--66.

\bibitem{mal_rjut} Yu.V. Malykhin, K. S. Ryutin, ``The Product of Octahedra is Badly Approximated in the $l_{2,1}$-Metric'', {\it Math. Notes}, {\bf 101}:1 (2017), 94--99.

\bibitem{nikolski_sm} S. M. Nikol'skii, {\it Priblizhenie funktsiĭ mnogikh peremennykh i teoremy vlozheniya} (Russian) [Approximation of functions of several variables and imbedding theorems]. Izdat. ``Nauka'', Moscow, 1969.

\bibitem{pietsch1} A. Pietsch, ``$s$-numbers of operators in Banach space'', {\it Studia Math.},
{\bf 51} (1974), 201--223.

\bibitem{kniga_pinkusa} A. Pinkus, {\it $n$-widths
in approximation theory.} Berlin: Springer, 1985.

\bibitem{schatten2} J. Prochno, M. Strzelecki, ``Approximation, Gelfand, and Kolmogorov numbers of
Schatten class embeddings'', {\it J. Appr. Theory}, {\bf 277} (2022), article 105736.

\bibitem{stesin} M.I. Stesin, ``Aleksandrov diameters of finite-dimensional sets
and of classes of smooth functions'', {\it Dokl. Akad. Nauk SSSR},
{\bf 220}:6 (1975), 1278--1281 [Soviet Math. Dokl.].

\bibitem{teml1} V. N. Temlyakov, ``On the approximation of periodic functions of several variables
with bounded mixed difference'', {\it Soviet Math. Dokl.}, {\bf 22} (1980), 131--135.

\bibitem{teml2} V. N. Temlyakov, ``Widths of some classes of functions of several variables'', {\it Soviet Math Dokl.}, {\bf 26} (1982), 619--622.

\bibitem{teml3} V. N. Temlyakov, ``Approximation of periodic functions of several variables by
trigonometric polynomials, and widths of some classes of functions'', {\it Math. USSR-Izv.}, {\bf 27}:2 (1986), 285--322.

\bibitem{teml4} V. N. Temlyakov, ``Approximations of functions with bounded mixed derivative'', {\it Proc. Steklov Inst. Math.}, {\bf 178} (1989), 1--121.

\bibitem{teml_book} V. Temlyakov, {\it Multivariate approximation}. Cambridge Univ. Press, 2018. 534 pp.

\bibitem{teml5} V. N. Temlyakov, ``Approximation of functions with a bounded mixed difference by trigonometric polynomials, and the widths of some classes of functions'', {\it Math. USSR-Izv.}, {\bf 20}:1 (1983), 173--187.

\bibitem{vmt60} V. M. Tikhomirov, ``Diameters of sets in function spaces and the theory of best approximations'', {\it Russian Math. Surveys}, {\bf 15}:3 (1960), 75--111.

\bibitem{itogi_nt} V.M. Tikhomirov, ``Theory of approximations''. In: {\it Current problems in
mathematics. Fundamental directions.} vol. 14. ({\it Itogi Nauki i
Tekhniki}) (Akad. Nauk SSSR, Vsesoyuz. Inst. Nauchn. i Tekhn.
Inform., Moscow, 1987), pp. 103--260 [Encycl. Math. Sci. vol. 14,
1990, pp. 93--243].

\bibitem{nvtp} V.M. Tikhomirov, {\it Some questions in approximation theory}, Izdat. Moskov. Univ., Moscow, 1976.

\bibitem{unin} A.P. Uninskii. ``Inequalities in the mixed norm for the trigonometric polynomials
and entire functions of finite degree''. In Mater. Vsesoyuzn. Simp. Teor. Vlozhen., Baku. 1966.

\bibitem{vas_besov} A.A. Vasil'eva, ``Kolmogorov and linear widths of the weighted Besov classes with singularity at the origin'', {\it J. Approx. Theory}, {\bf 167} (2013), 1--41.

\bibitem{vas_mix2} A.A. Vasil'eva, ``Estimates for the Kolmogorov widths of an intersection of two balls in a mixed norm'', {\it Mat. Sbornik}, {\bf 215}:1, 82--98 (in Russian; English transl. in {\it Sb. Math.}: to appear).

\bibitem{vas_mix_sev} A. A. Vasil'eva, ``Kolmogorov widths of an intersection of a family of balls in a mixed norm'', {\it J. Appr. Theory}, {\bf 301} (2024), article 106046.

\bibitem{vas_int_sob} A. A. Vasil'eva, ``Kolmogorov widths of an intersection of a finite family of Sobolev classes'', {\it  Izv. Math.}, {\bf 88}:1 (2024), 18--42.

\bibitem{zigmund} A. Zygmund, {\it Trigonometric Series}. Cambridge University Press, 1959.
\end{Biblio}
\end{document}